\def\({\left (}
\def\){\right )}
\def\<{\left\langle}
\def\>{\right\rangle}
\newtheorem{thm}{Theorem}[section]
\newtheorem{cor}[thm]{Corollary}
\newtheorem{lem}[thm]{Lemma}
\newtheorem{rem}[thm]{Remark}
\newcommand{\norm}[1]{\left\Vert#1\right\Vert}
\newcommand{\abs}[1]{\left\vert#1\right\vert}
\newcommand{\set}[1]{\left\{#1\right\}}
\newcommand{\Real}{\mathbb R}
\newcommand{\bx}{\mathrm{x}}
\numberwithin{equation}{section}
\newcommand{\red}[1]{ {\color{red} #1} }
\begin{document}
\title{On the blow-up of Yang-Mills fields in dimension four}

\begin{abstract}
	In this paper, we study the blow-up of a sequence of Yang-Mills connection with bounded energy on a four manifold. We prove a set of equations relating the geometry of the bubble connection at the infinity with the geometry of the limit connection at the energy concentration point. These equations exclude certain scenarios from happening, for example, there is no sequence of Yang-Mills $SU(2)$ connections on $S^4$ converging to an ASD one-instanton while developing a SD one-instanton as a bubble. The proof involves the expansion of connection forms with respect to some Coulomb gauge on long cylinders.
\end{abstract}
\author{Hao Yin}

\address{Hao Yin,  School of Mathematical Sciences,
University of Science and Technology of China, Hefei, China}
\email{haoyin@ustc.edu.cn }
\thanks{The research of Hao Yin is supported by NSFC 11971451 and 2020YFA0713102.}
\maketitle

\section{Introduction}
Suppose that $M$ is a closed oriented four manifold with a Riemannian metric $g$ and $E$ is a smooth vector bundle with prescribed bundle metric. Given a metric-compatible connection $D$, the Yang-Mills functional is defined by
\begin{equation}
	\mathcal Y \mathcal M(D)= \int_M \abs{F_D}^2 dV_g.
	\label{eqn:ym}
\end{equation}
Critical points of the Yang-Mills functional are called Yang-Mills connections. Recall that self-dual(SD) and anti-self-dual(ASD) connections are minimizers and therefore are special Yang-Mills connections. 

Given a sequence of Yang-Mills connections $D_k$ with bounded energy, the compactness was studied by Uhlenbeck in \cite{uhlenbeck1982connections}. For simplicity, we assume that $\bx_0\in M$ is the only energy concentration point and that there is only one bubble. That is, (by passing to subsequence if necessary) $D_k$ up to gauge transformation converges locally smoothly to another Yang-Mills connection $D_\infty$ over $M\setminus \set{\bx_0}$. By the removable singularity theorem \cite{uhlenbeck1982removable}, $D_\infty$ is a smooth Yang-Mills connection of another bundle $E_\infty$ over $M$. Meanwhile, there is a sequence $\lambda_k\to 0$ such that after scaling by $\lambda_k$, the sequence converges (up to gauge transformation) to a connection $D_b$ of the trivial bundle over $\Real^4$. By the removable singularity theorem again, we may assume that $D_b$ is a Yang-Mills connection of the bundle $E_b$ over $S^4=\Real^4\cup \set{\infty}$. 
It is natural to ask:

{\bf Question.} Is there any relation between the bubble connection $D_b$ and the limit connection $D_\infty$?

The following question is closely related and more precise.

{\bf Question.} Given any two Yang-Mills connections $D_1$ and $D_2$ over $S^4$, can we construct a sequence of Yang-Mills connections that converge to $D_1$ away from an energy concentration point and after scaling, develop a limit bubble $D_2$ on $\Real^4$ (which is identified with $S^4$)?

For a particular type of Yang-Mills connections, namely, the anti-self-dual(ASD), or the self-dual(SD) connections, Taubes \cite{taubes1982self,taubes1984self} developed a gluing method that gives an affirmative answer to the second question if both $D_1$ and $D_2$ are ASD, or SD. Indeed, we can even introduce an extra parameter (an isomorphism between fibers) and prescribe the direction along which the two connections are glued(see Chapter 7.2 of \cite{donaldson1990}).

The main result of this paper is the claim that the curvature tensor $F_{D_\infty}$ at $\bx_0$ and the curvature tensor $F_{D_b}$ at $\infty$ should satisfy some restrictions that will be made clear in the statement of Theorem \ref{thm:main}. As a consequence, we see the answer to the second question above is negative in general. Hence, there does exist some relation between $D_b$ and $D_\infty$.

The idea that these curvature tensors can be compared suggests that there is a natural identification of the two vector spaces in which they reside, i.e.
\[
\left[\Lambda^2(M)\otimes {\rm Hom}(E_\infty)\right]_{\bx_0} \quad \text{and} \quad \left[\Lambda^2(S^4)\otimes {\rm Hom}(E_b)\right]_\infty.
\]
Here $\Lambda^2(M)$ and $\Lambda^2(S^4)$ are the spaces of two forms. While the process of scaling and taking limit identifies $T_{\bx_0} M$ naturally with $\Real^4$, or $T_\infty S^4$, the identification of $(E_\infty)_{\bx_0}$ and  $(E_b)_{\infty}$ is less obvious.

It was proved in \cite{rade1993decay} and \cite{groisser1997sharp} that for small $\delta$ and $x\in B_\delta\setminus B_{\lambda \delta^{-1}}$, we have
\begin{equation}
	\abs{F_{D_k}}(x)\leq C \left( 1 + \frac{r^2}{\abs{x}^4} \right).
	\label{eqn:rade}
\end{equation}
Here we have taken some fixed coordinate system $x$ around $\bx_0$ and $B_r$ stands for the ball of radius $r$ centered at the origin.  
In terms of the cylinder coordinates $(t,\omega)= (\log \abs{x},\frac{x}{\abs{x}})$, \eqref{eqn:rade} implies that the curvature $F_{D_k}$ measured in the conformal metric $\abs{x}^{-2} g$ satisfies  
\begin{equation}
	\abs{F_{D_k}}(t,\omega)\leq C (e^{2t} + \lambda_k^2 e^{-2t}) \quad \forall t\in [\log \lambda_k -\log \delta, \log\delta].	
	\label{eqn:radecylinder}
\end{equation}

The identification between $(E_\infty)_{\bx_0}$ and $(E_b)_\infty$ follows from \eqref{eqn:rade} and some routine discussion about the compactness of Yang-Mills fields, which we summarized in the following theorem.
\begin{thm}
	\label{thm:known} Let $(M,g)$, $E$, $D_k$ be as above. Then we have

	(1) For some small $\delta>0$ and $k$ sufficiently large, there is a trivialization $e_k$ of $E$ over $\Omega_k$ such that 
	\begin{equation}
		\abs{A_k}(t,\omega) \leq C (e^{2t}+ \lambda_k^2 e^{-2t}),
		\label{eqn:aistart}
	\end{equation}
	where $A_k$ is the connection form of $D_k$ with respect to $e_k$.

	(2) A choice of such trivializations (for all $k$) leads to a trivialization $e_\infty$ of $E_\infty$ over $B_\delta$ and a trivilization $e_b$ of $E_b$ over $\Real^4\setminus B_{\delta^{-1}}$.

	(3) Different choices of $e_k$ in (1), as long as it satisfies \eqref{eqn:aistart}, will result in different $e_\infty$ and $e_b$. However, the linear map that identifies $(e_\infty)_{\bx_0}$ with $(e_b)_\infty$ is independent of the choices of $e_k$.
\end{thm}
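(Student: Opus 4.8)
The plan is to prove (1) by an Uhlenbeck patching argument on the long cylinder, (2) by Uhlenbeck compactness together with a removable-singularity-type extension of the limiting frame across the puncture, and (3) by showing that \eqref{eqn:aistart} constrains the gauge ambiguity along the neck so tightly that its effect at the two ends is forced to coincide. For (1), pass to the conformal metric $\bar g=\abs{x}^{-2}g$, with respect to which $\Omega_k=B_\delta\setminus B_{\lambda_k\delta^{-1}}$ becomes a long cylinder $[\log\lambda_k-\log\delta,\log\delta]\times S^3$ on which $\bar g$ differs from the product metric by $O(e^{2t})$; since four-dimensional Yang--Mills is conformally invariant, $D_k$ is still Yang--Mills there. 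By \eqref{eqn:radecylinder} the curvature on each unit sub-cylinder $[a,a+1]\times S^3$ has $L^2$ norm $\lesssim e^{2a}+\lambda_k^2e^{-2a}=:\eps(a)$, which is uniformly small, so Uhlenbeck's gauge theorem gives a Coulomb gauge on a neighbourhood of each piece in which the connection form is bounded in $W^{1,2}$, hence --- bootstrapping with the Yang--Mills equation --- in every $C^m$, by $C_m\eps(a)$. On the overlap of two consecutive pieces the transition function $\sigma$ satisfies $\abs{d\sigma}\lesssim\eps(a)$, so it is $O(\eps(a))$-close to a constant; absorbing these constants successively (there is no monodromy obstruction since $[\log\lambda_k-\log\delta,\log\delta]\times S^3$ is simply connected) yields a single trivialization $e_k$ in which $A_k$ obeys \eqref{eqn:aistart}.

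For (2): on a fixed sub-annulus near $\partial B_\delta$ the estimate \eqref{eqn:aistart} together with elliptic regularity for Yang--Mills connections makes $A_k$ bounded in every $C^m$, so along a subsequence $A_k$ converges; pushing the frames $e_k$ forward by the bundle isomorphisms $\phi_k$ realizing $D_k\to D_\infty$ on $M\setminus\set{\bx_0}$ and diagonalizing over an exhaustion of $B_\delta\setminus\set{\bx_0}$ produces a limiting frame $e_\infty$ of $E_\infty$ over $B_\delta\setminus\set{\bx_0}$ whose connection form $A_\infty$ inherits $\abs{A_\infty}_{\bar g}\le Ce^{2t}$, i.e.\ $\abs{A_\infty}_g(x)\le C\abs{x}$. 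Writing $e_\infty=f\cdot\rho$ in a smooth local frame $f$ of $E_\infty$ near $\bx_0$, the map $\rho$ has $\abs{\rho^{-1}d\rho}\le\abs{A_\infty^{e_\infty}}+\abs{A^f}$ bounded near $\bx_0$, so it extends continuously (in fact as a $C^1$ frame) over $\bx_0$; this defines $e_\infty$ over $B_\delta$ and the frame $(e_\infty)_{\bx_0}$. The bubble side is identical after rescaling by $\lambda_k$ and inverting at $\infty$, the term $\lambda_k^2e^{-2t}$ now playing the role $e^{2t}$ plays at the outer end, and yields $e_b$ over $\Real^4\setminus B_{\delta^{-1}}$ with the frame $(e_b)_\infty$. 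The decay in \eqref{eqn:aistart} enters essentially here: it is exactly what prevents the limiting frame from winding as one approaches the puncture.

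For (3): write $e_k'=e_kh_k$ for two gauges satisfying \eqref{eqn:aistart}. Then $h_k^{-1}dh_k=A_k'-h_k^{-1}A_kh_k$, so $\abs{dh_k}_{\bar g}(t,\omega)\le C(e^{2t}+\lambda_k^2e^{-2t})$ on $\Omega_k$, and integrating along a path from the outer to the inner boundary sphere shows the oscillation of $h_k$ over $\Omega_k$ is $\le C\delta^2$, uniformly in $k$. Passing to a common subsequence, $h_k\to h_\infty=e_\infty^{-1}e_\infty'$ on $B_\delta\setminus\set{\bx_0}$, which extends over $\bx_0$ by (2), and $h_\infty(\bx_0)$ is exactly the frame change relating $(e_\infty)_{\bx_0}$ to $(e_\infty')_{\bx_0}$; likewise the rescaled $h_k$ converge to $h_b$, with $h_b(\infty)$ the frame change at $\infty$. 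The identification $(E_\infty)_{\bx_0}\to(E_b)_\infty$ is unchanged under $e_k\mapsto e_k'$ precisely when $h_\infty(\bx_0)=h_b(\infty)$, so this equality is all that remains. But $h_\infty(\bx_0)$ is read off from the values of $h_k$ near the outer end of $\Omega_k$ and $h_b(\infty)$ from its values near the inner end, and any two such values differ by at most the oscillation $C\delta^2$; hence $\mathrm{dist}_{SU(2)}\!\big(h_\infty(\bx_0),h_b(\infty)\big)\le C\delta^2$. Since $e_k,e_k'$ still satisfy \eqref{eqn:aistart} on the nested annuli $B_{\delta'}\setminus B_{\lambda_k\delta'^{-1}}$ for every $\delta'\le\delta$, while $h_\infty(\bx_0)$ and $h_b(\infty)$ do not depend on $\delta'$, letting $\delta'\to0$ forces $h_\infty(\bx_0)=h_b(\infty)$. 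This last step --- recognizing that \eqref{eqn:aistart} rigidifies the gauge ambiguity along the entire neck so that its effects at the two ends agree up to $O(\delta^2)$, and then shrinking $\delta$ to upgrade this to an exact equality --- is the main obstacle; parts (1) and (2), by contrast, are essentially standard.
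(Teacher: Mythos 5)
Your proposal follows the paper's own route for all three parts: the Uhlenbeck patching of Coulomb gauges on unit sub-cylinders for (1) (which the paper delegates to \cite{uhlenbeck1982removable,uhlenbeck1982connections}), the removable-singularity extension of the limiting frame for (2), and the decay estimate on $ds_k$ forcing $s_\infty(\bx_0)=s_b(\infty)$ for (3). The $\delta'\to 0$ limit you use to close (3) is fine, although one can argue a bit more directly by noting that $\int \abs{ds_k}\,dt$ over each half of the neck is $O(e^{2t})$ resp.\ $O(\lambda_k^2 e^{-2t})$, so both $s_\infty(\bx_0)$ and $s_b(\infty)$ must coincide with the common subsequential limit of $s_k$ at the central slice $\Omega_C$, with no need to shrink $\delta$.
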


\begin{rem}
	For Theorem \ref{thm:known}, we do not need the sharp decay estimate in \eqref{eqn:rade}. 
\end{rem}

Given the above theorem, if we may compute $F_{D_\infty}$ and $F_{D_b}$ in the local frame $e_\infty$ and $e_b$ respectively (as given in (2) above), it makes sense to compare directly their components. Normally, when we fix a frame of the bundle and a local coordinate system, the curvature $F$ is written as
\[
	F= \frac{1}{2}\sum_{i,j=1}^4 F_{ij} dx_i\wedge dx_j
\]
where $F_{ij}$'s are matrices and $F_{ij}=-F_{ji}$. For our purpose, we set self-dual(SD) forms
\begin{equation}
	\begin{split}
	\Phi_{+,1}&= dx_1\wedge dx_2 + dx_3\wedge dx_4 \\
	\Phi_{+,2}&= dx_1\wedge dx_3 - dx_2\wedge dx_4 \\
	\Phi_{+,3}&= dx_1\wedge dx_4 + dx_2\wedge dx_3 
	\end{split}
	\label{eqn:Phiplus}
\end{equation}
and anti-self-dual(ASD) forms 
\begin{equation}
	\begin{split}
	\Phi_{-,1}&= dx_1\wedge dx_2 - dx_3\wedge dx_4 \\
	\Phi_{-,2}&= dx_1\wedge dx_3 + dx_2\wedge dx_4 \\
	\Phi_{-,3}&= dx_1\wedge dx_4 - dx_2\wedge dx_3.
	\end{split}
	\label{eqn:Phiminus}
\end{equation}
With the above basis of two forms, $F_{D_\infty}$ is
\begin{equation}
	F_{D_\infty}(0)= \sum_{i=1}^3 F^L_{\pm,i} \Phi_{\pm,i}
	\label{eqn:fd}
\end{equation}
for some constant matrices $F^L_{\pm,i}$.
\begin{rem}
	The superscript $L$ means left. In this paper, the left end of the neck cylinder is attached to the weak limit and the right end to the bubble. 
\end{rem}
For the bubble connection defined on $\Real^4 \setminus B_{\delta^{-1}}$, we use another coordinate system
\[
	(y_1,y_2,y_3,y_4)=\frac{1}{\abs{x}^2}(x_1,x_2,x_3,x_4)
\]
and $\Phi_{\pm,i}(y)$ denotes the two forms in \eqref{eqn:Phiplus} and \eqref{eqn:Phiminus} with $x_i$ replaced by $y_i$. Note that due to the problem of orientation, $\Phi_{+,i}(y)$ is ASD, and $\Phi_{-,i}(y)$ is SD. Similar to \eqref{eqn:fd}, the curvature of $F_b$ is
\begin{equation}
	F_b|_{y=0}=\sum_{i=1}^3 F^R_{\mp,i} \Phi_{\pm,i}(y)
	\label{eqn:fb}
\end{equation}
for some constant matrices $F^R_{\mp,i}$.

The main result of this paper is
\begin{thm}
	\label{thm:main}
	Let $D_k$ be a sequence of Yang-Mills connection as above. Assume that $g$ is conformally flat near the only blow-up point $\bx_0$. With the identification given in Thereom \ref{thm:known}, for the curvature of the weak limit $F_{D_\infty}$ in \eqref{eqn:fd} and the curvature of the bubble connection $F_b$ in \eqref{eqn:fb}, we have
	\begin{equation}
		\langle F^L_{\pm,i}, F^R_{\mp,j} \rangle - \langle F^L_{\pm,j}, F^R_{\mp,i} \rangle = 0	
		\label{eqn:main16}
	\end{equation}
	for any pair $(i,j)=(2,3)$ or $(3,1)$ or $(1,2)$
	and
	\begin{equation}
		\sum_{i=1}^3 \langle F^L_{+,i}, F^R_{-,i} \rangle + \langle F^L_{-,i}, F^R_{+,i} \rangle =0.
		\label{eqn:main7}
	\end{equation}
	Here the inner product of two matrices $A,B$ is given by ${\rm Tr}(A B^t)$.
\end{thm}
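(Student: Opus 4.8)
The guiding principle is that the quantities in \eqref{eqn:main16} and \eqref{eqn:main7} are the "flux" terms that appear when one integrates the Yang-Mills equation, or a Bochner-type identity, over the neck region $\Omega_k$ and lets $k\to\infty$. The plan is to work in the cylinder coordinates $(t,\omega)$, where the conformal invariance of the Yang-Mills functional in dimension four puts $D_k$ on a long cylinder $[\log\lambda_k-\log\delta,\log\delta]\times S^3$, and to exploit the decay \eqref{eqn:aistart} together with a Coulomb-gauge expansion of $A_k$ in spherical harmonics on $S^3$. The leading terms of this expansion at the two ends are governed precisely by the curvature components $F^L_{\pm,i}$ (at the left end, $t\to-\infty$ relative to $\delta$) and $F^R_{\mp,j}$ (at the right end), because \eqref{eqn:rade} says the curvature at the left end behaves like $e^{2t}$ — the lowest admissible rate — and is pinned down by $F_{D_\infty}(0)$ via \eqref{eqn:fd}, while the $\lambda_k^2 e^{-2t}$ term at the right end is pinned down by $F_b|_{y=0}$ via \eqref{eqn:fb}.

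Concretely, I would first set up the Coulomb gauge on the neck (this is where \cite{uhlenbeck1982connections} and the estimate \eqref{eqn:aistart} enter) and expand $A_k = \sum_\mu a_k^\mu(t)\,\phi_\mu(\omega)$ into eigensections of the relevant operator on $S^3$; the decay estimate forces the coefficient functions to be, to leading order, linear combinations of $e^{2t}$ and $e^{-2t}$ (the indicial roots $\pm 2$), with the $e^{2t}$ coefficients determined by $F^L$ and the $e^{-2t}$ coefficients by $\lambda_k^2 F^R$. Next, the Yang-Mills equation $d_{D_k}^* F_{D_k}=0$ restricted to the cylinder, written in this gauge, yields a second-order ODE system in $t$ for the coefficients; pairing this system with an appropriate test coefficient and integrating over the neck produces a boundary term at $t=\log\delta$ and one at $t=\log\lambda_k-\log\delta$. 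The cross terms between the $e^{2t}$-mode and the $e^{-2t}$-mode are exactly resonant (their product is $t$-independent), so they survive the integration and do not decay; all other terms either vanish in the limit or integrate to something controlled by higher-order (hence decaying) contributions. Identifying the surviving boundary pairing with the bilinear expressions $\langle F^L_{\pm,i},F^R_{\mp,j}\rangle$ and using the antisymmetry/symmetry built into the two-form basis \eqref{eqn:Phiplus}--\eqref{eqn:Phiminus} gives \eqref{eqn:main16} for $(i,j)$ a cyclic pair; summing the appropriate components and using that the total flux through a closed cylinder of the conserved current vanishes gives the trace relation \eqref{eqn:main7}.

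For \eqref{eqn:main7} specifically I expect the cleaner route is to integrate the pointwise Bochner/Weitzenb\"ock identity for $|F_{D_k}|^2$ — or, better, the divergence identity coming from contracting the stress-energy tensor of the Yang-Mills field with the conformal (dilation) vector field $\partial_t$ on the cylinder — over $\Omega_k$. This is the standard derivation of a Pohozaev-type identity: the bulk integrand is either zero (by the Yang-Mills equation plus conformal invariance in dimension four) or a curvature term one controls by \eqref{eqn:radecylinder}, and the identity collapses to the difference of two boundary integrals over copies of $S^3$, one near $\bx_0$ and one near $\infty$. Taking $k\to\infty$, the left boundary term converges to a quadratic expression in $F_{D_\infty}(0)$ paired against the leading bubble data and vice versa on the right; the mixed term is the only one that does not separate, and tracking its form gives \eqref{eqn:main7}. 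The assumption that $g$ is conformally flat near $\bx_0$ is used here to make the cylinder metric exactly the product metric (up to the conformal factor that Yang-Mills ignores), so that the indicial roots are exactly $\pm 2$ and there are no error terms from curvature of $g$ polluting the resonant computation.

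The main obstacle, I expect, is the passage from the crude decay bound \eqref{eqn:aistart} to an \emph{asymptotic expansion} with identifiable leading coefficients — i.e., showing that $a_k^\mu(t)$ really is $c_k^+ e^{2t}+c_k^- e^{-2t}+o(\cdot)$ uniformly on the neck with $c_k^\pm$ converging to the curvature data of $D_\infty$ and $D_b$. This requires a careful ODE analysis on the (expanding) interval with the nonlinear terms $[A_k,A_k]$ and $[A_k,\cdot]$ treated perturbatively via \eqref{eqn:aistart}, plus an argument that the Coulomb gauge is compatible at both ends with the frames $e_\infty$ and $e_b$ of Theorem \ref{thm:known} so that the limits of $c_k^\pm$ are the \emph{same} objects appearing in \eqref{eqn:fd} and \eqref{eqn:fb}. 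Once the expansion and the gauge-matching are in place, the resonance bookkeeping and the final contraction with the two-form basis are essentially linear algebra.
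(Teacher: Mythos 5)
Your overall strategy matches the paper's: pass to cylinder coordinates, put $D_k$ in a normalized Coulomb gauge on the neck, use the Yang--Mills equation to upgrade the decay bound \eqref{eqn:aistart} to a genuine asymptotic expansion of $A_k$ with identifiable $e^{2t}$ and $\lambda_k^2 e^{-2t}$ coefficients, and then harvest conserved flux identities on $\Omega_C$ as $k\to\infty$, with the Pohozaev/dilation identity giving \eqref{eqn:main7}. The one place your outline is too loose to reproduce the theorem is \eqref{eqn:main16}: ``pairing the second-order ODE system with an appropriate test coefficient'' is not specific enough, and as phrased it risks producing Wronskian identities that are linear in $A_k$ rather than the quadratic-in-$F$ pairings $\langle F^L_{\pm,i},F^R_{\mp,j}\rangle$. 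The mechanism the paper uses is the same one you already invoke for \eqref{eqn:main7}, applied systematically: contract the (trace-free, divergence-free) stress--energy tensor $S$ of \eqref{eqn:stress} with \emph{each} conformal Killing field of the round cylinder. On $\Real\times S^3$ these are the dilation $\partial_t$ and the six rotation generators $X_{\pm,i}$ of $SO(4)$; integrating $\mathrm{div}(S\#X)=0$ over a half-cylinder gives $\int_{\Omega_C}(S\#X\#\partial_t)\,d\omega=0$, and substituting the expansion $A_k\sim\sum_i(F^L_{\pm,i}e^{2t}+F^R_{\mp,i}e^{-2(t-\log\lambda_k)})\phi_{\pm,i}$ with the contraction tables for $\iota_{X_{\pm,i}}\mathcal P_{\pm,j}$, $\iota_{X_{\pm,i}}\mathcal Q_{\pm,j}$ yields exactly the six equations \eqref{eqn:main16} (one per rotation) and the seventh \eqref{eqn:main7} (from $\partial_t$). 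This also explains the conformal-flatness hypothesis, which you only gestured at: it is what guarantees these seven vector fields are genuinely conformal Killing for the relevant metric, so that $\mathrm{div}(S\#X)=0$ holds without lower-order curvature error terms. Finally, the ``main obstacle'' you correctly flag --- upgrading decay to an expansion whose leading coefficients coincide with the curvature data of $D_\infty$ and $D_b$ --- is resolved in the paper precisely by the extra normalization conditions \eqref{eqn:gg1}--\eqref{eqn:gg2} built into the Coulomb gauge (killing the indicial modes of rates $0$ and $\sqrt 3$) together with the linear estimate on the long cylinder; your write-up should make explicit that without those normalizations the harmonic $\omega_i$- and $\psi_i$-modes would pollute the resonant $e^{\pm 2t}$ computation.
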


\begin{rem}
	A first observation is that if both the bubble connection and the weak limit connection are ASD	(or SD), then the equations \eqref{eqn:main16} and \eqref{eqn:main7} hold trivially. In that sense, our theorem says nothing for the blow-up of instantons. This explains that why the gluing of ASD has no local obstructions, i.e. the gluing can happen anywhere on the manifold and the bubble can be attached at any direction.

	However, for general Yang-Mills connections, the above equations become a set of necessary conditions if the gluing method were to work.
\end{rem}

\begin{rem}
	A natural question is what happens if the equations do not hold. The author\cite{yin2021} exploited such a condition in a variational setting. In that sense, one may want to regard \eqref{eqn:main16} and \eqref{eqn:main7} as an infinitesimal balancing condition.
\end{rem}

The proof of this theorem consists of three parts. The first part, Section \ref{sec:gauge}, is a good choice of gauge for $D_k$ in $\Omega_k$. This gauge has the following three advantages:

(1) it is Coulomb gauge, i.e. $d^*A_k$=0 with $D_k=d+A_k$;

(2) $A_k$ satisfies some normalization conditions (see \eqref{eqn:gg1} and \eqref{eqn:gg2}). These equations help us to establish precise relations between the curvature and the derivatives of connection, which we use to prove \eqref{eqn:main16} and \eqref{eqn:main7};

(3) $A_k$ satisfies some weighted decay estimate of order smaller than $2$. 

The second part (Section \ref{sec:asymptotic}) of the proof is some uniform elliptic regularity over the long cylinder. By using the elliptic system satisfied by $A_k$, we improve the estimate of $A_k$ to an order larger than $2$. See \eqref{eqn:Ak} in Section \ref{sub:thelimit}.

The last part (Section \ref{sec:proof}) shows that the expansion coefficients of $A_k$ are restricted by some equations derived from the divergence free property of the stress-energy tensor
\begin{equation}
	\label{eqn:stress}
	S= \left( g^{ml}\langle F_{im}, F_{jl} \rangle -\frac{1}{4}\abs{F}^2 g_{ij}\right) dx_i\otimes dx_j.
\end{equation}
It is well known that as long as $D$ is Yang-Mills, $\mbox{div}(S)=0$. Moreover, the symmetric two tensor $S$ is trace free in dimension four. Hence, if $X$ is a conformal Killing field, then
\[
	\mbox{div} (S\# X) =0,
\]
which by the divergence theorem implies that
\begin{equation}
	\label{eqn:omegac}
	\int_{\Omega_C} (S\# X\# \partial_t) d\omega=0,
\end{equation}
where $\#$ means contraction, $\Omega_C$ is the center of the neck, i.e. $\set{\frac{1}{2}\log \lambda}\times S^3$ and $d\omega$ is the volume form of round $S^3$. This explains the assumption of being conformally flat in Theorem \ref{thm:main}. If otherwise, we may not have any conformal Killing fields.

If we plug the expanion of $A_k$ that is proved in the second part into \eqref{eqn:omegac} and take the limit $k\to \infty$, we get an equation relating the limit connections $D_\infty$ and $D_b$. We may take $X$ to be either any vector field that generates $SO(4)$, or the radial vector field that generates the scaling. Hence, we get a total of seven equations in Theorem \ref{thm:main}.

To conclude the introduction, we make the following observations. For $SU(2)$ connections, there is explicit formula for the curvature of ASD one-instanton (see Chapter 3.4 of \cite{donaldson1990})
\[
	F= 2 \left( \frac{1}{1+\abs{x}^2} \right)^2 \left( \Phi_{-,1} \mathbf i + \Phi_{-,2} \mathbf j + \Phi_{-,3} \mathbf k \right).
\]
Here $\mathbf i$, $\mathbf j$ and $\mathbf k$ forms an orthonormal basis of ${\mathfrak s\mathfrak u}(2)$. As a consequence, if $D_\infty $ is ASD one-instanon, then, regardless of the point, $(F^L_{-,i})_{i=1,2,3}$ is (up to a constant) an orthonormal basis of ${\mathfrak s\mathfrak u}(2)$ and $F^L_{+,i}=0$. Similarly, if the bubble connection $D_b$ is SD one-instanton, we know $(F^R_{+,i})_{i=1,2,3}$ is (up to a constant) an orthonormal basis of ${\mathfrak s\mathfrak u}(2)$ and $F^R_{-,i}=0$. 
In summary, the matrix
\[
	\langle F^L_{-,i}, F^R_{+,j} \rangle
\]
is (up to a constant) an orthogonal matrix. However, \eqref{eqn:main16} implies that it is symmetric and \eqref{eqn:main7} implies that it is traceless. Since $\dim {\mathfrak s\mathfrak u}(2)=3$, there is no such matrix. This proves the claim that there is no sequence of Yang-Mills $SU(2)$ connections on $S^4$ converging to an ASD one-instanton while developing an SD one-instanton as a bubble.

\section{Preliminaries}
In this section, we list a few basic results whose proofs are either known or simple. We also take this opportunity to fix some notations.

\subsection{The natural identification}
\label{sub:natural}
In this subsection, we give a proof of Theorem \ref{thm:known}. Essentially, the arguments below are part of the theory of compactness of Yang-Mills connections established by Uhlenbeck.

We first introduce some notations. Set
\begin{eqnarray*}
	\Omega_k&=&  [\log \lambda_k -\log \delta,\log \delta]\times S^3\\
\Omega_\infty&=& (-\infty,\log\delta] \times S^3 \\
	\Omega_b &=& [-\log \delta, \infty)\times S^3.
\end{eqnarray*}
We define $\eta_k: \Omega_k\to \Real$ by
\[
\eta_k(t):= e^t + e^{-(t-\log \lambda_k)}= e^t + \lambda_k e^{-t}.
\]
We use $\eta_k$ to measure the decay rate of certain quantities defined on $\Omega_k$. Notice that throughout the paper, we omit the subscript $k$ of $\eta_k$ for simplicity.

As in the introduction, $E|_{B_\delta\setminus B_{\lambda_k \delta^{-1}}}$ is regarded as a smooth bundle over $\Omega_k$ and \eqref{eqn:radecylinder} becomes
\begin{equation}
	\abs{F_{D_k}} \leq C \eta(t)^2.
	\label{eqn:newrade}
\end{equation}

Consider the trivial bundle over $\Omega_\infty$ and fix a global frame $e_{\infty}$. If we identify $\Omega_\infty$ with $B_\delta\setminus \set{0}$, this frame extends trivially to be a global trivialization of $E_\infty$ on $B_\delta$. Similarly, we consider a trivial bundle over $\Omega_b$ and fix a global frame $e_{b}$. It gives a trivialization of $E_b$ over $\Real^4\setminus B_{\delta^{-1}}$, that extends to a neighborhood of $\infty$.

\begin{lem}
	\label{lem:broken}
	For each $k$, there exists a trivialization $e_k$ of $E$ over $\Omega_k$ such that if $D_k=d+A_k$, then 
\begin{equation}
	\norm{A_k}_{C^{1,\alpha}( [t,t+1]\times S^3)} \leq C \eta(t)^{2}, \quad \forall t\in [\log \lambda_k -\log \delta, \log \delta-1].
	\label{eqn:aidecay}
\end{equation}
Here $\alpha$ is any fixed number in $(0,1)$.
\end{lem}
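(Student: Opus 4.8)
The plan is to produce the trivialization $e_k$ by a patching argument along the cylinder $\Omega_k$, using Uhlenbeck's local gauge-fixing theorem on unit-length sub-cylinders and then controlling the overlaps. Concretely, I would cover $\Omega_k$ by the slabs $I_j\times S^3$ with $I_j = [t_j, t_j+2]$, $t_j = \log\lambda_k - \log\delta + j$, so that consecutive slabs overlap in a slab of length $1$. On each $I_j\times S^3$ the curvature bound \eqref{eqn:newrade} gives $\norm{F_{D_k}}_{L^2(I_j\times S^3)}\le C\eta(t_j)^2 \le C$, which (after shrinking $\delta$, using that $\eta$ is uniformly bounded on $\Omega_k$) is below the threshold $\eps$ in Uhlenbeck's theorem. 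Hence there is a local Coulomb trivialization $\tilde e_j$ on $I_j\times S^3$ with connection form $\tilde A_j$ satisfying the scale-invariant estimate $\norm{\tilde A_j}_{W^{1,2}(I_j\times S^3)}\le C\norm{F_{D_k}}_{L^2(I_j\times S^3)}$, and by elliptic bootstrapping on the Coulomb equation $d^*\tilde A_j = 0$, $dA_j + A_j\wedge A_j = F$, the improved bound $\norm{\tilde A_j}_{C^{1,\alpha}(I_j'\times S^3)}\le C\norm{F_{D_k}}_{L^2(I_j\times S^3)}\le C\eta(t_j)^2$ on the slightly smaller slab $I_j' = [t_j + \tfrac12, t_j + \tfrac32]$. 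The point is that the constant $C$ here is uniform in $j$ and $k$: the geometry of each slab is a fixed cylinder $[0,2]\times S^3$, independent of $k$.

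**The next step** is to glue the local frames $\tilde e_j$ into a single frame $e_k$ on $\Omega_k$. On the overlap of $\tilde e_j$ and $\tilde e_{j+1}$ the transition function $h_j$ satisfies $\tilde A_{j+1} = h_j^{-1}\tilde A_j h_j + h_j^{-1}dh_j$, so $dh_j = h_j \tilde A_{j+1} - \tilde A_j h_j$ and $\abs{dh_j}\le C(\abs{\tilde A_j}+\abs{\tilde A_{j+1}})\le C\eta(t_j)^2$; thus $h_j$ is within $C\eta(t_j)^2$ in $C^{1,\alpha}$ of a constant gauge transformation $h_j^0$. One assembles $e_k$ by successively redefining $\tilde e_{j+1}$ via the constant transformations, i.e. replacing $\tilde e_{j+1}\mapsto \tilde e_{j+1}\cdot (h_j^0\cdots h_1^0)^{-1}$ or the like, so that after patching the remaining transition functions are $C^{1,\alpha}$-close to the identity with error $C\eta^2$. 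Choosing a partition of unity $\{\chi_j\}$ subordinate to $\{I_j\}$ with derivatives bounded independently of $j,k$, the glued connection form inherits the bound $\norm{A_k}_{C^{1,\alpha}([t,t+1]\times S^3)}\le C\eta(t)^2$, because in the region where $\chi_j$ is active we have $t\sim t_j$ and $\eta(t)\sim\eta(t_j)$ (the function $\eta$ varies by a bounded factor over a unit interval).

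**The main obstacle** I expect is precisely the uniformity of the gluing over a cylinder whose length $\to\infty$: one must ensure that the errors introduced at each of the $\sim\abs{\log\lambda_k}$ gluing steps do not accumulate. This is handled by the observation that $\eta(t)^2$ is summable-in-a-controlled-way only locally, but what we actually need is not a global smallness of $A_k$ but the pointwise-in-$t$ estimate \eqref{eqn:aidecay}; so the correct bookkeeping is to fix the constant gauge transformations once (making the transitions close to identity) and then note that the pointwise bound at a given slab depends only on the curvature in a bounded neighborhood of that slab, not on the whole cylinder. A secondary technical point is that $\eta$ should not be too small anywhere — but by construction $\eta(t) = e^t + \lambda_k e^{-t}\ge 2\sqrt{\lambda_k}$, and more usefully $\eta(t)\le 2\delta$ throughout $\Omega_k$ once $\delta$ is small, which is exactly what puts the curvature below Uhlenbeck's threshold on every slab. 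With these points in place the estimate \eqref{eqn:aidecay} follows, for any fixed $\alpha\in(0,1)$, from standard Schauder theory applied to the Coulomb system on the fixed-geometry slabs.
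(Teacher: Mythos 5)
Your proposal is correct and follows exactly the route the paper itself suggests for this omitted proof: obtain Uhlenbeck Coulomb gauges on length-two slabs using the pointwise curvature bound \eqref{eqn:newrade}, observe that the transition functions are close to constants by a comparable amount, normalize by constant gauge transformations, and interpolate via a partition of unity, with the key point that each gluing step is local in $t$ so the errors scale with $\eta(t)^2$ rather than accumulating along the cylinder. The only minor imprecision is that the $C^{1,\alpha}$ bound on the local Coulomb forms is best obtained from the Yang-Mills elliptic system (or from pointwise H\"older bounds on $F$ supplied by $\varepsilon$-regularity) rather than from the $L^2$ bound on $F$ alone, but this is standard and does not affect the argument.
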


The proof of this lemma is omitted. One can either smoothify the broken gauge constructed by Uhlenbeck in \cite{uhlenbeck1982removable} or mimic the construction therein to construct one by gluing good trivializations that exist on cylinders of length $2$. The existence of the later follows from Uhlenbeck's gauge fixing theorem in \cite{uhlenbeck1982connections}. 

Set
\begin{eqnarray*}
	L_K&=& [\log \delta-K,\log \delta]\times S^3 \\
	R_K&=& [-\log \delta, -\log \delta +K]\times S^3.
\end{eqnarray*}
The matrix valued one forms $A_k$, when restricted to $L_K$ (for any fixed $K$) converges to $A_\infty$ and gives the limit connection
\[
	D_\infty= d+ A_\infty
\]
w.r.t the frame $e_{\infty}$. Similarly, $A_k(t- \log \lambda_k)$ restricted to $R_K$ converges to $A_{b}$ and defines the bubble connection
\[
	D_b=d+A_b
\]
w.r.t the frame $e_{b}$.

By identifying $e_\infty(0)$ with $e_b(\infty)$, we identify $(E_\infty )_{\bx_0}$ and $(E_b)_{\infty }$. However, in the construction above, for each $D_k$, the choice of $e_{k}$ satisfying \eqref{eqn:aidecay} is not unique. It remains to check that this identification is independent of our choice of $e_{k}$. To see this, assume that there is another $e'_{k}$ in which $D_k=d+A'_k$ and
\begin{equation}
	\norm{A'_k}_{C^{1,\alpha}([t,t+1]\times S^3)}\leq C \eta(t)^{2}.
	\label{eqn:ai2decay}
\end{equation}
Denote by $A'_\infty$ and $A'_b$ the corresponding limit connection forms of $E_\infty$ and $E_b$ respectively.

There is a gauge transformation $s_k$ that maps $e_{k}$ to $e'_{k}$. By \eqref{eqn:aidecay} and \eqref{eqn:ai2decay}, we have
\begin{equation}
	\norm{ds_k}_{C^{1,\alpha}([t,t+1]\times S^3)} \leq C \eta(t)^{2}
	\label{eqn:dsdecay}
\end{equation}
for all $t\in [\log \lambda_k -\log \delta, \log \delta-1]$.
Since $s_k$ is always bounded, by taking subsequence, we have two limits, one is $s_\infty$ defined on $(-\infty,\log\delta]\times S^3$ satisfying
\[
	A'_\infty = s_\infty^{-1} ds_{\infty} + s_\infty^{-1} A_\infty s_\infty;
\]
the other one, $s_b$, defined on $[-\log \delta, \infty)\times S^3$, is the limit of $s_i(t-\log \lambda_k)$, satisfying
\[
	A'_b = s_b^{-1} ds_b + s_b^{-1} A_b s_b.
\]
Moreover, due to \eqref{eqn:dsdecay}, $s_\infty(0)=s_b(\infty)$. This implies that the identification of $(E_\infty)_{\bx_0}$ with $(E_b)_\infty$ is independent of the choice of $e_{k}$.

\subsection{Differential forms on $S^3$}
\label{sub:diff}
Suppose that $x_1,\dots,x_4$ are coordinates of $\Real^4$ and $S^3$ is the unit sphere. 
Let $\triangle_{S^3}$ be the Laplace operator for functions on $S^3$ with the round metric. It is well known that the first nonzero eigenvalue for $-\triangle_{S^3}$ is $3$ and the eigenspace is of dimension $4$ and is spanned by an orthogonal basis\footnote{We do not normalize it. The constant could be ugly and it is not useful.}
\begin{equation*}
	\omega_i:= x_i|_{S^3},\qquad i=1,2,3,4.
\end{equation*}
The next eigenvalue is $8$, it suffices for us to notice that $\sqrt{8}>2$. 

Let $\triangle_{h;S^3}$ be the Hodge Laplace and $\iota$ be the embedding of $S^3$ into $\Real^4$.
We now list the first few eigenvalues and eigenspaces of $\triangle_{h;S^3}$ acting on the space of one forms.
Set
\[
	\psi_i = \iota^*(dx_i).
\]
Then 
\[
	\triangle_{h;S^3} \psi_i = 3 \psi_i
\]
and they span the four dimensional eigenspace of $\triangle_{h;S^3}$ with (the smallest) eigenvalue $3$.

Set
\begin{eqnarray*}
	\phi_{-,1}&=&  \iota^*(x_1 dx_2 -x_2 dx_1 -x_3 dx_4 + x_4 dx_3)\\
	\phi_{-,2}&=&  \iota^*(x_1 dx_3 -x_3 dx_1 -x_4 dx_2 + x_2 dx_4)\\
	\phi_{-,3}&=&  \iota^*(x_1 dx_4 -x_4 dx_1 -x_2 dx_3 + x_3 dx_2)
\end{eqnarray*}
and
\begin{eqnarray*}
	\phi_{+,1}&=&  \iota^*(x_1 dx_2 -x_2 dx_1 +x_3 dx_4 - x_4 dx_3)\\
	\phi_{+,2}&=&  \iota^*(x_1 dx_3 -x_3 dx_1 +x_4 dx_2 - x_2 dx_4)\\
	\phi_{+,3}&=&  \iota^*(x_1 dx_4 -x_4 dx_1 +x_2 dx_3 - x_3 dx_2).
\end{eqnarray*}
They are coclosed and they span the six dimensional eigenspace of $\triangle_{h;S^3}$ with eigenvalue $4$. The next eigenvalue of $\triangle_{h;S^3}$ is again $8$. We refer to \cite{folland1989} for more information about eigenvalues and eigenspaces of the Hodge Laplace on the sphere.

Using the metric of $S^3$, these one forms have their duals, which we list below in due order
\begin{eqnarray*}
	X_{-,1}&=&  (x_1\partial_{x_2}-x_2\partial_{x_1}-x_3\partial_{x_4}+x_4\partial_{x_3})|_{S^3} \\
	X_{-,2}&=&  (x_1\partial_{x_3}-x_3\partial_{x_1}-x_4\partial_{x_2}+x_2\partial_{x_4})|_{S^3} \\
	X_{-,3}&=&  (x_1\partial_{x_4}-x_4\partial_{x_1}-x_2\partial_{x_3}+x_3\partial_{x_2})|_{S^3}
\end{eqnarray*}
and
\begin{eqnarray*}
	X_{+,1}&=&  (x_1\partial_{x_2}-x_2\partial_{x_1}+x_3\partial_{x_4}-x_4\partial_{x_3})|_{S^3} \\
	X_{+,2}&=&  (x_1\partial_{x_3}-x_3\partial_{x_1}+x_4\partial_{x_2}-x_2\partial_{x_4})|_{S^3} \\
	X_{+,3}&=&  (x_1\partial_{x_4}-x_4\partial_{x_1}+x_2\partial_{x_3}-x_3\partial_{x_2})|_{S^3}.
\end{eqnarray*}
We observe that $\set{X_{-,1},X_{-,2},X_{-,3}}$ is an orthonormal frame of the trivial bundle $TS^3$, while $\set{X_{+,1},X_{+,2},X_{+,3}}$ is another. Moreover, these six vector fields generate the Lie algebra of $SO(4)$ that acts on $S^3$ isometrically.

By elementary computation, there is a matrix $T\in SO(3)$ such that
\begin{equation}
	\label{eqn:xy}
	\begin{split}
		&\left( (X_{-,i},X_{+,j}) \right)_{i,j=1,2,3} \\
	&= r^{-2}
	\left(
	\begin{array}[]{ccc}
		x_1^2+x_2^2-x_3^2-x_4^2 & 2(x_1x_4+x_2x_3) & 2(x_2x_4-x_1x_3)\\
		2(x_2x_3-x_1x_4) & x_1^2+x_3^2-x_2^2-x_4^2 & 2(x_1x_2+x_3x_4) \\
		2(x_1x_3+x_2x_4) & 2(x_3x_4-x_1x_2) & x_1^2+x_4^2-x_2^2-x_3^2
	\end{array}
	\right)\\
	&:=-T^t.
	\end{split}
\end{equation}
In particular, we have
\[
	X_{-,i}=  (X_{-,i},X_{+,j})X_{+,j}= -(T)_{ji} X_{+,j}
\]
and
\[
	X_{+,i}=  (X_{-,j},X_{+,i})X_{-,j}= -(T)_{ij} X_{-,j}.
\]
Since $\phi_{\pm;i}$ are the dual basis of $X_{\pm;i}$, we also have
\begin{equation}
	\phi_{-;i}=-(T)_{ji}\phi_{+;j};\quad \phi_{+;i}=-(T)_{ij}\phi_{-;j}.
	\label{eqn:phipm}
\end{equation}
\begin{rem}
	By some abuse of notations, we also regard $\phi_{\pm,i}$ as one forms on the cylinder $[t_1,t_2]\times S^3$. Similarly, $X_{\pm,i}$ are vector fields on $[t_1,t_2]\times S^3$ that are perpendicular to $\partial_t$.
\end{rem}

Next, we consider two forms on cylinders. While $\phi_{\pm,i}$ are the main terms in our asymptotic analysis of connection form, $d(\phi_{\pm,i})$ terms in the curvature are essential for the proof of Theorem \ref{thm:main}.

For $i=1,2,3$, set
\begin{equation}
	\label{eqn:defpq}
\mathcal P_{\pm,i}= e^{-2t}d(e^{2t} \phi_{\pm,i}); \quad \mathcal Q_{\pm,i}=e^{2t} d(e^{-2t} \phi_{\pm,i}).
\end{equation}

We summarize the properties of these two forms in the form of a lemma.
\begin{lem}
	\label{lem:pq}
	(1) $\set{\mathcal P_{-,i}}$ is an orthogonal basis of the ASD two forms on $[t_1,t_2]\times S^3$; so is $\set{\mathcal Q_{+,i}}$.

	(2) $\set{\mathcal P_{+,i}}$ is an orthogonal basis of the SD two forms on $[t_1,t_2]\times S^3$; so is $\set{\mathcal Q_{-,i}}$.

	(3) $\abs{\mathcal P_{\pm,a}}=4$ and $\abs{\mathcal Q_{\pm,a}}=4$.

	(4) we have
\[
	\left( 
	\begin{array}[]{c}
		 \mathcal Q_{+,1} \\
		 \mathcal Q_{+,2} \\
		 \mathcal Q_{+,3} \\
	\end{array}
	\right)= T\cdot
	\left(  
	\begin{array}[]{c}
		 {\mathcal P}_{-,1} \\
		 {\mathcal P}_{-,2} \\
		 {\mathcal P}_{-,3} \\
	\end{array}
	\right)
\]
and
\[
	\left( 
	\begin{array}[]{c}
		 \mathcal Q_{-,1} \\
		 \mathcal Q_{-,2} \\
		 \mathcal Q_{-,3} \\
	\end{array}
	\right)= T^t\cdot
	\left(  
	\begin{array}[]{c}
		 {\mathcal P}_{+,1} \\
		 {\mathcal P}_{+,2} \\
		 {\mathcal P}_{+,3} \\
	\end{array}
	\right)
\]
where $T$ is the matrix defined via \eqref{eqn:xy}.
\end{lem}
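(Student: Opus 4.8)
The plan is to verify each of the four assertions of Lemma \ref{lem:pq} by direct computation, organized so that the algebra is done only once. First I would record the basic building blocks: on the cylinder $[t_1,t_2]\times S^3$ with the product metric $dt^2 + g_{S^3}$, the exterior derivative of $e^{\alpha t}\phi_{\pm,i}$ splits as $\alpha e^{\alpha t}\, dt\wedge \phi_{\pm,i} + e^{\alpha t} d_{S^3}\phi_{\pm,i}$, so that
\[
	\mathcal P_{\pm,i} = 2\, dt\wedge \phi_{\pm,i} + d_{S^3}\phi_{\pm,i}, \qquad
	\mathcal Q_{\pm,i} = -2\, dt\wedge \phi_{\pm,i} + d_{S^3}\phi_{\pm,i}.
\]
Thus everything reduces to understanding $d_{S^3}\phi_{\pm,i}$. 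Since $\phi_{\pm,i}$ is (the pullback of) a self-dual or anti-self-dual harmonic-type two-form datum on $\Real^4$ restricted to $S^3$, one computes directly from the explicit formulas that $d_{S^3}\phi_{\pm,i} = 2\,\ast_{S^3}\phi_{\pm,i}$ — this is exactly the statement that $\phi_{\pm,i}$ lies in the eigenspace of $\triangle_{h;S^3}$ with eigenvalue $4$, together with being coclosed, since for a coclosed one-form $\eta$ on $S^3$ one has $\triangle_{h}\eta = d^*d\eta$ and $\ast_{S^3}d\ast_{S^3}d\eta$ relates $d_{S^3}\phi$ to $\phi$ up to the eigenvalue; I would pin down the constant $2$ by evaluating, say, $\phi_{+,1}$ at the point $(1,0,0,0)\in S^3$. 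Granting $d_{S^3}\phi_{\pm,i} = 2\ast_{S^3}\phi_{\pm,i}$, I get the clean expressions $\mathcal P_{\pm,i} = 2(dt\wedge\phi_{\pm,i} + \ast_{S^3}\phi_{\pm,i})$ and $\mathcal Q_{\pm,i} = 2(-dt\wedge\phi_{\pm,i} + \ast_{S^3}\phi_{\pm,i})$.

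With these formulas in hand, parts (1), (2), (3) are immediate. The Hodge star on the four-dimensional cylinder satisfies $\ast(dt\wedge\eta) = \ast_{S^3}\eta$ and $\ast(\ast_{S^3}\eta) = dt\wedge\eta$ for a one-form $\eta$ on $S^3$ (up to a sign fixed by the orientation convention, which I would choose so that the labels SD/ASD come out as claimed). Hence $\ast\mathcal P_{\pm,i} = 2(\ast_{S^3}\phi_{\pm,i} + dt\wedge\phi_{\pm,i}) = \mathcal P_{\pm,i}$ when the orientation sign is $+1$, i.e.\ the $\mathcal P_{\pm,i}$ lie in one duality eigenspace, and $\ast\mathcal Q_{\pm,i} = 2(-\ast_{S^3}\phi_{\pm,i} - dt\wedge\phi_{\pm,i})\cdot(\mp\ldots)$ — more precisely one checks $\ast\mathcal Q_{\pm,i} = -\mathcal Q_{\pm,i}$, putting the $\mathcal Q$'s in the opposite eigenspace; the $\pm$ bookkeeping on whether $\mathcal P_{-}$ is ASD versus SD is then read off from the sign of $d_{S^3}\phi_{-,i}$ versus $d_{S^3}\phi_{+,i}$ relative to the chosen orientation. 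For orthogonality and the norm: $\langle dt\wedge\phi_{\pm,i}, dt\wedge\phi_{\pm,j}\rangle = \langle\phi_{\pm,i},\phi_{\pm,j}\rangle$ and $\langle\ast_{S^3}\phi_{\pm,i},\ast_{S^3}\phi_{\pm,j}\rangle = \langle\phi_{\pm,i},\phi_{\pm,j}\rangle$, while the cross terms $dt\wedge\phi$ against $\ast_{S^3}\phi$ vanish for degree/parity reasons, so $\langle\mathcal P_{\pm,i},\mathcal P_{\pm,j}\rangle = 8\langle\phi_{\pm,i},\phi_{\pm,j}\rangle$ and similarly for $\mathcal Q$; since $\{\phi_{\pm,i}\}$ is already known to be an orthogonal basis of its eigenspace, I would just compute $\abs{\phi_{\pm,i}}^2 = 2$ at a convenient point to get $\abs{\mathcal P_{\pm,i}}^2 = 16$, i.e.\ $\abs{\mathcal P_{\pm,i}} = 4$, and likewise for $\mathcal Q$.

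Part (4) is where the transformation matrix $T$ enters, and it follows by combining the formulas above with \eqref{eqn:phipm}. From $\mathcal Q_{+,i} = 2(-dt\wedge\phi_{+,i} + \ast_{S^3}\phi_{+,i})$ and $\phi_{+,i} = -(T)_{ij}\phi_{-,j}$, I get $\mathcal Q_{+,i} = -2(T)_{ij}(-dt\wedge\phi_{-,j} + \ast_{S^3}\phi_{-,j})$; but $-2(-dt\wedge\phi_{-,j} + \ast_{S^3}\phi_{-,j})$ is not quite $\mathcal P_{-,j}$, so the key point is that $\ast_{S^3}\phi_{-,j}$ must be re-expressed. Here I would use that $\ast_{S^3}$ commutes appropriately with the $SO(4)$ action: applying $d_{S^3}$ to $\phi_{+,i} = -(T)_{ij}\phi_{-,j}$ directly gives $2\ast_{S^3}\phi_{+,i} = -(T)_{ij}\cdot 2\ast_{S^3}\phi_{-,j}$, which just recovers the same relation, whereas the genuine input is that the $dt$-component flips sign between $\mathcal P$ and $\mathcal Q$. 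Tracking signs carefully: $\mathcal Q_{+,i} = -2\,dt\wedge\phi_{+,i} + 2\ast_{S^3}\phi_{+,i} = 2\,dt\wedge\big((T)_{ij}\phi_{-,j}\big) + 2\ast_{S^3}\phi_{+,i}$, and using $2\ast_{S^3}\phi_{+,i} = d_{S^3}\phi_{+,i} = -(T)_{ij}d_{S^3}\phi_{-,j} = -(T)_{ij}\cdot 2\ast_{S^3}\phi_{-,j}$ — wait, that sign forces me instead to write $\mathcal Q_{+,i}$ in terms of $\mathcal P_{-,j}$ by noting $\mathcal P_{-,j} = 2\,dt\wedge\phi_{-,j} + 2\ast_{S^3}\phi_{-,j}$, so that $(T)_{ij}\mathcal P_{-,j} = 2\,dt\wedge((T)_{ij}\phi_{-,j}) + 2(T)_{ij}\ast_{S^3}\phi_{-,j} = -2\,dt\wedge\phi_{+,i} - 2\ast_{S^3}\phi_{+,i}$; comparing, this is $-\mathcal P_{+,i}$-like, and the correct identity $\mathcal Q_{+,i} = (T)_{ij}\mathcal P_{-,j}$ emerges once all signs from the orientation convention and from \eqref{eqn:phipm} are consistently inserted. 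The second identity is entirely parallel with $T$ replaced by $T^t$, reflecting the symmetry $\phi_{-,i} = -(T)_{ji}\phi_{+,j}$.

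The main obstacle I anticipate is purely a matter of sign discipline: pinning down the orientation convention on the four-dimensional cylinder so that $\mathcal P_{-,i}$ genuinely comes out ASD (not SD), getting the factor and sign in $d_{S^3}\phi_{\pm,i} = 2\ast_{S^3}\phi_{\pm,i}$ exactly right, and then threading those signs through \eqref{eqn:phipm} in part (4) so that $T$, rather than $-T$ or $T^t$, appears in the first display. None of the individual computations is hard — each reduces to evaluating an explicit two-form on $S^3\subset\Real^4$ at the point $(1,0,0,0)$ or $(0,0,0,1)$ — but the bookkeeping must be done once, carefully, and then quoted consistently. I would fix the orientation of the cylinder as $dt\wedge(\text{positive orientation of }S^3)$ and check against the stated conventions \eqref{eqn:Phiplus}–\eqref{eqn:Phiminus} by passing to the flat coordinates $x = e^t\omega$, where $\phi_{\pm,i}$ is the angular part of $\Phi_{\pm,i}/\abs{x}^2$; this makes the SD/ASD labeling of $\mathcal P$ and $\mathcal Q$ automatic and consistent with the rest of the paper.
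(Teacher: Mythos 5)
Your approach is genuinely different from the paper's: the paper passes through the conformal map $\pi(t,\omega)=e^t\omega$ and reads off the SD/ASD structure, the norms, and the transition matrix from the corresponding pullback and inversion identities on $\Real^4$, whereas you work intrinsically on the cylinder, decomposing $\mathcal P_{\pm,i}$ and $\mathcal Q_{\pm,i}$ as $\pm 2\,dt\wedge\phi_{\pm,i}+d_{S^3}\phi_{\pm,i}$ and reducing everything to the $\ast_{S^3}$-eigenform structure of $\phi_{\pm,i}$. That reduction is the right idea and would give a clean proof, but there is a concrete sign error in the central identity that your argument turns on. You assert $d_{S^3}\phi_{\pm,i}=2\,\ast_{S^3}\phi_{\pm,i}$ for \emph{both} families. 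In fact the sign distinguishes them: a direct check at $(1,0,0,0)\in S^3$ gives $\phi_{\pm,1}=dx_2$, $d_{S^3}\phi_{+,1}=2\,\iota^*\Phi_{+,1}=2\,dx_3\wedge dx_4=2\ast_{S^3}\phi_{+,1}$, but $d_{S^3}\phi_{-,1}=2\,\iota^*\Phi_{-,1}=-2\,dx_3\wedge dx_4=-2\ast_{S^3}\phi_{-,1}$. The eigenvalue computation you invoke (for a coclosed $\eta$ on $S^3$, $\triangle_h\eta=\ast d\ast d\eta$) only determines $(\ast_{S^3}d_{S^3})^2\phi=4\phi$, hence $|c|=2$ but not the sign; it cannot distinguish the two three-dimensional summands of the eigenvalue-$4$ space. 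Without the sign split $d_{S^3}\phi_{\pm,i}=\pm 2\ast_{S^3}\phi_{\pm,i}$, all six $\mathcal P_{\pm,i}$ come out in the \emph{same} duality eigenspace on the cylinder, which contradicts parts (1) and (2) of the lemma. The sign split is precisely what makes $\mathcal P_{+,i}$ SD, $\mathcal P_{-,i}$ ASD, $\mathcal Q_{+,i}$ ASD, $\mathcal Q_{-,i}$ SD under $\ast_4(dt\wedge\eta)=\ast_{S^3}\eta$ and $\ast_4\mu=dt\wedge\ast_{S^3}\mu$.

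The second gap is in part (4), where you differentiate the pointwise relation $\phi_{+,i}=-T_{ij}\phi_{-,j}$ to get $d_{S^3}\phi_{+,i}=-T_{ij}\,d_{S^3}\phi_{-,j}$. That step drops the term $-(dT_{ij})\wedge\phi_{-,j}$, and $T$ is not constant on $S^3$; the identity is false (it is also the source of the ``wait, that sign forces me'' backtracking in your write-up). What you actually need is much weaker and does hold: $\ast_{S^3}$ is a pointwise algebraic operation, so multiplying by the scalar functions $T_{ij}$ commutes with it, giving $\ast_{S^3}\phi_{+,i}=-T_{ij}\ast_{S^3}\phi_{-,j}$ directly from \eqref{eqn:phipm}, without ever differentiating $T$. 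Combined with the corrected sign split, the computation closes cleanly: $\mathcal Q_{+,i}=-2\,dt\wedge\phi_{+,i}+2\ast_{S^3}\phi_{+,i}=T_{ij}\bigl(2\,dt\wedge\phi_{-,j}-2\ast_{S^3}\phi_{-,j}\bigr)=T_{ij}\mathcal P_{-,j}$, and likewise $\mathcal Q_{-,i}=(T^t)_{ij}\mathcal P_{+,j}$. So the route you chose is viable and arguably more self-contained than the paper's (no excursion to $\Real^4$ or to the inversion $\underline{r}$), but as written the two sign issues break parts (1), (2), and the derivation of (4), and both must be repaired before the proof stands.
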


\begin{proof}
	First, we observe that it suffices to prove (1)-(3) for $\mathcal P_{\pm,i}$, because by \eqref{eqn:defpq}, the orientation reversing isometry $(t,\omega)\mapsto (-t,\omega)$ takes $\mathcal P_{\pm,i}$ to $\mathcal Q_{\pm,i}$.

	Let $\pi$ be the map from cylinder to $\Real^4\setminus \set{0}$ given by
\[
\pi(t,\omega)= e^t \omega.
\]
We may check that 
\begin{equation}
	\label{eqn:check}
	\frac{1}{2}e^{2t}\mathcal P_{\pm,i} = \pi^* \Phi_{\pm,i}.
\end{equation}
In fact, take $i=1$ for example, $\Phi_{+,1}=\frac{1}{2} d(x_1 dx_2-x_2 dx_1+x_3dx_4- x_4dx_3)$. Since $\phi_{+,1}$ is regarded as a form on cylinder indepdent of $t$, we regard $\iota$ as the map $x\mapsto \frac{x}{\abs{x}}$ and compute
\[
	\phi_{+,1}= \iota^*(x_1 dx_2-x_2 dx_1+x_3dx_4- x_4dx_3)=\frac{1}{\abs{x}^2}(x_1 dx_2-x_2 dx_1+x_3dx_4- x_4dx_3).
\]
Muliplying both sides by $e^{2t}$ and taking differential, we obtain \eqref{eqn:check}.

Recall that $\set{\Phi_{\pm,i}}_{i=1}^3$ is an orthogonal basis of SD/ASD forms on $\Real^4$. (1) and (2) then follows from the fact that $\pi$ is conformal.

For (3), we notice that the norm of $\Phi_{\pm,i}$ is $2$. By the conformality of $\pi$,
\[
\abs{\mathcal P_{\pm,i}}= 2 e^{-2t} \abs{\pi^* \Phi_{\pm,i}} = 4.
\]

For a proof of (4), we also use the pullback of $\pi$ to translate this into an equation for forms defined on $\Real^4$. We need the map $\underline{r}:\Real^4\setminus \set{0}\to \Real^4\setminus \set{0}$ given by
\[
	\underline{r}(x)=\frac{x}{\abs{x}^2}.
\]
It follows from \eqref{eqn:check} that
\begin{equation}
	\label{eqn:check2}
	\frac{1}{2}e^{-2t}\mathcal Q_{\pm,i}= \pi^* \underline{r}^* \Phi_{\pm,i}.
\end{equation}
With \eqref{eqn:check} and \eqref{eqn:check2}, it suffices to prove
\[
	\underline{r}^*
	\left( 
	\begin{array}[]{c}
		 \Phi_{+,1} \\
		 \Phi_{+,2} \\
		 \Phi_{+,3} \\
	\end{array}
	\right)=  \frac{1}{\abs{x^4}} T\cdot
	\left(  
	\begin{array}[]{c}
		 {\Phi}_{-,1} \\
		 {\Phi}_{-,2} \\
		 {\Phi}_{-,3} \\
	\end{array}
	\right),
\]
which we may verify via direct computation. The same computation applies to the other equation.
\end{proof}
Later in our proofs, we need to compute $\iota_V \mathcal W$, where $V$ is one of
\[
	\partial_t, X_{\pm,i}
\]
and $\mathcal W$ is one of
\[
	\mathcal P_{\pm,i}, \mathcal Q_{\pm,i}
\]
for $i=1,2,3$.

We present the results in the form of a table
\begin{equation}
\label{eqn:table}
	\begin{array}[]{|c||c|c|c|c|c|c|}
		\hline
		\frac{1}{2}\iota_V\mathcal W& \mathcal P_{-,1} & \mathcal P_{-,2} & \mathcal P_{-,3} & \mathcal P_{+,1} & \mathcal P_{+,2} & \mathcal P_{+,3}  \\
	 \hline
	 \hline
	 \partial_t  & \phi_{-,1} & \phi_{-,2} & \phi_{-,3}  & \phi_{+,1} &  \phi_{+,2} &  \phi_{+,3}  \\
	 \hline
	 X_{-,1} & -\partial_t &  \phi_{-,3} & - \phi_{-,2} &  &  &   \\
	 \hline
	 X_{-,2} &  - \phi_{-,3} & -\partial_t &  \phi_{-,1} &  &  &   \\
	 \hline
	 X_{-,3} &  \phi_{-,2} & - \phi_{-,1} & -\partial_t &  &  &   \\
	 \hline
	 X_{+,1} &  &  &  & -\partial_t & - \phi_{+,3} &  \phi_{+,2}  \\
	 \hline
	 X_{+,2} &  &  &  &  \phi_{+,3} & -\partial_t & - \phi_{+,1}  \\
	 \hline
	 X_{+,3} &  &  &  & - \phi_{+,2} &  \phi_{+,1} & -\partial_t \\
	 \hline
	\end{array}
\end{equation}
Notice that the blanks in the table can be filled by using the transition matrix in \eqref{eqn:xy}. For example,
\begin{eqnarray*}
	\iota_{X_{-,i}} \mathcal P_{+,j}&=& - (T)_{mi}\iota_{X_{+,m}}  \mathcal P_{+,j}.
\end{eqnarray*}

There should be a similar table as above with all $\mathcal P$ replaced by $\mathcal Q$.
In future applications, we care only about the projection of $\iota_{V}\mathcal W$ along the $TS^3$ direction. In this sense, the following lemma allows us to compute the table for $\mathcal W=\mathcal Q_{\pm,i}$.
\begin{lem}\label{lem:PQ}
	We also have
	\begin{equation*}
		\iota_{X_{\pm;i}} \mathcal P_{\pm,j}= \iota_{X_{\pm;i}} \mathcal Q_{\pm,j} \quad {\rm mod} \quad dt.
	\end{equation*}
\end{lem}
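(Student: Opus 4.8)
The plan is to reduce the statement to the computations already packaged in Lemma \ref{lem:pq}. The key point is that $\mathcal P_{\pm,i}$ and $\mathcal Q_{\pm,i}$ differ only by terms that annihilate the $TS^3$ directions when contracted against $X_{\pm,i}$. First I would write out the difference $\mathcal P_{\pm,i} - \mathcal Q_{\pm,i}$ explicitly using \eqref{eqn:defpq}: since $\mathcal P_{\pm,i}= d\phi_{\pm,i} + 2\, dt\wedge \phi_{\pm,i}$ and $\mathcal Q_{\pm,i}= d\phi_{\pm,i} - 2\, dt\wedge \phi_{\pm,i}$, we get
\[
	\mathcal P_{\pm,i} - \mathcal Q_{\pm,i} = 4\, dt\wedge \phi_{\pm,i}.
\]
Here I am using that $\phi_{\pm,i}$ is, by the remark following \eqref{eqn:phipm}, regarded as a $t$-independent one-form on the cylinder that is purely tangential to $S^3$, so that $d(e^{ct}\phi_{\pm,i}) = c\, e^{ct}\, dt\wedge \phi_{\pm,i} + e^{ct}\, d^{S^3}\phi_{\pm,i}$.

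Next I would contract with $X_{\pm;i}$. Since $X_{\pm;i}$ is tangent to $S^3$ (hence $\iota_{X_{\pm;i}} dt = 0$), the standard interior-product identity gives
\[
	\iota_{X_{\pm;i}}\bigl(dt\wedge \phi_{\pm,j}\bigr) = (\iota_{X_{\pm;i}} dt)\,\phi_{\pm,j} - dt\wedge (\iota_{X_{\pm;i}} \phi_{\pm,j}) = - (\iota_{X_{\pm;i}}\phi_{\pm,j})\, dt.
\]
Because $\iota_{X_{\pm;i}}\phi_{\pm,j}$ is a function (the pairing $\langle X_{\pm;i}, X_{\pm;j}\rangle = \delta_{ij}$, using that $\phi_{\pm,j}$ is the metric dual of $X_{\pm,j}$), the right-hand side is a multiple of $dt$. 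Hence $\iota_{X_{\pm;i}}(\mathcal P_{\pm,j} - \mathcal Q_{\pm,j}) = 4\,\iota_{X_{\pm;i}}(dt\wedge\phi_{\pm,j})$ is a multiple of $dt$, which is exactly the assertion $\iota_{X_{\pm;i}}\mathcal P_{\pm,j} \equiv \iota_{X_{\pm;i}}\mathcal Q_{\pm,j}$ modulo $dt$.

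There is essentially no obstacle here; the only thing to be careful about is the bookkeeping of which forms are $t$-independent. The cleanest way to organize this is to note that the orientation-reversing isometry $\sigma:(t,\omega)\mapsto(-t,\omega)$ used in the proof of Lemma \ref{lem:pq} satisfies $\sigma^*\mathcal P_{\pm,i} = \mathcal Q_{\pm,i}$ and $\sigma_* X_{\pm,i} = X_{\pm,i}$ while $\sigma^* dt = -dt$; so $\iota_{X_{\pm,i}}\mathcal Q_{\pm,j} = \iota_{X_{\pm,i}}\sigma^*\mathcal P_{\pm,j} = \sigma^*(\iota_{X_{\pm,i}}\mathcal P_{\pm,j})$, and applying $\sigma^*$ to the entries of the table \eqref{eqn:table} changes only the sign of the $\partial_t$-dual component (the $\phi_{\pm,i}$ entries are $t$-independent and $S^3$-tangential, hence $\sigma$-invariant). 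Either route gives the claim; I would present the direct interior-product computation as the main argument since it is shortest.
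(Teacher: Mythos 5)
Your proof is correct and matches the paper's argument: both hinge on writing $\mathcal P_{\pm,j}$ and $\mathcal Q_{\pm,j}$ via \eqref{eqn:defpq} so that they differ only by the term $4\,dt\wedge\phi_{\pm,j}$, and then observing that $\iota_X(dt\wedge\phi_{\pm,j})$ is a multiple of $dt$ whenever $dt(X)=0$. The paper presents this as a short chain of equalities modulo $dt$ rather than isolating the difference, but it is the same computation.
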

\begin{proof}
The proof is by direct computation. We write $X$ for any vector field tangent to $S^3$.
\begin{eqnarray*}
	\iota_{X} \mathcal P_{\pm,j}&=&  e^{-2t} \iota_X (d (e^{2t}\phi_{\pm,j})) \\
	&=& \iota_X \left(  2dt\wedge \phi_{\pm,j} + d\phi_{\pm,j} \right) \\
	&=& \iota_X \left(  -2dt\wedge \phi_{\pm,j} + d\phi_{\pm,j} \right) \quad \mbox{mod} \quad dt \\
	&=& \iota_X (e^{2t} d(e^{-2t}\phi_{\pm,j}))\quad \mbox{mod} \quad dt \\
	&=& \iota_X \mathcal Q_{\pm,j}\quad \mbox{mod} \quad dt .
\end{eqnarray*}
Here we have used the fact that $\iota_X (dt\wedge \phi_{\pm,j})$ is a multiple of $dt$ as long as $dt(X)=0$.
\end{proof}

\section{Gauge fixing on the neck}
\label{sec:gauge}

Usually when we study a connection near a point, we may take the Coulomb gauge, in which $d^*A=0$. The advantage is that the PDE satisfied by a Yang-Mills connection becomes elliptic in this gauge. There are other situations in which we want to use the normal gauge, namely, a local frame obtained by parallel transportation along radial directions. The advantage of this frame is that the connection form $A$ vanishes at $1+\alpha$ order at the origin for any $\alpha\in (0,1)$. A natural question is whether we can find a local frame that combines these two features. The answer is yes, and it can be proved by using the techniques developed in this section. Since it is not used in this paper, we shall not prove it.

In this section, we carry the above idea further into the discussion of a connection defined on a long cylinder $\Omega$. We prove the existence of a gauge in which, first, the connection form $A$ satisfies $d^*A=0$ (with respect to the round cylinder metric), second, the size of $A$ (measured with respect to cylinder metric as well) decays exponentially fast at an order between $\sqrt{3}$ and $2\sqrt{2}-1$ as we moves from the boundary into the middle of the cylinder. In the next section, we shall work in this gauge and prove better regularity for $A$ when the connection is Yang-Mills.

As in the introduction, $\Omega$ is $[\log \lambda -\log \delta, \log\delta]\times S^3$. Here $\delta$ is a small constant that can be fixed and $\lambda$ is allowed to vary. It is important that throughout this paper, all constants are independent of $\lambda$, while we allow them to depend on $\delta$.

\begin{thm}
	\label{thm:neckgauge} There is $\epsilon_0>0$. Suppose $\Omega=[\log \lambda-\log\delta,\log\delta]\times S^3$ is a long cylinder and $D$ is a connection satisfying
	\begin{equation}
		\label{eqn:neckassume}
		\abs{F_D} + \abs{\nabla_D F_D} + \abs{\nabla^2_D F_D}\leq \epsilon_0(e^t+\lambda e^{-t})^2.
	\end{equation}
	Then there is a trivialization over $\Omega$ in which $D=d+A$ such that
	\begin{equation}
		\int_{\set{\frac{1}{2}\log \lambda}\times S^3} A(\partial_t) = 0;\quad 
		\int_{\set{\frac{1}{2}\log \lambda}\times S^3} A(\partial_t)\cdot \omega_i = 0;
		\label{eqn:gg1}
	\end{equation}
	\begin{equation}
	\int_{\set{\frac{1}{2}\log \lambda}\times S^3} \partial_t (A(\partial_t))\cdot \omega_i = 0,\quad i=1,2,3,4
		\label{eqn:gg2}
	\end{equation}
	and
	\begin{equation}
	d^*A = 0;\qquad \norm{A}_{\mathcal X_2}\leq C\epsilon_0.
		\label{eqn:gg3}
	\end{equation}
\end{thm}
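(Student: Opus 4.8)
The plan is to build the trivialization in two stages, with every constant kept independent of the length $\log\lambda^{-1}$ of $\Omega$. First one produces a preliminary trivialization carrying only crude decay: since \eqref{eqn:neckassume} is stronger than $\abs{F_D}\le C\eta^2$, the construction behind Lemma \ref{lem:broken} applies (one tracks the smallness constant $\epsilon_0$ through Uhlenbeck's gauge fixing on the unit sub-cylinders), giving a trivialization $e_0$ with $D=d+A_0$ and $\norm{A_0}_{C^{1,\alpha}([t,t+1]\times S^3)}\le C\epsilon_0\,\eta(t)^2$. Then one corrects $e_0$ by a single gauge transformation that enforces the Coulomb condition and the normalizations \eqref{eqn:gg1}--\eqref{eqn:gg2} at once, without spoiling the decay. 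No limit $\lambda\to0$ is taken at this point; the whole difficulty is uniformity in the cylinder length.

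Write the correcting gauge transformation as $u=\exp\xi$ with $\xi$ a $\mathfrak g$-valued function, so that $A=u^{-1}A_0u+u^{-1}du$; then $d^*A=0$ reads, schematically,
\[
	\Delta_0\,\xi \;=\; d^*A_0 \;+\; \mathcal N(\xi,d\xi,A_0),
\]
where $\Delta_0$ is the $\mathfrak g$-valued Hodge Laplacian on functions over the round cylinder and $\mathcal N$ collects the terms that are quadratic in $(\xi,d\xi)$ or carry a factor $A_0$. I would solve this by a contraction mapping in the weighted space $\mathcal X_2$, whose key features are that it measures exponential decay at a rate exceeding $\sqrt3$ (to clear the first nonzero indicial root) and below $2$ (for the reasons explained below): the forcing $d^*A_0$ has $\mathcal X_2$-norm $\le C\epsilon_0$ since its pointwise bound $\lesssim\epsilon_0\eta^2$ decays faster than that weight, and $\mathcal N$ is quadratically small, so the iteration converges once $\epsilon_0$ is small. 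The inverse of $\Delta_0$ used here is not canonical: separating variables on $S^3$ and recalling that $-\triangle_{S^3}$ has first nonzero eigenvalue $3$, with eigenfunctions $\omega_i$, and next eigenvalue $8$, the homogeneous modes that do not decay faster than rate $\sqrt3$ are the constant, the linear-in-$t$ mode, and the eight modes $e^{\pm\sqrt3\,t}\,\omega_i$; modulo the harmless overall constant, the remaining nine directions match the nine scalar (per generator of $\mathfrak g$) conditions \eqref{eqn:gg1}--\eqref{eqn:gg2}. Since the gauge change sends $A(\partial_t)$ to $A(\partial_t)+\partial_t\xi$ up to terms of size $\epsilon_0$, the first identity of \eqref{eqn:gg1} fixes the coefficient of the linear mode (the only one contributing a nonzero constant to $\int_{S^3}A(\partial_t)$), while the four conditions in the second identity of \eqref{eqn:gg1} together with the four in \eqref{eqn:gg2} fix the pair of coefficients of each $e^{\pm\sqrt3\,t}\,\omega_i$ by prescribing the value and the $t$-derivative of the $\omega_i$-mode of $A(\partial_t)$ on the center slice $\{\tfrac12\log\lambda\}\times S^3$. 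The resulting linear system (block-diagonal at leading order, with $O(\epsilon_0)$ off-diagonal corrections absorbed by perturbation) is uniquely solvable, each leading block being invertible because no nonzero combination of $e^{\sqrt3\,t}$ and $e^{-\sqrt3\,t}$ has a double zero.

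The analytic heart — and the step I expect to be hardest — is the \emph{uniform} (in $\lambda$) invertibility of $\Delta_0$ on $\mathcal X_2$ together with the ensuing estimate. Decomposing into spherical harmonics reduces $\Delta_0$ to scalar ODEs $f''-\mu f=g$ on $[\log\lambda-\log\delta,\log\delta]$ with $\mu\in\{0,3,8,\dots\}$; for $\mu\ge3$ one writes the Green's function explicitly and verifies that its weighted bounds are $\lambda$-independent precisely because the decay exponent stays clear of every indicial root $\pm\sqrt\mu$ — here it matters that the eigenvalue gap from $3$ to $8$ leaves the interval $(\sqrt3,\sqrt8)$ free, and that staying strictly below $2$ also avoids the rate-$2$ resonance that the eigenvalue $4=2^2$ on $1$-forms would otherwise create. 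The $\mu=0$ modes are handled not by weighted elliptic theory but by the normalizations of the previous paragraph. Reassembling the modes, with an interpolation-type step to pass from the separated estimates to the Hölder norms on $[t,t+1]\times S^3$ that enter $\mathcal X_2$, yields the uniform estimate, and $\norm{A}_{\mathcal X_2}\le C\epsilon_0$ then drops out of the fixed point. Finally, one cannot claim the sharper rate $2$ at this stage: the curvature forcing lies exactly at the $1$-form indicial root $2$, and the rate-$\sqrt3$ modes introduced while enforcing \eqref{eqn:gg1}--\eqref{eqn:gg2} leave a tail whose weighted norm is bounded (indeed $\to0$ as $\lambda\to0$) only for exponents strictly below $2$; the specific value $2\sqrt2-1$ is merely a convenient point of the admissible window $(\sqrt3,2)$, to be improved in Section \ref{sec:asymptotic} once the Yang--Mills equation is brought in.
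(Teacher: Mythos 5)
Your proposal is correct in outline but organizes the argument quite differently from the paper. Where the paper runs a continuity method along the path $A_\tau=\tau A$, applying the implicit function theorem at each point via the packaged map $\Phi:\mathcal X_1\times\mathcal X_2\to\mathcal Y$ (Lemma \ref{lem:iso}, Lemma \ref{lem:perturb}) and closing the deformation with an a priori estimate phrased purely in terms of the curvature (Lemma \ref{lem:ae}), you go for a one-shot Lyapunov--Schmidt argument: a Banach fixed point for the ``high'' part of $\xi$ in the weighted space, together with a finite-dimensional linear system for the $\mu=0,3$ modes to enforce \eqref{eqn:gg1}--\eqref{eqn:gg2}. The shared technical core is the same --- the $\lambda$-uniform right inverse of the scalar Laplacian on the weighted H\"older scale, obtained by separation of variables and an explicit Green's function avoiding the indicial roots $0,\sqrt3,\sqrt8$ --- and this is what the paper isolates as Theorem \ref{thm:linear} and Lemma \ref{lem:neumann}. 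Your route is arguably more direct, but it obliges you to track the smallness of the starting form $A_0$ and the gauge correction together through the iteration; the paper's a priori estimate Lemma \ref{lem:ae} decouples this by bounding any Coulomb-normalized $A$ directly in terms of $\norm{F_A}$, which is what makes the closedness of the continuity set painless. Both routes end up needing the same count: $10\dim\mathfrak g$ free low-mode constants matched against the $9\dim\mathfrak g$ normalization conditions plus the $\int_{\Omega_C}\xi=0$ constraint built into $\mathcal X_1$.

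One small inaccuracy worth flagging: you attribute the upper bound $\alpha_2<2$ to the one-form Hodge eigenvalue $4$ on $S^3$. That eigenvalue is irrelevant for this theorem --- the operator you are inverting acts on $\mathfrak g$-valued \emph{functions}, whose indicial roots are $0,\pm\sqrt3,\pm\sqrt8$ and never touch $2$. The actual reasons to stay below $2$ here are (i) the $e^{\pm\sqrt3\,t}\omega_i$ modes you must add to satisfy \eqref{eqn:gg1}--\eqref{eqn:gg2} have coefficients of size $\lambda^{(2-\sqrt3)/2}$, and this forces them to remain subdominant in the weighted norm precisely when the decay exponent is strictly less than $2$; and (ii) the forcing $d^*A_0$ sits at rate $2$, so you want the solution space to see a strictly weaker weight. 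The eigenvalue $4$ and its root $2$ only enter in Section \ref{sec:asymptotic}, where the one-form part of $A$ is analysed; there they dictate the \emph{upper} limit $\alpha_1<\sqrt8$ and account for the $\phi_{\pm,i}$ terms being the leading ones.
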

Here $\omega_i$'s are the eigenfunctions defined in Section \ref{sub:diff} and the $\norm{\cdot}_{\mathcal X_2}$ measures the size of $A$ with an exponential decay weight. The precise definition is given below.

\begin{rem}
	The assumption on the derivative of curvature in \eqref{eqn:neckassume} is obviously not optimal. It is a technical problem in the proof of Lemma \ref{lem:ae}. However, this is not a problem when the connection is Yang-Mills. Moreover, the exponent $2$ in the right hand side of \eqref{eqn:neckassume} may be replaced by $\alpha_2<2$ (see below) without any consequence.
\end{rem}

\subsection{Function spaces}
In this subsection, we give definitions of various function spaces that are needed for the statement and the proof of Theorem \ref{thm:neckgauge}.

Let $\Omega$ be the cylinder $[\log \lambda-\log \delta, \log \delta]\times S^3$ in Theorem \ref{thm:neckgauge}. 
Set
\begin{eqnarray*}
	\Omega_t &=&  \set{t}\times S^3 \\
	\Omega_L &=&  \set{\log \delta}\times S^3 \\
	\Omega_R &=&  \set{\log \lambda - \log \delta}\times S^3 \\
	\Omega_C &=&  \set{\frac{1}{2}\log \lambda}\times S^3.
\end{eqnarray*}
Denote the interval $[\log \lambda -\log \delta, \log \delta-1]$ by $I$ and for any $t\in I$, set
\[
	\Omega_{[t]}:=[t,t+1]\times S^3.
\]

Since $\Omega$ is a manifold with boundary, there is a well defined $C^{k,\alpha}(\Omega)$ space of functions with its $C^{k,\alpha}$ norm for $k\in \mathbb N\cup \set{0}$ and $\alpha\in (0,1)$. In this paper, we write $C^{k+\alpha}(\Omega)$ instead of $C^{k,\alpha}(\Omega)$. 
Throughout the paper, we pick and fix $\alpha_1\in (\sqrt{3}+1,\sqrt{8})$ and set $\alpha_2=\alpha_1-1$ and $\alpha_3=\alpha_1-2$. Obviously, $\alpha_2\in (\sqrt{3},2)$ and $\alpha_3\in (0,\sqrt{3})$. 

Recall that on $\Omega$, the usual $C^\alpha$ norm (of functions and one forms) is defind as
\[
	\norm{f}_{C^\alpha}:= \sup_{t\in I} \norm{f}_{C^\alpha(\Omega_{[t]})}.
\]
Denote by $D^\alpha(\Omega)$ be the space of $C^\alpha$ one forms on $\Omega$ with
\[
	\norm{\omega}_{C^\alpha}:= \sup_{t\in I} \norm{\omega}_{C^\alpha(\Omega_{[t]})}.
\]

Define $\mathcal X_1$ to be the subspace of $C^{\alpha_1}(\Omega)$ satisfying
\begin{equation}
	\int_{\Omega_C} u d\omega = 0.
	\label{eqn:x1}
\end{equation}
For a function $u\in \mathcal X_1$, we define 
\[
	\norm{u}_{\mathcal X_1} := \sup_{t\in I} \norm{u}_{C^{\alpha_1}(\Omega_{[t]})} \cdot \eta^{-\alpha_2}(t)
\]
where we recall that $\eta(t)= e^t + e^{-(t-\log \lambda)}$.
Then $(\mathcal X_1, \norm{\cdot}_{\mathcal X_1})$ is a Banach space.
\begin{rem}
	(1) We remark that for a fixed $\lambda$, the norms $\norm{\cdot}_{\mathcal X_1}$ and $\norm{\cdot}_{C^{\alpha_1}}$ are equivalent norms (by a constant depending on $\lambda$). However, since we are interested in estimates that are uniform in $\lambda$, they are different.

	(2) For the purpose of this paper, the functions in $\mathcal X_1$ are regarded as Lie-algebra valued, or matrix-valued. This remark also applies to the spaces to be defined below.
\end{rem}

Recall that $\omega_1,\dots,\omega_4$ are the four eigenfunctions of $\triangle_{S^3}$ corresponding to eigenvalue $-3$. Let $\Psi$ be the orthogonal projection to the complement of the $5$-dimensional subspace spanned by $1,\omega_1,\dots,\omega_4$ in $C^{\alpha_2}(S^3)$. Denote the image of $\Psi$ by $C^{\alpha_2}_{\Psi}$.

Let $\mathcal X_2$ be $D^{\alpha_2}$, with its norm defined by
\[
	\norm{A}_{\mathcal X_2}:= \sup_{t\in I} \norm{A}_{C^{\alpha_2}(\Omega_{[t]})} \cdot \eta^{-\alpha_2}(t).
\]
Similarly, $\mathcal X_3$ is defined to be the space of $C^{\alpha_3}$ functions, but equipped with the norm
\[
	\norm{u}_{\mathcal X_3}:= \sup_{t\in I} \norm{u}_{C^{\alpha_3}(\Omega_{[t]})} \cdot \eta^{-\alpha_2}(t).
\]

Finally, define $\mathcal Y$ to be the product space 
\[
	\mathcal Y:= \mathcal X_3 \times C^{\alpha_2}_{\Psi} \times C^{\alpha_2}_{\Psi}\times \mathfrak g^9
\]
with the norm
\begin{equation}
	\label{eqn:normy}
	\begin{split}
		\norm{(v,v_L,v_R,b_0,a_i,b_i)}_{\mathcal Y}:=&  \norm{v}_{\mathcal X_3}+ \norm{v_L}_{C^{\alpha_2}(\Omega_L)}+ \norm{v_R}_{C^{\alpha_2}(\Omega_R)} \\
							     & + \frac{\abs{b_0}+\sum_{i=1}^4 (\abs{a_i}+\abs{b_i})}{\lambda^{\alpha_2/2}}.
	\end{split}
\end{equation}
Here $\mathfrak g$ is the Lie-algebra, or equivalently, one may take it as the space of skew-symmetric matrics.

We shall be concerned with the map 
\[
	\Phi: \mathcal X_1\times \mathcal X_2 \to \mathcal Y
\]
given by
\begin{equation}
	\begin{split}
		\Phi(u,A):=&  \left( d^*A',\Psi(A'(\partial_t)|_{\Omega_L}), \Psi(A'(\partial_t)|_{\Omega_R}), \int_{\Omega_C}A'(\partial_t),  \right.\\
		& \left.  \int_{\Omega_C} A'(\partial_t)\cdot \omega_i, \int_{\Omega_C} \partial_t(A'(\partial_t))\cdot \omega_i\right)
	\end{split}
	\label{eqn:phi}
\end{equation}
where $A'=e^{-u}de^u + e^{-u}Ae^u$. $\Phi$ is a smooth map.
\footnote{
$\mathcal X_1$, $\mathcal X_2$ and $\mathcal X_3$ are nothing but the usual H\"older space, equipped with some equivalent norms. Hence the smoothness of $\Phi$ is irrelevant to the weighted norms. 
}

\subsection{Continuity method}

Due to a similar argument of Uhlenbeck that justified Lemma \ref{lem:broken}, there is some gauge over $\Omega$ in which
\begin{equation}
	\label{eqn:firsta}
	\norm{A}_{C^{\alpha_1}(\Omega_{[t]})} \leq \epsilon_0 C \eta^{\alpha_2}(t),\quad \forall t\in I.
\end{equation}
Here $C$ is universal and we have used \eqref{eqn:neckassume}, $\alpha_1< 3$ and $\alpha_2<2$.

The proof of Theorem \ref{thm:neckgauge} is by continuity method. We choose the path
\[
	A_\tau:= \tau A
\]
for $t\in [0,1]$. 
By \eqref{eqn:firsta} there is a universal number $C_1$ such that 
\begin{equation}
	\sup_{\tau\in [0,1]} \norm{F_{\tau}}_{\mathcal X_2} \leq C_1 \epsilon_0; \quad \sup_{\tau\in [0,1]} \norm{A_\tau}_{\mathcal X_1}\leq C_1\epsilon_0.
	\label{eqn:ftau}
\end{equation}

Let $E$ be the set of $\tau\in [0,1]$ for which there is a gauge transformation $s=e^u$ (for some $u\in \mathcal X_1$) such that 
\begin{equation}
	\label{eqn:aprime}
	A':= s^{-1}ds +s^{-1}As
\end{equation}
satisfies

(E1) 
\[
d^*A'=0;
\]

(E2) 
\[
\Psi(A'(\partial_t)|_{\Omega_L})=\Psi(A'(\partial_t)|_{\Omega_R})=0;
\]

(E3) for $i=1,2,3,4$
\[
\int_{\Omega_C} A'(\partial_t) = \int_{\Omega_C} A'(\partial_t)\cdot\omega_i =\int_{\Omega_C} \partial_t(A'(\partial_t))\cdot \omega_i=0.
\]

(E4) for some universal constant $C_2$ (determined later in Lemma \ref{lem:ae})
\[
\norm{A'}_{\mathcal X_2}\leq C_2 \epsilon_0.
\]

Obviously $E$ is not empty. We will show that $E$ is both open and closed.

To see that $E$ is closed, assume $\tau_i\in E$ and $\tau_i\to \tau$. Since
\[
	\norm{A_{\tau_i}}_{\mathcal X_2}\leq C_1\epsilon_0 \quad \text{and}  \quad \norm{A'_{\tau_i}}_{\mathcal X_2}\leq C_2\epsilon_0,
\]
we have
\[
	\norm{ds_i}_{C^{\alpha_2}(\Omega_{[t]})}\leq C\epsilon_0 \eta(t)^{\alpha_2}, \qquad \forall t\in I.
\]
Here $s_i=e^{u_i}$ for some $u_i\in \mathcal X_1$. The above inequality implies that
\[
	\norm{du_i}_{C^{\alpha_2}(\Omega_{[t]})}\leq C\epsilon_0 \eta(t)^{\alpha_2}, \qquad \forall t\in I.
\]
Recall that in the definition of $\mathcal X_1$, we assumed that $\int_{\Omega_C} u_id\omega=0$. Hence, we obtain
\[
	\norm{u_i}_{C^{\alpha_1}(\Omega_{[t]})}\leq C\epsilon_0 \eta(t)^{\alpha_2}, \qquad \forall t\in I,
\]
that is, $\norm{u_i}_{\mathcal X_1}\leq C\epsilon_0$.

By passing to a subsequence if necessary, $A'_{\tau_i}$ converges locally in $C^{\alpha_2}$ to $A'_\tau$ and $s_i$ converges locally in $C^{\alpha_1}$ to $s=e^u$. Therefore
\[
	A'_\tau:= s^{-1}ds + s^{-1}A_\tau s
\]
satisfies (E1-4).

To show $E$ is open, assume that $\tau\in E$. Namely, there is $u\in \mathcal X_1$, $s=e^u$ such that $A'_\tau$ given in \eqref{eqn:aprime} satisfies (E1-4). 

{\bf Claim:} there is a neighborhood of $A'_\tau$ in $\mathcal X_2$ norm such that for any $\tilde{A}$ is this neighborhood, there is some $\tilde{u}\in \mathcal X_1$ satisfying
\[
	\Phi(\tilde{u},\tilde{A})=0.
\]
This is equivalent to the requirement that $\tilde{A}'= e^{-\tilde{u}} de^{\tilde{u}} + e^{-\tilde{u}}\tilde{A} e^{\tilde{u}}$ satisfies (E1-3).

For the proof of the claim, it suffices to apply the implicit function theorem at $(0,A'_\tau)$. For that purpose, we compute
\begin{equation*}
	\begin{split}
	D_1\Phi|_{(0,A'_\tau)}v & = \bigg(-\triangle v + d^*(A'_\tau v-vA'_\tau), \\
	& \Psi(\partial_t v-vA'_\tau(\partial_t)+A'_\tau(\partial_t)v)|_{\Omega_L},\\
	& \Psi(\partial_t v-vA'_\tau(\partial_t)+A'_\tau(\partial_t)v)|_{\Omega_R}, \\
	& \int_{\Omega_C} (\partial_t v-vA'_\tau(\partial_t)+A'_\tau(\partial_t)v),\\
	& \int_{\Omega_C} (\partial_t v-vA'_\tau(\partial_t)+A'_\tau(\partial_t)v)\cdot \omega_i,\\
	&  \int_{\Omega_C} \partial_t (\partial_t v-vA'_\tau(\partial_t)+A'_\tau(\partial_t)v)\cdot \omega_i \bigg).
	\end{split}
\end{equation*}
When $A'_\tau=0$, this reduces to 
\begin{equation*}
	\begin{split}
	D_1\Phi|_{(0,0)}v & = \bigg(-\triangle v, \Psi(\partial_t v)|_{\Omega_L}, \Psi(\partial_t v)|_{\Omega_R}, \\
	& \int_{\Omega_C} (\partial_t v),\int_{\Omega_C} (\partial_t v)\cdot \omega_i, \int_{\Omega_C} \partial_t (\partial_t v)\cdot \omega_i \bigg).
	\end{split}
\end{equation*}
Now, the proof of the claim follows from the following two lemmas.
\begin{lem}
	\label{lem:iso}
	$D_1\Phi_{(0,0)}$ is an isomorphism from $\mathcal X_1$ to $\mathcal Y$.
\end{lem}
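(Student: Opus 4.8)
The plan is to verify that $D_1\Phi_{(0,0)}$, being a linear map between Banach spaces, is both injective and surjective with bounded inverse, by decoupling the system into scalar-function pieces governed by the Laplacian on the cylinder and a finite-dimensional piece coming from the low eigenmodes on $S^3$. Recall $D_1\Phi_{(0,0)}v = \big(-\triangle v, \Psi(\partial_t v)|_{\Omega_L}, \Psi(\partial_t v)|_{\Omega_R}, \int_{\Omega_C}\partial_t v, \int_{\Omega_C}(\partial_t v)\cdot\omega_i, \int_{\Omega_C}\partial_t(\partial_t v)\cdot\omega_i\big)$. The natural approach is to expand $v(t,\cdot)$ in eigenfunctions of $\triangle_{S^3}$: write $v = \sum_\mu v_\mu(t)\,\varphi_\mu$ where $\varphi_\mu$ runs over an eigenbasis with $-\triangle_{S^3}\varphi_\mu = \mu\varphi_\mu$, so that $-\triangle v = \sum_\mu(-v_\mu'' + \mu v_\mu)\varphi_\mu$. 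For each mode the equation $-v_\mu'' + \mu v_\mu = w_\mu$ is a constant-coefficient ODE on the interval $I$; for $\mu \geq 3$ (i.e., $\sqrt{\mu}\geq\sqrt 3 > \alpha_3$, and for the next eigenvalue $\sqrt{8}>\alpha_2$) the homogeneous solutions $e^{\pm\sqrt\mu\, t}$ decay, against the weight $\eta^{\alpha_2}$, faster from the two ends than is needed, so one gets a bounded solution operator uniformly in $\lambda$ by the standard variation-of-parameters kernel with exponential decay. The subtlety is the zero mode $\mu=0$ and the four first modes $\mu=3$ attached to $\omega_i$: for these the boundary/interior conditions in $\Psi$ and the $\Omega_C$-integrals are exactly the data that pin down the otherwise-unconstrained low-frequency behaviour.

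Concretely, I would organize it as follows. First, for the $\Psi$-part of the range: since $\Psi$ projects off the span of $1,\omega_1,\dots,\omega_4$, and the target of the $\triangle$-component already controls all modes, the conditions $\Psi(\partial_t v)|_{\Omega_L}=v_L$, $\Psi(\partial_t v)|_{\Omega_R}=v_R$ only involve the modes $\mu\geq 8$; for those the general solution of $-v_\mu''+\mu v_\mu = w_\mu$ on $I$ has a two-parameter homogeneous part $c_+e^{\sqrt\mu(t-\log\delta)} + c_-e^{-\sqrt\mu(t-\log\lambda+\log\delta)}$ and I would solve for $c_\pm$ from the two Neumann-type boundary values $v_L,v_R$; uniform invertibility follows because the cross-terms (one exponential evaluated at the far end) are exponentially small in the length of the cylinder, so the $2\times 2$ system is a small perturbation of a diagonal one, uniformly in $\lambda$. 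Second, for the zero mode and the $\omega_i$-modes: the $\triangle$-component forces $-v_0'' = w_0$ and $-v_{\omega_i}'' + 3 v_{\omega_i} = w_{\omega_i}$, whose solution spaces are, respectively, $2$-dimensional (affine functions of $t$) and $2$-dimensional ($e^{\pm\sqrt3 t}$); the remaining conditions $\int_{\Omega_C}\partial_t v=0$, $\int_{\Omega_C}(\partial_t v)\cdot\omega_i=0$, $\int_{\Omega_C}\partial_t(\partial_t v)\cdot\omega_i=0$ together with the normalization $\int_{\Omega_C}v\,d\omega=0$ built into $\mathcal X_1$ furnish exactly the right number of scalar conditions ($1+1$ for the zero mode via the $\mathcal X_1$-constraint and the first $\Omega_C$-integral; $2$ for each $\omega_i$-mode via the two $\omega_i$-integrals) to determine these constants; I would check the resulting small linear systems are uniformly nonsingular, again using that the relevant exponentials are evaluated at the center $\frac12\log\lambda$, equidistant from both ends, so the entries and their inverses are controlled independently of $\lambda$. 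Injectivity is the same computation with zero data: each mode's homogeneous ODE solution is killed by its matching conditions.

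The remaining bookkeeping is to confirm the norms match: one must check that the solution $v$ produced mode-by-mode actually lies in $C^{\alpha_1}$ with the weighted $\mathcal X_1$-norm controlled by $\|w\|_{\mathcal X_3} + \|v_L\|_{C^{\alpha_2}(\Omega_L)} + \|v_R\|_{C^{\alpha_2}(\Omega_R)} + \lambda^{-\alpha_2/2}(|b_0| + \sum|a_i| + \sum|b_i|)$. For the infinitely many high modes this is where the gap $\alpha_2 < \sqrt 8$ and $\alpha_3 < \sqrt 3$ is used: the Green's kernel for $-\partial_t^2 + \mu$ against a source decaying like $\eta^{\alpha_2}$ produces a solution decaying like $\eta^{\alpha_2}$ (not slower) precisely because $\alpha_2 < \sqrt\mu$ for all $\mu$ appearing beyond the explicitly-handled low modes, and the elliptic gain from $C^{\alpha_3}$ to $C^{\alpha_1}$ is the standard interior Schauder estimate on the unit subcylinders $\Omega_{[t]}$, which is scale-invariant and hence uniform in $\lambda$; the factor $\lambda^{-\alpha_2/2}$ in $\|\cdot\|_{\mathcal Y}$ is exactly $\eta(\tfrac12\log\lambda)^{-\alpha_2}$ up to a constant, so the finite-dimensional constants at the center are normalized to be comparable to the weighted norm of $v$ there. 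I expect the main obstacle to be precisely this uniform-in-$\lambda$ control: one has to be careful that no constant blows up as the cylinder length $\to\infty$, and the cleanest way to see this is to note that every exponential that could cause trouble is either a decaying one evaluated where it is small, or is tied to the center $\Omega_C$ which is at distance $\sim\frac12|\log\lambda|$ from both ends, so all the finite linear algebra has condition number bounded independent of $\lambda$. The high-mode estimate and the Schauder step are routine; the low-mode linear algebra is where the specific choice of normalization conditions \eqref{eqn:gg1}–\eqref{eqn:gg2} reveals its purpose.
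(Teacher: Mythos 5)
Your proposal is correct in its essentials and arrives at the result by a route that overlaps substantially with the paper's, but is organized differently. The paper separates the argument into two pieces: surjectivity is handled by solving a Neumann problem for $-\triangle u = v$ (with the boundary datum on $\Omega_L$ shifted by a constant to enforce the divergence-theorem compatibility, which is invisible to $\Psi$) and then correcting by an explicit linear combination of $(t-\tfrac12\log\lambda)$, $e^{\sqrt3 t}\omega_i$, $e^{-\sqrt3(t-\log\lambda)}\omega_i$ to meet the nine center conditions; the a priori estimate (injectivity plus bounded inverse) is outsourced to Lemma~\ref{lem:neumann}, whose proof combines a mode-by-mode decay theorem (Theorem~\ref{thm:linear}) with a genuine $L^2$ energy estimate via integration by parts and Poincar\'e, followed by Fourier expansion of the residual harmonic piece. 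You instead work mode-by-mode from the outset: you note (correctly) that the $\Psi$-boundary conditions see only modes $\mu\geq 8$, each of which is handled by a well-conditioned $2\times 2$ system thanks to the exponential gap; the modes $\mu=0$ and $\mu=3$ have no $\Psi$-boundary data but are pinned down instead by the $\mathcal X_1$-normalization and the $\Omega_C$-integral conditions (using the ODE to convert the second-derivative condition at $\Omega_C$ into a function value). This has the advantage of making completely transparent why the specific normalizations \eqref{eqn:gg1}--\eqref{eqn:gg2} are the right ones, and it sidesteps the Neumann compatibility issue entirely since the zero mode is determined at $\Omega_C$ rather than at the ends. The trade-off is that summing infinitely many mode estimates into a uniform weighted H\"older bound requires some care: Fourier coefficients of $C^{\alpha_3}$ data do not decay fast enough to sum $C^{\alpha_1}$ norms directly, so one should, as the paper does, first obtain an $L^\infty$ or $L^2$ weighted bound from the mode sum (this is exactly what Theorem~\ref{thm:linear} packages) and only then apply Schauder estimates on unit subcylinders (including boundary Schauder near $\Omega_L$ and $\Omega_R$, not just interior). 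You gesture at this but should make the two-step structure explicit; once that is done your argument is sound and fully uniform in $\lambda$.
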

\begin{lem}
	\label{lem:perturb}
	For some sufficiently small $\epsilon_0$, there exists $C(\epsilon_0)>0$ such that if $\norm{A}_{\mathcal X_2}\leq C_2\epsilon_0$ (for the $C_2$ in (E4)),
	\[
		\norm{D_1\Phi|_{(0,A)}-D_1\Phi|_{(0,0)}}_{\mathcal X_1;\mathcal Y}\leq C(\epsilon_0).
	\]
	Here $\norm{\cdot}_{\mathcal X_1;\mathcal Y}$ is the norm of bounded linear operators.
	Moreover, $C(\epsilon_0)$ can be as small as we need if $\epsilon_0$ is chosen to be small.
\end{lem}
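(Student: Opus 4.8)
\emph{Proof plan.} The plan is to write the operator $L:=D_1\Phi|_{(0,A)}-D_1\Phi|_{(0,0)}$ out explicitly and to bound each of its six components in the corresponding factor of the norm \eqref{eqn:normy} by $C\epsilon_0\norm{v}_{\mathcal X_1}$, using only that every component is bilinear in the pair $(A,v)$, together with the elementary fact that $\eta(t)\le 2\delta$ for all $t$ in the range of $\Omega$ (there $e^t\le\delta$ and $\lambda e^{-t}\le\delta$). Subtracting the two formulas displayed just before the statement, one gets, for $v\in\mathcal X_1$,
\[
	Lv=\Big(d^*(Av-vA),\ \Psi(\rho)|_{\Omega_L},\ \Psi(\rho)|_{\Omega_R},\ \int_{\Omega_C}\rho,\ \int_{\Omega_C}\rho\cdot\omega_i,\ \int_{\Omega_C}\partial_t\rho\cdot\omega_i\Big),
\]
where $\rho:=A(\partial_t)v-vA(\partial_t)$ and $Av-vA$ is the one-form $X\mapsto A(X)v-vA(X)$. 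So it suffices to bound $d^*(Av-vA)$ in $\mathcal X_3$, to bound $\Psi(\rho)|_{\Omega_L},\Psi(\rho)|_{\Omega_R}$ in $C^{\alpha_2}(\Omega_L),C^{\alpha_2}(\Omega_R)$, and to bound the last three $\g$-valued quantities, after division by $\lambda^{\alpha_2/2}$, each by $C\epsilon_0\norm{v}_{\mathcal X_1}$.

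The engine is the multiplication estimate on the unit slabs $\Omega_{[t]}$, which are mutually isometric: $C^{\alpha_2}(\Omega_{[t]})=C^{1,\alpha_3}(\Omega_{[t]})$ is a Banach algebra with a constant independent of $t$ and $\lambda$, and $\mathcal X_1\hookrightarrow C^{\alpha_1}=C^{2,\alpha_3}\hookrightarrow C^{\alpha_2}$. Hence on each slab $\norm{Av-vA}_{C^{\alpha_2}(\Omega_{[t]})}$ and $\norm{\rho}_{C^{\alpha_2}(\Omega_{[t]})}$ are both $\le C\norm{A}_{C^{\alpha_2}(\Omega_{[t]})}\norm{v}_{C^{\alpha_2}(\Omega_{[t]})}\le C\norm{A}_{\mathcal X_2}\norm{v}_{\mathcal X_1}\eta^{2\alpha_2}(t)$. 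Applying $d^*$ costs one derivative, so $\norm{d^*(Av-vA)}_{C^{\alpha_3}(\Omega_{[t]})}\le C\norm{A}_{\mathcal X_2}\norm{v}_{\mathcal X_1}\eta^{2\alpha_2}(t)$; multiplying by $\eta^{-\alpha_2}(t)$, taking $\sup_t$, and using $\eta\le 2\delta$ gives $\norm{d^*(Av-vA)}_{\mathcal X_3}\le C(2\delta)^{\alpha_2}\norm{A}_{\mathcal X_2}\norm{v}_{\mathcal X_1}\le C(2\delta)^{\alpha_2}C_2\epsilon_0\norm{v}_{\mathcal X_1}$. For the boundary components, restrict $\rho$ to the slab $\Omega_{[\log\delta-1]}$ adjoining $\Omega_L$ (and correspondingly to the slab $\Omega_{[\log\lambda-\log\delta]}$ for $\Omega_R$), where $\eta\le C\delta$; since $\Psi$ is a bounded projection on $C^{\alpha_2}(S^3)$, these components are $\le C\delta^{2\alpha_2}C_2\epsilon_0\norm{v}_{\mathcal X_1}$. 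For the three $\Omega_C$-components, $\eta(\tfrac12\log\lambda)=2\sqrt\lambda$, so $\sup_{\Omega_C}\abs{\rho}$ and $\sup_{\Omega_C}\abs{\partial_t\rho}$ are both $\le C\norm{A}_{\mathcal X_2}\norm{v}_{\mathcal X_1}\lambda^{\alpha_2}$ — for $\partial_t\rho$ one uses the $t$-derivative of $A$ afforded by $A\in C^{\alpha_2}$ and the two $t$-derivatives of $v$ afforded by $v\in C^{\alpha_1}$ — and division by $\lambda^{\alpha_2/2}$ leaves a factor $C\lambda^{\alpha_2/2}C_2\epsilon_0\norm{v}_{\mathcal X_1}\le CC_2\epsilon_0\norm{v}_{\mathcal X_1}$ since $\lambda$ is small. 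Summing the six bounds, $\norm{L}_{\mathcal X_1;\mathcal Y}\le C(\delta,C_2)\epsilon_0=:C(\epsilon_0)$, which tends to $0$ as $\epsilon_0\to0$, as required.

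The step that needs genuine care — and it is bookkeeping, not a real obstacle — is the weight accounting: each bilinear term carries a factor $\eta^{2\alpha_2}(t)$ (one $\eta^{\alpha_2}$ from $\norm{A}_{\mathcal X_2}$ and one from $\norm{v}_{\mathcal X_1}$), while the relevant target norm divides out only one such factor (or one power $\lambda^{\alpha_2/2}$ at $\Omega_C$), so one must check that the residual power of $\eta$, respectively of $\lambda$, is bounded uniformly on $\Omega$ — which is exactly $\eta(t)\le 2\delta$ together with the smallness of $\lambda$. Among the six terms, $\int_{\Omega_C}\partial_t\rho\cdot\omega_i$ is the one that actually consumes the full regularity $\mathcal X_1\subset C^{\alpha_1}$ of the domain space (two $t$-derivatives on $v$), so it is worth writing out carefully; the rest follow mechanically from the Banach-algebra property of $C^{1,\alpha_3}$ on the fixed-size slabs and the boundedness of $\Psi$. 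Finally I would note that, combined with Lemma \ref{lem:iso} and a Neumann series, this makes $D_1\Phi|_{(0,A'_\tau)}$ an isomorphism for the $A'_\tau$ of (E4), which is precisely what the implicit function theorem argument in the proof of the Claim requires.
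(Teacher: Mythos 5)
Your proof is correct, and it is exactly the fleshed-out version of the argument the paper omits: the paper drops the proof with the one-line hint that "the factor $\lambda^{\alpha_2/2}$ in the definition of the norm of $\mathcal Y$ fits well with the decay in the definition of $\norm{\cdot}_{\mathcal X_2}$," and your weight accounting (one factor $\eta^{\alpha_2}$ from $\norm{A}_{\mathcal X_2}$, one from $\norm{v}_{\mathcal X_1}$, with $\eta^{\alpha_2}\le(2\delta)^{\alpha_2}$ absorbed into the constant and $\eta(\tfrac12\log\lambda)^{2\alpha_2}/\lambda^{\alpha_2/2}=O(\lambda^{\alpha_2/2})$ at $\Omega_C$) is precisely what that hint refers to. The only tiny imprecision is the remark that $\int_{\Omega_C}\partial_t\rho\cdot\omega_i$ consumes both $t$-derivatives of $v$; in fact $\partial_t\rho$ involves only one derivative of $v$, so the $\sup$-bound needs only $v\in C^1$ — the full $C^{\alpha_1}$ regularity is available but not strictly used there.
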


Lemma \ref{lem:iso} and Lemma \ref{lem:perturb} shows that if $\epsilon_0$ is chosen small, then the implicit function theorem for $\Phi$ at $(0,A'_\tau)$ proves the claim. The proof of Lemma \ref{lem:iso} is postponed to the next subsection and the proof of Lemma \ref{lem:perturb} is omitted, because it is elementary and it suffices to notice that the factor $\lambda^{\alpha_2/2}$ in the definition of the norm of $\mathcal Y$ fits well with the decay in the definition of $\norm{\cdot}_{\mathcal X_2}$.

The gauge transformation that takes $A_\tau$ to $A'_\tau$ pulls back this neighborhood of $A'_\tau$ to $A_\tau$. Hence, to finish the proof of the openness of $E$, we still need
\begin{lem}\label{lem:ae}
	There is some universal constant $\epsilon_1>0$.
	Suppose that $A$ is a connection form in $\mathcal X_2$ satisfying (E1-3) and
	\begin{equation}
		\norm{A}_{\mathcal X_2}\leq \epsilon_1.
		\label{eqn:ae1}
	\end{equation}
	If 
	\begin{equation}
		\norm{F_A}_{C^{\alpha_2}(\Omega([t]))}\leq C_1 \epsilon_0\eta^{\alpha_2}(t).
		\label{eqn:fa}
	\end{equation}
	then for some universal constant $C_2$ we have
	\[
		\norm{A}_{\mathcal X_2}\leq C_2\epsilon_0.
	\]
\end{lem}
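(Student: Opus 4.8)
The plan is to run a weighted elliptic estimate on $\Omega$, after writing $A=a\,dt+\mathbf a$ with $a:=A(\partial_t)$ a matrix-valued function and $\mathbf a$ a $t$-dependent one form on $S^3$, and then decomposing both along the eigenspaces of $\triangle_{S^3}$ and $\triangle_{h;S^3}$ from Section \ref{sub:diff}. Three identities drive everything. First, $d^*A=0$ reads $-\partial_t a+d^*_{S^3}\mathbf a=0$ on the cylinder. Second, $F_A=dA+A\wedge A$ gives, for its pure-$S^3$ part, $d_{S^3}\mathbf a=(F_A-A\wedge A)^{S^3}$. Third, since $d^*A=0$ we have $\triangle A=d^*(F_A-A\wedge A)$, whose $dt$-component is $-\partial_t^2 a+\triangle_{S^3}a=G_0$ with $G_0:=\big(d^*(F_A-A\wedge A)\big)(\partial_t)$. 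As $\alpha_2\in(\sqrt3,2)\subset(1,2)$, \eqref{eqn:fa} controls $F_A$ in $C^{1,\alpha_3}$ on each $\Omega_{[t]}$, so both $\norm{G_0}_{C^{\alpha_3}(\Omega_{[t]})}$ and $\norm{(F_A-A\wedge A)^{S^3}}_{C^{\alpha_2}(\Omega_{[t]})}$ are at most $C\big(C_1\epsilon_0+\norm{A}_{\mathcal X_2}^2\big)\eta^{\alpha_2}(t)$; the quadratic contribution is harmless because $\eta$ is bounded on $\Omega$.

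Next I would estimate $A$ mode by mode. The $S^3$-constant mode $a_0$ of $a$ satisfies $\partial_t a_0=0$ (a codifferential has no constant mode), hence $a_0\equiv0$ by the first relation in (E3). The $\omega_i$-modes $a_i(t)$ of $a$ satisfy $-\ddot a_i+3a_i=(G_0)_i$ with Cauchy data $a_i(\frac12\log\lambda)=\dot a_i(\frac12\log\lambda)=0$ supplied by (E3); solving by variation of parameters against $e^{\pm\sqrt3\,t}$, and using $\sqrt3<\alpha_2$ — this is precisely where $\alpha_1>\sqrt3+1$ is needed — one gets $\abs{a_i(t)}+\abs{\dot a_i(t)}\le C\big(C_1\epsilon_0+\norm{A}_{\mathcal X_2}^2\big)\eta^{\alpha_2}(t)$ uniformly up to both ends of $\Omega$, the point being that $e^{\sqrt3\,t}$ is dominated by $e^{\alpha_2 t}\sim\eta^{\alpha_2}$ away from the centre while at the centre both are comparable to $\lambda^{\alpha_2/2}$. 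The $\psi_i$-modes $b_i$ of $\mathbf a$ are then read off from the $\omega_i$-part of $d^*A=0$ (with $d^*_{S^3}\psi_i=3\omega_i$, $d^*_{S^3}\phi_{\pm,i}=0$): $b_i=\frac13\dot a_i$; and the $\phi_{\pm,i}$-modes $c_{\pm,i}$ of $\mathbf a$ from the eigenvalue-$4$ part of $d_{S^3}\mathbf a=(F_A-A\wedge A)^{S^3}$: since $d\psi_i=0$ and $\set{d\phi_{\pm,i}}$ is a basis of the eigenvalue-$4$ two forms on $S^3$, the vector $\big(c_{\pm,i}(t)\big)$ is a bounded linear image of $(F_A-A\wedge A)^{S^3}(t)$, so $\abs{c_{\pm,i}(t)}\le C\big(C_1\epsilon_0+\norm{A}_{\mathcal X_2}^2\big)\eta^{\alpha_2}(t)$.

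The remaining, and hardest, part is infinite-dimensional: the components of $a$ and $\mathbf a$ in eigenvalue $\ge8$. Write $a^\perp$ for the high part of $a$. The decisive observation is that (E2) asserts precisely that $a^\perp$ vanishes on $\Omega_L$ and on $\Omega_R$, since $\Psi$ is the projection off $1,\omega_1,\dots,\omega_4$; so $a^\perp$ solves $-\partial_t^2 a^\perp+\triangle_{S^3}a^\perp=G_0^\perp$ with homogeneous Dirichlet data. Testing against $a^\perp\eta^{-2\alpha_2}$ and integrating over $\Omega$, the boundary terms drop, and since $\triangle_{S^3}\ge8$ while $\alpha_2<2$ (so $2\alpha_2^2<8$) the weighted quadratic form is coercive after absorbing the first-order errors from $\partial_t(\eta^{-2\alpha_2})$ (for which $\abs{\dot\eta}\le\eta$); this gives $\int_I\norm{a^\perp(t)}_{L^2(S^3)}^2\eta^{-2\alpha_2}(t)\,dt\le C\big(C_1\epsilon_0+\norm{A}_{\mathcal X_2}^2\big)^2$. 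For the high part $\mathbf a^\perp$ of $\mathbf a$ there is no such Dirichlet data, so instead I would use that on each slice $d_{S^3}\mathbf a^\perp=\Pi_{\ge8}\big((F_A-A\wedge A)^{S^3}\big)$ and $d^*_{S^3}\mathbf a^\perp=\partial_t a^\perp$ (the high parts of the first two identities); since $H^1(S^3)=0$, the Hodge estimate controls $\norm{\mathbf a^\perp(t)}$ by these data, hence by $C\big(C_1\epsilon_0+\norm{A}_{\mathcal X_2}^2\big)\eta^{\alpha_2}(t)$. Interior Schauder estimates on the unit subcylinders $\Omega_{[t]}$, applied to the second-order equations for $A$ and for $a^\perp$ and using that $\eta$ varies by a bounded factor across $\Omega_{[t]}$, then upgrade all of the above to the weighted $C^{\alpha_2}$ bound.

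Summing the mode contributions yields $\norm{A}_{\mathcal X_2}\le C_3\big(C_1\epsilon_0+\norm{A}_{\mathcal X_2}^2\big)$; since $\norm{A}_{\mathcal X_2}\le\epsilon_1$ by \eqref{eqn:ae1}, taking $\epsilon_1$ small enough that $C_3\epsilon_1\le\frac12$ lets one absorb the quadratic term and conclude $\norm{A}_{\mathcal X_2}\le2C_1C_3\,\epsilon_0=:C_2\epsilon_0$. The main obstacles are the uniform-in-$\lambda$ control of the $\omega_i$-modes near $\Omega_L,\Omega_R$, which forces the restriction $\alpha_2>\sqrt3$, and the weighted energy estimate for the infinitely many high modes; the quadratic absorption and the bookkeeping of $\epsilon_0,\epsilon_1,C_1,C_2,C_3$ are routine.
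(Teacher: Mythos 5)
Your overall plan—split $A=a\,dt+\mathbf a$, use $d^*A=0$ to couple $a$ with $d^*_{S^3}\mathbf a$, extract the scalar Poisson equation for $a$, estimate $\mathbf a$ slice by slice via the Hodge decomposition on $S^3$, and close by absorbing the quadratic term—is the same as the paper's. Your treatment of the low (finite-dimensional) modes is also essentially the paper's: the constant mode vanishes by $\int_{\Omega_C}a=0$ and $\partial_t a_0\equiv0$, the $\omega_i$-modes satisfy an explicit second-order ODE with vanishing Cauchy data at $\Omega_C$ (from (E3)), and the restriction $\alpha_2>\sqrt3$ is exactly what makes the variation-of-parameters solution stay below $\eta^{\alpha_2}$ near both ends. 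That part is correct, and it mirrors Steps~2--3 of Lemma~\ref{lem:dirichlet}.

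The gap is in your treatment of the high ($\geq 8$) modes. You claim that testing the equation against $a^\perp\eta^{-2\alpha_2}$ gives
\[
\int_I\norm{a^\perp(t)}^2_{L^2(S^3)}\eta^{-2\alpha_2}(t)\,dt\leq C\big(C_1\epsilon_0+\norm{A}_{\mathcal X_2}^2\big)^2,
\]
but this inequality is false even under the conclusion you are trying to prove. If $\norm{a^\perp(t)}_{L^2}\leq C\eta^{\alpha_2}(t)$, then the left-hand side is already of size $C^2\abs{I}\sim C^2\log(1/\lambda)$, which is unbounded as $\lambda\to0$; the same logarithmic divergence appears on the right of the energy identity, since $\int_\Omega G_0^\perp\,a^\perp\,\eta^{-2\alpha_2}$ cannot be absorbed into the coercive quadratic form without the factor $\abs{I}$. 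Coercivity with $8-\alpha_2^2>0$ controls the \emph{integrand}, not the length of the cylinder, so the weighted energy method as written does not deliver a bound that is uniform in $\lambda$. Moreover, even a corrected global weighted $L^2$ bound (allowing the $\log$ factor) would not feed the interior Schauder step: Schauder on $\Omega_{[t]}$ needs a per-slice $L^2$ bound $\norm{a^\perp}_{L^2(\Omega_{[t]})}\lesssim\eta^{\alpha_2}(t)$, which the global integral does not provide. The paper sidesteps this by not doing a weighted energy estimate at all for the source term: Lemma~\ref{lem:dirichlet} first invokes Theorem~\ref{thm:linear} to produce a particular solution $\tilde u$ with the pointwise decay $\abs{\tilde u}\leq C\norm{v}_{\mathcal X_3}\eta^{\alpha_2}$ (this is the mode-by-mode construction with explicit convolution kernels, where the eigenvalue gap $\sqrt8>\alpha_2$ is what makes the infinite sum over high modes converge uniformly in $t$); it cuts $\tilde u$ off in the interior, leaving a residual source supported near the two ends, and only then runs an $L^2$ energy identity—on a problem whose source has $\lambda$-independent $L^2$ norm—followed by the eigenfunction expansion of the now-harmonic interior. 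You need some substitute for that particular-solution step; the direct weighted energy is not it.
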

Notice that this last inequality is (E4) and that the assumption \eqref{eqn:fa} is guaranteed by \eqref{eqn:ftau} along the continuity path. Moreover, the constant $\epsilon_0$ in Theorem \ref{thm:neckgauge} is determined both by requiring $C_2\epsilon_0< \epsilon_1$ and by Lemma \ref{lem:perturb}.

\begin{rem}
	Ideally, given a $C^{\alpha_2}$ bound of curvature, one would expect a control over $C^{\alpha_1}$ norm of $A$. As can be seen from the following proof, this is indeed the case. However, we simply do not need that much.
\end{rem}

\begin{proof}
	Let 
	\[
		A= f(x,t) dt + \xi(x,t)
	\]
	where $f$ is a function and $\xi$ is a one-form satisfying $\xi(\partial_t)\equiv 0$.
	It follows from $d^*A=0$ that
	\begin{equation}
		-\partial_t f + d^*_{S^3} \xi =0.
		\label{eqn:dSA}
	\end{equation}
	By setting $h=F_A- [A\wedge A]$, we have $dA=h$. On the other hand,
	\begin{equation}\label{eqn:dxi}
		dA= d_{S^3}f \wedge dt - \partial_t \xi \wedge dt + d_{S^3} \xi,
	\end{equation}
which implies that
\begin{equation}
	\partial_t \xi - d_{S^3} f =\iota_{\partial_t} h.
		\label{eqn:dAh}
\end{equation}
Summing the $\partial_t$ of \eqref{eqn:dSA} and the $d^*_{S^3}$ of \eqref{eqn:dAh}, we obtain
\[
	\partial_t^2 f + \triangle_{S^3} f =  d^*_{S^3} (\iota_{\partial_t} h).
\]
The boundary condition of $A$ in (E2) and (E3) translates into
\[
	\Psi(f|_{\Omega_R})=\Psi(f|_{\Omega_R})=0
\]
and
\[
	\int_{\Omega_C} f = \int_{\Omega_C} f\cdot \omega_i = \int_{\Omega_C} \partial_t f\cdot \omega_i=0.
\]
Integrating \eqref{eqn:dSA} over $\Omega_C$, we observe that $\int_{\Omega_C} \partial_t f =0$.
By the definition of $h$,
\[
	\norm{d^*_{S^3} (\iota_{\partial_t} h)}_{\mathcal X_3} \leq C \norm{A}_{\mathcal X_2}^2 + C \norm{F_A}_{\mathcal X_2}.
\]
It then follows form Lemma \ref{lem:dirichlet} that
\begin{equation}
	\norm{f}_{\mathcal X_1} \leq C \norm{A}_{\mathcal X_2}^2 + C\norm{F_A}_{\mathcal X_2}.
	\label{eqn:goodf}
\end{equation}

Next, consider the restriction of the two form $dA=h$ to $S^3$ (in \eqref{eqn:dxi}), we obtian $d_{S^3} \xi= h|_{S^3}$. Together with \eqref{eqn:dSA} and \eqref{eqn:goodf}, we have
\[
	\norm{d^*_{S^3} \xi}_{\mathcal X_2} + \norm{d_{S^3}\xi}_{\mathcal X_2} \leq C\norm{A}_{\mathcal X_2}^2 +C\norm{F_A}_{\mathcal X_2}.
\]
Since there is no harmonic one forms on $S^3$, the elliptic estimate implies that $\norm{\xi}_{\mathcal X_1}\leq C\norm{A}_{\mathcal X_2}^2+ C\norm{F_A}_{\mathcal X_2}$. (The $\partial_t \xi$ part is estimated via \eqref{eqn:dAh} and \eqref{eqn:goodf}.) Together with \eqref{eqn:goodf}, we obtain
\[
	\norm{A}_{\mathcal X_1} \leq C \norm{A}_{\mathcal X_2}^2 + C \norm{F_A}_{\mathcal X_2}.
\]
The proof is done by taking $\epsilon_1$ small.
\end{proof}

\subsection{The linear isomorphism}\label{sub:iso}
In this section, we provide the proof of Lemma \ref{lem:iso}. The essential part of the proof is an elliptic estimate for Poisson equation on the long cylinder, whose proof can be found in the appendix.

\begin{proof}[Proof of Lemma \ref{lem:iso}]
	For simplicity, we write $\mathcal T$ for $D_1\Phi|_{(0,0)}$.
	It is trivial to check that $\mathcal T$ maps a function in $\mathcal X_1$ to $\mathcal Y$ and that $\mathcal T$ is a bounded linear map. To see this, we shall use the fact that $\Psi$ is a bounded linear map from $C^{\alpha_2}(S^3)$ to itself.

	{\bf Step 1.} $\mathcal T$ is a surjective. Given any fixed $(v,v_L,v_R,b_0,a_i,b_i)\in \mathcal Y$, we apply the solvability of boundary value problem of Neumann boundary condition on manifold with boundary. 
Precisely, we solve 
	\[
		\left\{
			\begin{array}[]{l}
				- \triangle u =v \\
				\partial_t u |_{\Omega_L}=v_L - \frac{1}{\abs{S^3}} \int_{\Omega} v (dtd\omega)\\
				\partial_t u |_{\Omega_R}=v_R \\
			\end{array}
			\right.
	\]
	for a function $u$ satisfying $\int_{\Omega_C} u =0$ (as required in the definition of $\mathcal X_1$).
	The boundary conidition on $\Omega_L$ is modified so that the following compatibility condition holds
	\[
		\int_\Omega - v (dtd\omega)= \int_{\Omega_L} \partial_t ud\omega - \int_{\Omega_R} \partial_t ud\omega.
	\]
	In verifying the above equation, we have used the fact that by the definition of $C^{\alpha_2}_\Psi(S^3)$,
	\[
		\int_{\Omega_L} v_L d\omega = \int_{\Omega_R}v_R d\omega =0.
	\]
	The point is that $\Psi(\partial_t u|_{\Omega_L})=v_L$ and $\Psi(\partial_t u|_{\Omega_R})=v_R$ are as required. 

	Next, we modify $u$ so that the rest $9$ conditions hold. Notice that this can be achieved while keeping $\int_{\Omega_C}u=0$, $\Psi(\partial_t u|_{\Omega_L})=v_L$ and $\Psi(\partial_t u|_{\Omega_R})=v_R$. To see this, it suffcies to add a linear combination of
	\begin{equation}
		(t-\frac{1}{2}\log \lambda), e^{\sqrt{3}t}\omega_i, e^{-\sqrt{3}(t-\log \lambda)}\omega_i.
		\label{eqn:combine}
	\end{equation}

	{\bf Step 2.} For any $u\in \mathcal X_1$, we claim that
	\[
		\norm{u}_{\mathcal X_1} \leq C \norm{\mathcal T(u)}_{\mathcal Y}.
	\]
	It follows from this inequality that $\mathcal T$ is injective and its inverse is a bounded linear operator.

For the proof of the claim, if $\mathcal T(u)=(v,v_L,v_R,b_0,a_i,b_i)$, we need to show
\begin{equation}
	\norm{u}_{\mathcal X_1}\leq C \left( \norm{v}_{\mathcal X_3}+ \norm{v_L}_{C^{\alpha_2}(\Omega_L)}+ \norm{v_R}_{C^{\alpha_2}(\Omega_R)} + \frac{\abs{b_0} + \sum_{i=1}^4 (\abs{a_i}+\abs{b_i})}{\lambda^{\alpha_2/2}} \right).
	\label{eqn:step3}
\end{equation}
This is exactly Lemma \ref{lem:neumann}.
\end{proof}

\section{Asymptotic Analysis of the connection}
\label{sec:asymptotic}
Let $D_k$ and $\Omega_k$ be as in the introduction. Since $D_k$ is Yang-Mills, the decay of curvature given in \eqref{eqn:rade} allows us to apply Theorem \ref{thm:neckgauge} to get a Coulomb gauge over $\Omega_k$. The first aim of this section is to use the Yang-Mills equation to improve our understanding of $A_k$. More precisely, we prove $A_k$ has an expansion of the form \eqref{eqn:improveA}, in which the bound of the error term is independent of $k$. 

In the second part, we consider the limit $k\to \infty$ and show how the coefficients in the expansion of $A_k$ on the neck $\Omega_k$ determine the curvature of $D_\infty$ and $D_b$. This is used in the next section to prove Theorem \ref{thm:main}.

\subsection{Expansion of $A$}
In this subsection, for simplicity, we surpress the subscript $k$. 
By Theorem \ref{thm:neckgauge}, we have a global trivialization in which $D=d+A$ satisfying \eqref{eqn:gg1}-\eqref{eqn:gg3}.
The Yang-Mills equation becomes
\begin{equation}
	\triangle_h A + dA\# A+ A\#A\#A= 0 \qquad \text{on} \qquad \Omega.
	\label{eqn:localym}
\end{equation}
Here $\triangle_h$ is the Hodge Laplace for one forms on the cylinder and we have used $d^*A=0$.

By the definition of $\norm{\cdot}_{\mathcal X_2}$, we have
\begin{equation}
	\label{eqn:smallA}
	\norm{dA}_{C^{\alpha_3}(\Omega_{[t]})},\norm{A}_{C^{\alpha_2}(\Omega_{[t]})} \leq C\epsilon_0 \eta^{\alpha_2}, \quad \forall t\in I,
\end{equation}
which implies that 
\begin{equation}
	\norm{\triangle_h A}_{C^{\alpha_3}(\Omega_{[t]})}\leq C\epsilon_0 \eta^{\alpha_1}(t), \quad \forall t\in I.
	\label{eqn:hodgeA}
\end{equation}
Here we have used $2\alpha_2>\alpha_1$.

The main result of this section is
\begin{lem}
	\label{lem:improve} Suppose that $A$ is a one-form satisfying \eqref{eqn:smallA} and \eqref{eqn:hodgeA}.
	Then there is another one form $\tilde{A}$ such that

	(1) 
\[
\triangle_h \tilde{A}=\triangle_h A
\]
	and
	\begin{equation}
		\norm{\tilde{A}}_{C^{\alpha_1}(\Omega_{[t]})}\leq C\epsilon_0 \eta^{\alpha_1}(t), \quad \forall t\in [\log \lambda -\log\delta+2, \log\delta-2];
		\label{eqn:another}
	\end{equation}

	(2) Moreover, there are constant (matrics) $a,b,a_i,b_i,\tilde{a}_i,\tilde{b}_i$ such that
	\begin{equation}
		\begin{split}
		A-\tilde{A} & = \left( a+ bt+\sum_{i=1}^4 (\tilde{a}_i e^{\sqrt{3}t}+\tilde{b}_i e^{-\sqrt{3}(t-\log \lambda)})\omega_i \right) dt \\
		& + \sum_{i=1}^4 (a_i e^{\sqrt{3}t}+ b_i e^{-\sqrt{3}(t-\log \lambda)})\psi_i \\
		& + \sum_{i=1}^3 (c_{\pm,i}e^{2t}+ d_{\pm,i}e^{-2(t-\log \lambda)})\phi_{\pm,i}.
		\end{split}
		\label{eqn:expandA}
	\end{equation}
\end{lem}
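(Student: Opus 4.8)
The plan is to solve the inhomogeneous Hodge equation $\triangle_h \tilde A = \triangle_h A$ on the long cylinder by a direct construction, choosing $\tilde A$ to carry all of the ``fast-decaying'' part of the forcing term while the remainder $A - \tilde A$ lands in the kernel of $\triangle_h$, which is exactly the span of the explicit functions appearing in \eqref{eqn:expandA}. First I would recall from the computations in Section \ref{sub:diff} that on the round cylinder the Hodge Laplacian on one-forms decomposes according to the eigenspace decomposition of $\triangle_{h;S^3}$: writing a one-form as $f\,dt + \xi$ with $\iota_{\partial_t}\xi = 0$ and expanding $f$ in eigenfunctions of $\triangle_{S^3}$ and $\xi$ in the coclosed eigenforms $\psi_i, \phi_{\pm,i}$ (plus exact pieces $d_{S^3}g$), the operator $\triangle_h = -\partial_t^2 + \triangle_{h;S^3}$ becomes, mode by mode, an ODE $-g'' + \mu g = (\text{forcing})$. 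The homogeneous solutions for the mode with ``spatial eigenvalue'' $\mu$ are $e^{\pm\sqrt{\mu}\,t}$: for $\mu = 0$ (the constant mode of $f$) these are $1$ and $t$; for $\mu = 3$ (the $\omega_i$ mode of $f$ and the $\psi_i$ mode of $\xi$) they are $e^{\pm\sqrt3\,t}$; for $\mu = 4$ (the $\phi_{\pm,i}$ mode of $\xi$) they are $e^{\pm 2t}$. These are precisely the functions on the right-hand side of \eqref{eqn:expandA}, anchored at the two ends via the shifts $t$ and $t - \log\lambda$.

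Next I would construct $\tilde A$ mode by mode via the variation-of-parameters (Green's function) formula for $-g'' + \mu g = F$ on the interval $[\log\lambda - \log\delta, \log\delta]$, but using the \emph{decaying} Green's kernel: that is, for each mode I integrate the forcing against $e^{-\sqrt\mu|t-s|}$ (suitably normalized), which produces a particular solution that inherits the decay of the forcing rather than growing. Concretely, since the forcing $\triangle_h A$ obeys the weighted bound \eqref{eqn:hodgeA} with weight $\eta^{\alpha_1}$ and since $\alpha_1 < \sqrt8$ lies strictly below the spectral gap $\sqrt\mu$ for every mode \emph{except} the low modes $\mu \in \{0,3,4\}$ (where $\sqrt\mu \le 2 < \alpha_1$), the convolution estimate gives, for the high modes ($\mu \ge 8$, i.e. $\sqrt\mu > \alpha_1$), a gain: $\int e^{-\sqrt\mu|t-s|}\eta^{\alpha_1}(s)\,ds \lesssim \eta^{\alpha_1}(t)$, yielding $\|\tilde A\|_{C^{\alpha_1}(\Omega_{[t]})} \le C\epsilon_0 \eta^{\alpha_1}(t)$ on the slightly shrunk interval of \eqref{eqn:another} (the shrinkage by $2$ absorbs the one-unit-wide Hölder windows near the boundary). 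For the low modes $\mu \in \{0,3,4\}$ one cannot in general produce a decaying particular solution with the $\eta^{\alpha_1}$ bound, so I would instead solve those modes' ODEs with, say, zero boundary data at the two ends (or any fixed convention), and then $A - \tilde A$ restricted to those modes solves the homogeneous ODE, hence is a linear combination of $1, t, e^{\sqrt3 t}\omega_i, e^{-\sqrt3(t-\log\lambda)}\omega_i$ (for $f$), $e^{\sqrt3 t}\psi_i, e^{-\sqrt3(t-\log\lambda)}\psi_i$, and $e^{2t}\phi_{\pm,i}, e^{-2(t-\log\lambda)}\phi_{\pm,i}$ (for $\xi$) — exactly the shape of \eqref{eqn:expandA}, with the matrix coefficients $a,b,a_i,b_i,\tilde a_i,\tilde b_i, c_{\pm,i}, d_{\pm,i}$. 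Finally, since $\triangle_h\tilde A = \triangle_h A$ by construction and $\triangle_h$ commutes with the mode decomposition, adding the high-mode and low-mode contributions of $\tilde A$ gives a single one-form with property (1), and the high-mode estimate plus the fact that the low-mode part of $A - \tilde A$ is entirely the explicit term gives (2).

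The main obstacle I anticipate is making the mode-by-mode convolution estimate genuinely uniform in $\lambda$ — that is, controlling the Green's-kernel convolution against the two-ended weight $\eta(t) = e^t + \lambda e^{-t}$ simultaneously for the exponentially-growing-from-the-left and exponentially-growing-from-the-right behaviors, and checking that the ``bad'' low modes are exactly $\{0,3,4\}$ and no others intrude (this is where $\alpha_1 \in (\sqrt3 + 1, \sqrt8)$, equivalently $\alpha_3 = \alpha_1 - 2 \in (0,\sqrt3)$ and $\alpha_1 < \sqrt8$, is used: $\alpha_1$ must beat $\sqrt8$ from below so that $\mu = 8$ is already a ``good'' mode, and must exceed $2$ so that the $\mu = 4$ homogeneous solutions $e^{\pm2t}$ are the slowest-decaying pieces that must be peeled off). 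A secondary technical point is bookkeeping the exact-form piece $d_{S^3}g$ of $\xi$: here one uses $d^*A = 0$ together with the commutation of $\triangle_h$ with $d_{S^3}$ and $d_{S^3}^*$ to reduce it to a scalar Laplace equation on the cylinder with spatial eigenvalue $\mu \ge 3$ as well, so it contributes nothing new beyond the modes already listed. Summing the (absolutely convergent) series over all high modes, with constants uniform in $\lambda$, then completes the argument; I would cite the cylinder Poisson estimate from the appendix (used already for Lemma \ref{lem:iso}) to package the scalar-ODE convolution bounds cleanly.
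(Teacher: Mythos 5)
Your decomposition into $f\,dt + \xi$, the mode-by-mode reduction to the scalar ODE $-g'' + \mu g = F$, and the treatment of the high modes ($\mu \geq 8$) via the decaying kernel $e^{-\sqrt{\mu}\,|t-s|}$ all match the paper's scheme, which packages the mode-by-mode construction as Theorem \ref{thm:linear} in the appendix. The gap lies in your treatment of the low modes $\mu \in \{0, 3, 4\}$.

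You assert that ``one cannot in general produce a decaying particular solution with the $\eta^{\alpha_1}$ bound'' for the low modes and therefore propose to solve their ODEs with zero boundary data at $\Omega_L$ and $\Omega_R$. That choice makes \eqref{eqn:another} fail. Take $\mu = 4$ (the $\phi_{\pm,i}$ mode) as an example. The forcing near $\Omega_L$ is of size $\sim \epsilon_0\, \delta^{\alpha_1}$, and the Dirichlet Green's kernel for $-\partial_t^2 + 4$ transports it to $\Omega_C$ with factor $\sim e^{-2(\log\delta - \frac{1}{2}\log\lambda)} = \delta^{-2}\lambda$, so the zero-BC particular solution at $\Omega_C$ picks up a contribution of size $\sim \epsilon_0\, \delta^{\alpha_1 - 2}\lambda$, whereas \eqref{eqn:another} demands a bound $\lesssim \epsilon_0\, \eta^{\alpha_1}(\tfrac12\log\lambda) \sim \epsilon_0\, \lambda^{\alpha_1/2}$. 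Since $\alpha_1 > 2$, the ratio $\lambda^{1 - \alpha_1/2}\delta^{\alpha_1-2} \to \infty$ as $\lambda\to 0$, so the estimate fails by an unbounded factor. The same deficiency occurs for $\mu = 0$ and $\mu = 3$, with an even larger loss.

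In fact an $\eta^{\alpha_1}$-decaying particular solution exists for every mode, including the low ones. It must simply be anchored at $\Omega_C$ rather than at the ends; this is precisely Case 2 of the proof of Theorem \ref{thm:linear}. Writing $\lambda_i = \sqrt{\mu}$ and shifting so that $\Omega_C$ corresponds to $t=0$, the formula
\[
A_i(t) = e^{\lambda_i t}\int_0^t e^{-2\lambda_i s}\left(\int_0^s a_i(x)\,e^{\lambda_i x}\,dx\right)ds
\]
solves the forced ODE, vanishes at the center, and grows toward the ends at rate at most $e^{\alpha_1 |t|}$ (the estimate uses $\alpha_1 > \lambda_i$), hence obeys the $\eta^{\alpha_1}$ bound. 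With this replacement your argument recovers the paper's: \emph{every} mode of $\tilde{A}$ satisfies \eqref{eqn:another}, the harmonic remainder $H = A - \tilde{A}$ has an eigenmode expansion whose low-mode part is a combination of exactly the functions in \eqref{eqn:expandA}, and the high-mode part of $H$ (which decays at rate $\geq \sqrt{8} > \alpha_1$ and is bounded by $C\epsilon_0$) is absorbed back into $\tilde{A}$. Your identification of the low modes as $\{0,3,4\}$ and your use of $\alpha_1 \in (\sqrt{3}+1, \sqrt{8})$ are exactly right; the only missing ingredient is the correct particular solution for the low modes.
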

For the definition of $\psi_i$ and $\phi_{\pm,i}$, we refer to Section \ref{sub:diff}.

\begin{proof}
	Set
\[
	A= f(x,t) dt + \xi(x,t)
\]
where $f$ is a function on $\Omega$ and $\xi(t)$ is one form on $\set{t}\times S^3$ satisfying $\xi(\partial_t)=0$.
Direct computation gives
\begin{equation}
	\label{eqn:directcom}
	\triangle_h A = (- \triangle_{S^3} f - \partial_t^2 f) dt + \left( \triangle_{h;S^3}\xi - \partial_t^2 \xi \right).
\end{equation}
For the reader's convenience, we include the details of the computation in the appendix.

By applying Theorem \ref{thm:linear} with $L$ equals $-\triangle_{S^3}$ and $\triangle_{h;S^3}$ respectively, we obtain the existence of $\tilde{f}$ and $\tilde{\xi}$ satisfying
\[
	\triangle_h (\tilde{f} dt + \tilde{\xi}) = \triangle_h A
\]
and
\begin{equation}
	 \abs{\tilde{f}} + \abs{\tilde{\xi}}\leq C \eta^{\alpha_1}(t), \quad \forall t\in [\log\lambda-\log\delta,\log\delta].
	\label{eqn:tildesmall}
\end{equation}
By elliptic estimate, we obtain \eqref{eqn:another}.

For the proof of (2), we set $H=A-\tilde{A}$, which is a harmonic one form and can be written as 
\begin{equation}
	H:=f_H dt + \xi_H.
	\label{eqn:H}
\end{equation}
By the formula \eqref{eqn:directcom}, $f_H$ is a harmonic function, i.e. $\triangle f_H=0$ and $\xi_H$ satisfies
\[
	\partial_t^2 \xi_H - \triangle_{h;S^3} \xi_H =0.
\]
By the separation of variables, $f_H$ and $\xi_H$ have an expansion involving the eigenfunctions of $\triangle_{S^3}$ and $\triangle_{h;S^3}$. More precisely,
\[
	f_H(t)= a + bt + \sum_{i=1}^4 (\tilde{a}_i e^{\sqrt{3}t}+\tilde{b}_i e^{-\sqrt{3}(t-\log \lambda)})\omega_i + r_f 
\]
and
\[
	\xi_H(t)=\sum_{i=1}^4 (a_i e^{\sqrt{3}t}+ b_i e^{-\sqrt{3}(t-\log \lambda)})\psi_i +  \sum_{i=1}^3 (c_{\pm,i}e^{2t}+ d_{\pm,i}e^{-2(t-\log \lambda)})\phi_{\pm,i}+  r_\xi.
\]
Since $H$ is bounded by $C\epsilon_0$ (see \eqref{eqn:another} and \eqref{eqn:smallA}), the remainder terms $r_f$ and $r_\xi$, that are also harmonic, satisfy
\[
	\norm{r_f}_{C^{\alpha_1}([t])},\norm{r_\xi}_{C^{\alpha_1}([t])}\leq C\epsilon_0 \eta^{\alpha_1}(t).
\]
The above estimate also depends on the fact that $\alpha_1\in (2,\sqrt{8})$. The proof of (2) is concluded by choosing $\tilde{A}+( r_f dt + r_\xi)$ as $\tilde{A}$.
\end{proof}

In the trivialization obtained by Theorem \ref{thm:neckgauge}, the connection form satisfies \eqref{eqn:gg1} and \eqref{eqn:gg2}, in addition to $d^*A=0$. These help us in improving \eqref{eqn:expandA}.
\begin{cor}\label{cor:improveA}
	Let $D$ be a Yang-Mills connection on $\Omega$ satisfying \eqref{eqn:rade}. In the trivialization given by Theorem \ref{thm:neckgauge}, the connection is given by a one-form $A$, for which we have
	\begin{equation}
		A = \sum_{i=1}^3 (c_{\pm,i}e^{2t}+ d_{\pm,i} e^{-2(t-\log \lambda)}) \phi_{\pm,i} + r_A
		\label{eqn:improveA}
	\end{equation}
	where 
	\[
		\norm{r_A}_{C^{\alpha_1}(\Omega_{[t]})}\leq C \eta^{\alpha_1}(t)
	\]
	for all $t\in [\log \lambda -\log\delta +2, \log\delta -2]$.
\end{cor}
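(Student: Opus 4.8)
The plan is to combine Lemma \ref{lem:improve} with the normalization conditions \eqref{eqn:gg1} and \eqref{eqn:gg2} built into the gauge from Theorem \ref{thm:neckgauge}, showing that all of the non-decaying or slowly-decaying modes in the expansion \eqref{eqn:expandA} are forced to vanish, leaving only the $e^{\pm 2t}$ modes along $\phi_{\pm,i}$. First I would observe that the curvature bound \eqref{eqn:rade} (equivalently \eqref{eqn:newrade}) together with Theorem \ref{thm:neckgauge} gives $A\in\mathcal X_2$ with $\norm{A}_{\mathcal X_2}\le C\epsilon_0$, so \eqref{eqn:smallA} and \eqref{eqn:hodgeA} hold and Lemma \ref{lem:improve} applies. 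This produces the decomposition $A=\tilde A + H$ with $\norm{\tilde A}_{C^{\alpha_1}(\Omega_{[t]})}\le C\epsilon_0\,\eta^{\alpha_1}(t)$ on the slightly shrunken cylinder, and $H$ a harmonic one-form of the explicit separated form in \eqref{eqn:expandA}. Since $\tilde A$ already decays at order $\alpha_1$, it can be absorbed into $r_A$; the entire task is to kill the $dt$-component terms $a+bt+\sum_i(\tilde a_i e^{\sqrt3 t}+\tilde b_i e^{-\sqrt3(t-\log\lambda)})\omega_i$ and the $\psi_i$-terms $\sum_i(a_i e^{\sqrt3 t}+b_i e^{-\sqrt3(t-\log\lambda)})\psi_i$, all of which decay strictly more slowly than $\eta^{\alpha_1}$ (indeed $\sqrt3=\alpha_3+\text{something}<\alpha_1$, and the $a+bt$ part does not decay at all).

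The key step is to feed the normalization conditions into the expansion. Write $A(\partial_t)=f$ as in the proof of Lemma \ref{lem:improve}, so that $f = f_{\tilde A} + a + bt + \sum_i(\tilde a_i e^{\sqrt3 t}+\tilde b_i e^{-\sqrt3(t-\log\lambda)})\omega_i$ on $\Omega_C=\{\tfrac12\log\lambda\}\times S^3$. Condition \eqref{eqn:gg1}, $\int_{\Omega_C}A(\partial_t)=0$ and $\int_{\Omega_C}A(\partial_t)\cdot\omega_i=0$, together with \eqref{eqn:gg2}, $\int_{\Omega_C}\partial_t(A(\partial_t))\cdot\omega_i=0$, give six scalar equations in the constants $a,b,\tilde a_i,\tilde b_i$; using the orthogonality of $1,\omega_1,\dots,\omega_4$ on $S^3$ (and that $\tilde A$ contributes only an $O(\epsilon_0\lambda^{\alpha_1/2})$-small error at $t=\tfrac12\log\lambda$, which can be iterated away or shown to vanish in the harmonic part), one sees that $\int_{\Omega_C}f=0$ forces $a+b\cdot\tfrac12\log\lambda$ to be essentially zero, while $\int_{\Omega_C}f\cdot\omega_i=0$ and $\int_{\Omega_C}\partial_t f\cdot\omega_i=0$ give a $2\times2$ linear system in $(\tilde a_i e^{\sqrt3\,\frac12\log\lambda},\tilde b_i e^{-\sqrt3\,\frac12\log\lambda})$ whose determinant is a nonzero multiple of $\sqrt3$, hence $\tilde a_i=\tilde b_i=0$. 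For $b$ itself and the $\psi_i$-modes $a_i,b_i$ one still needs $d^*A=0$: integrating the Coulomb relation \eqref{eqn:dSA}, $-\partial_t f + d^*_{S^3}\xi=0$, over $\Omega_t$ shows $\partial_t\int_{\Omega_t}f=0$, so $\int_{\Omega_t}f$ is constant in $t$; combined with $\int_{\Omega_C}f=0$ this kills $b$ (and then $a$). For the $\psi_i$ part one uses that $\psi_i=\iota^*(dx_i)$ are closed and coclosed with $d^*_{S^3}\psi_i=0$, so $d^*A=0$ relates $a_i,b_i$ to $\tilde a_i,\tilde b_i$ (which already vanish), forcing $a_i=b_i=0$ as well; alternatively, the $\psi_i$ modes grow/decay at rate $\sqrt3$ and the same boundary-integral bookkeeping applies. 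What remains in $H$ is exactly $\sum_i(c_{\pm,i}e^{2t}+d_{\pm,i}e^{-2(t-\log\lambda)})\phi_{\pm,i}$, and setting $r_A=\tilde A + (\text{already-absorbed remainder from Lemma \ref{lem:improve}})$ gives \eqref{eqn:improveA} with $\norm{r_A}_{C^{\alpha_1}(\Omega_{[t]})}\le C\eta^{\alpha_1}(t)$ on $[\log\lambda-\log\delta+2,\log\delta-2]$.

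The main obstacle is the careful bookkeeping of how the normalization conditions at the single slice $\Omega_C$ propagate to kill modes whose natural decay is measured from the two ends $\Omega_L,\Omega_R$: one must check that the $2\times 2$ (and scalar) linear systems obtained from \eqref{eqn:gg1}–\eqref{eqn:gg2} are genuinely nondegenerate (the determinant condition uses $\sqrt3\ne 0$, not any largeness of $\lambda$), and that the contributions of $\tilde A$ and of the Lemma \ref{lem:improve} remainder at $t=\tfrac12\log\lambda$ are of size $O(\epsilon_0\lambda^{\alpha_1/2})$, hence truly negligible and not merely small — this is where the weighted norms $\mathcal X_1,\mathcal X_2$ and the exponent $\alpha_1>2$ are used. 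Once the slowly-decaying modes are eliminated, everything else is immediate from Lemma \ref{lem:improve}(1) and elliptic estimates.
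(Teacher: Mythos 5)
Your overall strategy matches the paper's: apply Lemma \ref{lem:improve} and then use the normalization conditions \eqref{eqn:gg1}--\eqref{eqn:gg2} together with $d^*A=0$ to dispose of the slowly-growing modes in the harmonic part $H$. But two specific steps go wrong.

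First, you repeatedly conclude that the coefficients $a,b,\tilde a_i,\tilde b_i,a_i,b_i$ vanish exactly. They do not. The conditions \eqref{eqn:gg1}--\eqref{eqn:gg2} are imposed on $A=\tilde A+H$, not on $H$ alone, and $\tilde A|_{\Omega_C}$ is of size $O(\epsilon_0\lambda^{\alpha_1/2})$ but not zero. What one actually obtains is, e.g., $|\tilde a_i+\tilde b_i|\lambda^{\sqrt3/2}\leq C\lambda^{\alpha_1/2}$ and $|\tilde a_i-\tilde b_i|\lambda^{\sqrt3/2}\leq C\lambda^{\alpha_1/2}$, hence $|\tilde a_i|,|\tilde b_i|\leq C\lambda^{(\alpha_1-\sqrt3)/2}$; since $\alpha_1>\sqrt3$, a short computation shows this forces $|\tilde a_i e^{\sqrt3 t}\omega_i|,|\tilde b_i e^{-\sqrt3(t-\log\lambda)}\omega_i|\leq C\eta^{\alpha_1}(t)$ on the whole cylinder, so these terms are \emph{absorbed into} $r_A$ rather than killed. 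Your parenthetical ``which can be iterated away or shown to vanish in the harmonic part'' flags this tension but neither suggested fix is viable; the correct move is to keep the small nonzero error and verify the resulting weighted bound, as just described.

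Second, and more seriously, your elimination of the $\psi_i$-modes relies on the claim that ``$\psi_i=\iota^*(dx_i)$ are closed and coclosed with $d^*_{S^3}\psi_i=0$.'' This is false. Since $\psi_i=d_{S^3}\omega_i$, one has $d^*_{S^3}\psi_i=3\omega_i\neq 0$, which is exactly the content of \eqref{eqn:collect}. If $\psi_i$ were coclosed, then $d^*A=0$ would impose \emph{no} constraint at all on $a_i,b_i$, and nothing would relate them to the already-controlled $\tilde a_i,\tilde b_i$; nor do \eqref{eqn:gg1}--\eqref{eqn:gg2} reach the $\psi_i$-modes directly, since those conditions only constrain $A(\partial_t)$. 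The paper's argument hinges precisely on the \emph{nonvanishing} of $d^*_{S^3}\psi_i$: applying $d^*$ to the expansion of $A$ converts the $\psi_i$-terms into $3\sum_i(a_ie^{\sqrt3 t}+b_ie^{-\sqrt3(t-\log\lambda)})\omega_i$, which must equal $-d^*r_A=O(\eta^{\alpha_1})$; integrating this identity and its $\partial_t$-derivative over $\Omega_C$ against $\omega_i$ then gives $|a_i|,|b_i|\leq C\lambda^{(\alpha_1-\sqrt3)/2}$, and the same absorption as in the first point finishes the job. As written, your proposal has no working mechanism to control the $\psi_i$-modes.
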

\begin{proof}
	Lemma \ref{lem:improve} gives a form $\tilde{A}$ that satisfies \eqref{eqn:expandA}. Notice that if we set $r_A=\tilde{A}$, then it satisfies \eqref{eqn:another} automatically. The point is that why the first two terms in the right hand side of \eqref{eqn:expandA} satisfies \eqref{eqn:another} and hence can be absorbed into $r_A$.

	Combining \eqref{eqn:expandA} and \eqref{eqn:another}, we derive from \eqref{eqn:gg1} that
	\[
		\abs{\tilde{a}_i+\tilde{b}_i} \lambda^{\sqrt{3}/2} \int_{S^3} \abs{\omega_i}^2 \leq C \eta^{\alpha_1}(\frac{1}{2}\log \lambda)\leq C \lambda^{\alpha_1/2}.
	\]
	Similary, if we use \eqref{eqn:gg2}, we obtain
	\[
		\abs{\tilde{a}_i-\tilde{b}_i} \lambda^{\sqrt{3}/2} \int_{S^3} \abs{\omega_i}^2 \leq C \eta^{\alpha_1}(\frac{1}{2}\log \lambda)\leq C \lambda^{\alpha_1/2}.
	\]
	It follows that
	\[
		\abs{\tilde{a}_i e^{\sqrt{3}t} \omega_i},\abs{\tilde{b}_i e^{-\sqrt{3}(t-\log \lambda)} \omega_i} \leq C \eta^{\alpha_1}(t), \qquad \forall t\in [\log \lambda -\log\delta,\log \delta].
	\]
	Hence, the terms $\tilde{a}_i e^{\sqrt{3}t}\omega_i$ and $\tilde{b}_i e^{-\sqrt{3}(t-\log \lambda)}$ can be absorbed into $r_A$. Since $\int_{\Omega_C} dA^*=0$, we have (see \eqref{eqn:dSA})
	\[
		\abs{b}\leq C \lambda^{\alpha_1/2}.
	\]
	As before, the first equation in \eqref{eqn:gg1} gives
	\[
		\abs{a+ b (\frac{1}{2}\log \lambda)}\leq C \lambda^{\alpha_1/2}.
	\]
From the two inequalities above, we find that
\[
	\abs{a+bt}\leq C \eta^{\alpha_1}(t),\quad \forall t\in [\log\lambda -\log\delta,\log\delta].
\]

In summary, if we adjust $r_A$, we can write 
	\begin{equation*}
		\begin{split}
		A & =  \sum_{i=1}^4 (a_i e^{\sqrt{3}t}+ b_i e^{-\sqrt{3}(t-\log \lambda)})\psi_i \\
		& + \sum_{i=1}^3 (c_{\pm,i}e^{2t}+ d_{\pm,i}e^{-2(t-\log \lambda)})\phi_{\pm,i} +r_A.
		\end{split}
	\end{equation*}

	By applying $d^*$ to the above equation, we obtain
	\begin{equation}
		\label{eqn:dAstar}
		3\sum_{i=1}^4 (a_i e^{\sqrt{3}t}+ b_i e^{-\sqrt{3}(t-\log \lambda)})\omega_i + d^* r_A=0.
	\end{equation}
	Here we have used the following idenities, whose proofs are straight forward computations that are collected in the appendix,
	\begin{equation}
		\label{eqn:collect}
		\begin{split}
		d^*(e^{2t} \phi_{\pm,i})=&  d^*(e^{-2(t-\log \lambda)}\phi_{\pm,i})=0\\
		d^*(e^{\sqrt{3}t} \psi_i) =& 3 e^{\sqrt{3}t} \omega_i \\
		d^*(e^{-\sqrt{3}(t-\log\lambda)} \psi_i) =& 3 e^{-\sqrt{3}(t-\log\lambda)} \omega_i.
		\end{split}
	\end{equation}
We may now argue as before by using \eqref{eqn:dAstar} and
	\[
		\int_{\Omega_C} (d^*A) \cdot \omega_i =\int_{\Omega_C} \partial_t(d^*A) \cdot \omega_i =0.
	\]
	to see that the terms involving $a_i$ and $b_i$ can also be absorbed into $r_A$.
\end{proof}

\subsection{The limit connections}
\label{sub:thelimit}

We now discuss the limit of the blow-up sequence. Recall that $D_k$ is the sequence of Yang-Mills fields discussed in the introduction and Theorem \ref{thm:known}. The cylindrical domain is 
\[
	\Omega_k= [\log \lambda_k-\log\delta,\log\delta]\times S^3.
\]
We also denote the shorter cylinder that appears in Lemma \ref{lem:improve} by $\tilde{\Omega}_k$, i.e.
\[
	\tilde{\Omega}_k= [\log \lambda_k-\log\delta+2,\log\delta-2]\times S^3.
\]
The estimates obtained in Lemma \ref{lem:improve} and Corollary \ref{cor:improveA} imply that
\begin{equation}
	A_k= \sum_{i=1}^3 (c_{\pm,k;i} e^{2t} + d_{\pm,i}e^{-2(t-\log \lambda_k)}) \phi_{\pm,k;i} + r_k
	\label{eqn:Ak}
\end{equation}
where $c_{\pm,k;i}$ and $d_{\pm,k;i}$ are uniformly bounded and that there is a constant independent of $k$ such that
\[
	\norm{r_k}_{C^{\alpha_1}(\Omega_{[t]})}\leq C\eta^{\alpha_1}(t)
\]
for all $t\in [\log \lambda_k-\log\delta +2, \log \delta-2]$.

As $k$ goes to infinity, $\tilde{\Omega}_k$ becomes longer and longer and we can not take a limit of the whole domain. Instead, if we focus on one end (left or right) of $\tilde{\Omega}_k$, we obtain a limit on the half cylinder. More precisely, for any $K$ fixed, passing to the limit $k\to \infty$ and using Lemma \ref{lem:improve} as a priori estimate, we obtain a limit defined on $[\log \delta -K,\log \delta-2]$ 
\begin{equation}
A_\infty=  \sum_{i=1}^3 F_{\pm,\infty;i}e^{2t}\phi_{\pm,i} + r_\infty.
	\label{eqn:limFcylinder}
\end{equation}
By taking diagonal subsequence, the limit is defined on $(-\infty,\log \delta]$ and $r_\infty$ satisfies
\[
	\norm{r_\infty}_{C^{\alpha_1}( \Omega_{[t]})} \leq C\epsilon_0 e^{\alpha_1 t}
\]
for all $t\in (-\infty,\log \delta-2]$.

Similarly, we consider the restriction to $[\log \lambda_k -\log \delta+2, \log \lambda_k-\log \delta +K]$. After a translation by $K$, the connection form $A_k(t-\log \lambda_k)$ converges to 
\begin{equation}
	A_b= \sum_{i=1}^3 F_{\mp,b;i}e^{-2t}\phi_{\pm,i} + r_b \quad \text{on} \quad [-\log\delta+2,\infty )\times S^3,
	\label{eqn:limFbubble}
\end{equation}
where $r_b$ satisfies
\[
	\norm{r_b}_{C^{\alpha_1}(\Omega_{[t]})}\leq C\epsilon_0 e^{-\alpha_1 t}.
\]
\begin{rem}
	We have introduced the notation $F_{\pm,\infty;i}$ and $F_{\mp,b;i}$ instead of $c_{\pm,\infty;i}$ and $d_{\pm,b;i}$. This will be justified in a minute.
\end{rem}

Finally, if we use the normal coordinates $x_1,\dots,x_4$ instead of its cylinder coordiantes and compute the curvature at $x=0$, the equations \eqref{eqn:limFcylinder} yields
\begin{eqnarray*}
	F_\infty(0) &=& \sum_{i=1}^3 2 F_{\pm,\infty;i} \Phi_{\pm,i}.
\end{eqnarray*}
Here we have used 
\[
	\frac{1}{2} d(e^{2t}\phi_{\pm,i}) = \pi^*\Phi_{\pm,i}.
\]
Recall that $\pi(t,\omega)=e^t\omega$ is the coordinate change map from cylinder coordinates to Euclidean coordinates and $\Phi_{\pm,i}$ are standard basis of SD/ASD forms on $\Real^4$(see \eqref{eqn:Phiplus} and \eqref{eqn:Phiminus} for the definition). 

For the bubble curvature, we use an coordinate at the infinity given by
\[
(y_1,y_2,y_3,y_4)= \frac{1}{\abs{x}^2} (x_1,x_2,x_3,x_4).
\]
The advantage of this coordinate system is that its associated cylinder coordinates are easily related to the cylinder coordinates $(t,\omega)$. It suffices to add a minus sign in front of $t$. The disadvantage is that the natural orientation of $(y_1,\dots,y_4)$ is different from that of $(x_1,\dots,x_4)$. We use $\Phi_{\pm;i}(y)$ to denote the two forms $\Phi_{\pm;i}$ in $y$-coordinates. Notice that $\Phi_{+;i}(y)$ are ASD forms due to the problem of orientation mentioned above.

With these in mind, we compute the curvature of the bubble connection at $y=0$ in $y$-coordinates to see
\[
F_b(0)= \sum_{i=1}^3 2 F_{\mp,b;i} \Phi_{\pm,i}(y).
\]

To conclude this section, we notice the key fact that will be used in the proof of Theorem \ref{thm:main} is that
\begin{equation}
	F_{\pm,\infty;i}= \lim_{k\to \infty} c_{\pm,k;i}; \quad F_{\mp,b;i}=\lim_{k\to \infty} d_{\pm,k;i}.
	\label{eqn:keyfact}
\end{equation}

\section{Proof of the main theorem}
\label{sec:proof}

Since we have assumed that the metric $g$ is conformally flat near the only blow-up point $\bx_0$. We may assume it is flat there and we take a coordinate system $x_1,\dots,x_4$ in which the metric $g=\delta_{ij}dx_i\otimes dx_j$. Due to the conformal invariance, we may also regard $D_k$ as a Yang-Mills connection on the cylinder $\Omega_k$ with repsct to the round cylinder metric. Hence if $S$ is the stress-energy tensor (as in \eqref{eqn:stress} w.r.t the round cylinder metric), then
\[
	\mbox{div} S=0.
\]

If $X$ is a vector field on cylinder that generates rotation, or $X=\partial_t$, we have
\[
	\mbox{div} (S\# X)=0.
\]
The diverngence theorem over $(-\infty,\frac{1}{2}\log \lambda]\times S^3$ implies that
\[
	\int_{\Omega_C} S \# X \# \partial_t d\omega=0,
\]
since $\lim_{t\to -\infty }\abs{F}=0$.
More precisely,
\begin{equation}
	\int_{\Omega_C} \langle \iota_X F, \iota_{\partial_t} F \rangle - \frac{1}{4}\abs{F}^2 \langle X, \partial_t \rangle d\omega=0.
	\label{eqn:poho}
\end{equation}

To prove Theorem \ref{thm:main}, we take the gauge in Theorem \ref{thm:neckgauge} and substitute our expansion of $A$ in Corollary \ref{cor:improveA} into \eqref{eqn:poho}. Indeed, dropping the subscript $k$ for simplicity, we may write $A_k$ as 
\begin{equation}
	A= \sum_{i=1}^3 (F^L_{\pm;i} e^{2t} + F^R_{\mp;i} e^{-2(t-\log \lambda)})\phi_{\pm,i} + r_A
	\label{eqn:rewrite}
\end{equation}
where $\abs{r_A}\leq C\eta^{\alpha_1}(t)$.
Here we have used $F^L_{\pm;i}$ for $c_{\pm,i}$ and $F^R_{\mp;i}$ for $d_{\pm;i}$. We will compute \eqref{eqn:poho} for $X= X_{\pm,i}$ and for $X=\partial_t$ separately. The proof will eventually be finished by taking a limit $\lambda\to 0$. It is the leading term, i.e. the term of order $\lambda^2$, that matters. Hence in the following computations, any term that contributes to a term of higher order will be ignored. In this sense, we write $\sim$ instead of $=$.

{\bf Case 1:} $X=X_{\pm;i_0}$. In this case $\langle X, \partial_t \rangle =0$ and it suffices to compute
\[
	\int_{\Omega_C} \langle \iota_X F, \iota_{\partial_t} F \rangle d\omega.
\]
When restricted to $\Omega_C$, $\abs{A}, \abs{dA}$ are bounded by $C\lambda$ and $X$ are bounded. Hence all terms involving $[A\wedge A]$ and $r_A$ do not contribute to the $\lambda^2$ term. Omitting these higher order terms, we write
\begin{eqnarray*}
	\int_{\Omega_C} \langle \iota_X F, \iota_{\partial_t} F \rangle d\omega &\sim& \int_{\Omega_C} \langle \iota_X (dA), \iota_{(\partial_t)}(dA) \rangle d\omega.
\end{eqnarray*}
Taking $d$ of \eqref{eqn:rewrite} and omitting terms smaller than $\lambda$, we obtain
\[
	dA\sim \sum_{i=1}^3 \left( F^L_{\pm;i} d(e^{2t}\phi_{\pm;i}) + F^R_{\mp;i} \lambda^2 d(e^{-2t}\phi_{\pm;i}) \right).
\]
Recalling the definition of $\mathcal P_{\pm;i}$ and $\mathcal Q_{\pm;i}$ (see \eqref{eqn:defpq}) and evaluating at $\Omega_C$ (where $e^{2t}=\lambda$), we have
\begin{equation}
dA|_{\Omega_C}\sim \lambda \sum_{i=1}^3 \left( F^L_{\pm;i} \mathcal P_{\pm;i} + F^R_{\mp;i}  \mathcal Q_{\pm;i}\right).
	\label{eqn:dA}
\end{equation}

We now compute $\iota_X dA|_{\Omega_C}$ and summarize the results in the form of a lemma.
\begin{lem}
	\label{lem:compute}
	\begin{equation}
	 \iota_{X_{+,j}} dA|_{\Omega_C} \stackrel{ {\rm mod} dt }{\sim} \lambda \left( 
	(F^L_{+;i} + F^R_{-;i})+ (F^R_{+;l}+ F^L_{-;l})(T)_{il} 
	\right) \iota_{X_{+;j}} \mathcal P_{+,i}.
	\label{eqn:xjgood}
	\end{equation}
	\begin{equation}
	 \iota_{X_{-,j}} dA|_{\Omega_C} \stackrel{ {\rm mod} dt }{\sim} \lambda \left( 
	(F^L_{-;i} + F^R_{+;i})+ (F^R_{-;l}+ F^L_{+;l})(T)_{il} 
	\right) \iota_{X_{-;j}} \mathcal P_{-,i}.
	\label{eqn:xjminus}
	\end{equation}
	and
	\begin{equation}
	\iota_{\partial_t} (dA)|_{\Omega_C}\sim 2\lambda \sum_{i=1}^3 \left( (F^L_{+;i}-F^R_{-;i}) + (F^R_{+;l}-  F^L_{-;l}) (T)_{il}) \right) \phi_{+;i}.
		\label{eqn:dtgood}
	\end{equation}
\end{lem}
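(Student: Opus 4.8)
The plan is to take the leading-order identity \eqref{eqn:dA}, namely $dA|_{\Omega_C}\sim\lambda\sum_i\bigl(F^L_{+;i}\mathcal P_{+,i}+F^L_{-;i}\mathcal P_{-,i}+F^R_{-;i}\mathcal Q_{+,i}+F^R_{+;i}\mathcal Q_{-,i}\bigr)$, contract it with each of $X_{+,j}$, $X_{-,j}$ and $\partial_t$, and rewrite the three resulting expressions in one common basis. Since (as noted after the table \eqref{eqn:table}) only the $TS^3$-component enters the later computation in \eqref{eqn:poho}, the whole calculation is carried out modulo $dt$. The $[A\wedge A]$ and $r_A$ contributions were already discarded before \eqref{eqn:dA}, so there is nothing further to estimate: the lemma is a purely algebraic identity among the model forms of Section \ref{sub:diff}.

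For \eqref{eqn:xjgood} I would reduce every term of $\iota_{X_{+,j}}\,dA|_{\Omega_C}$ to a multiple of some $\iota_{X_{+,j}}\mathcal P_{+,m}$. The term $\iota_{X_{+,j}}\mathcal P_{+,i}$ is read off directly from \eqref{eqn:table}; the term $\iota_{X_{+,j}}\mathcal Q_{+,i}$ equals $\iota_{X_{+,j}}\mathcal P_{+,i}$ modulo $dt$ by Lemma \ref{lem:PQ}; and for the two opposite-chirality terms I would invert the relations of Lemma \ref{lem:pq}(4) — using $T^{-1}=T^t$ because $T\in SO(3)$ — to write $\mathcal P_{-,i}$ and $\mathcal Q_{-,i}$ as linear combinations of the $\mathcal Q_{+,m}$, respectively the $\mathcal P_{+,m}$, and then again apply Lemma \ref{lem:PQ}. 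Collecting the coefficient of $\iota_{X_{+,j}}\mathcal P_{+,i}$ and relabelling the dummy index yields \eqref{eqn:xjgood}. Identity \eqref{eqn:xjminus} is the same computation with $+$ and $-$ interchanged; one only has to keep in mind that this interchange replaces $T$ in \eqref{eqn:xy} by $T^t$.

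For \eqref{eqn:dtgood} I would first record the two elementary facts $\iota_{\partial_t}\mathcal P_{\pm,i}=2\phi_{\pm,i}$ (already in \eqref{eqn:table}) and $\iota_{\partial_t}\mathcal Q_{\pm,i}=-2\phi_{\pm,i}$; the second follows either from the expansion $\mathcal Q_{\pm,i}=-2\,dt\wedge\phi_{\pm,i}+d\phi_{\pm,i}$ appearing in the proof of Lemma \ref{lem:PQ} (note that $d\phi_{\pm,i}$ carries no $dt$), or from the orientation-reversing isometry $(t,\omega)\mapsto(-t,\omega)$, which sends $\mathcal P_{\pm,i}\mapsto\mathcal Q_{\pm,i}$ and $\partial_t\mapsto-\partial_t$. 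Contracting \eqref{eqn:dA} with $\partial_t$ then gives $2\lambda\sum_i\bigl((F^L_{+;i}-F^R_{-;i})\phi_{+,i}+(F^L_{-;i}-F^R_{+;i})\phi_{-,i}\bigr)$, and substituting $\phi_{-,i}=-(T)_{ji}\phi_{+,j}$ from \eqref{eqn:phipm} and relabelling produces \eqref{eqn:dtgood}. The computation is entirely mechanical; the only part that demands care is the bookkeeping of $T$ against $T^t$ and the repeated index relabellings, so in practice I would lay the four terms of \eqref{eqn:dA} out in a short table before collecting coefficients. I expect no genuine obstacle.
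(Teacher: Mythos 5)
Your proposal is correct and follows essentially the same route as the paper: both start from \eqref{eqn:dA}, both invoke Lemma \ref{lem:pq}(4), Lemma \ref{lem:PQ}, the contraction table \eqref{eqn:table}, and \eqref{eqn:phipm}, and both compute modulo $dt$ before collecting coefficients. The only (harmless) organizational differences are that you reduce each term directly to a single chirality basis $\{\iota_{X_{\pm,j}}\mathcal P_{\pm,m}\}$ rather than passing through both chiralities and then using $TT^t=I$, and that for \eqref{eqn:dtgood} you use the direct formula $\iota_{\partial_t}\mathcal Q_{\pm,i}=-2\phi_{\pm,i}$ (which you justify correctly) instead of first converting all $\mathcal Q$'s to $\mathcal P$'s; both variants are minor simplifications of the paper's bookkeeping, not genuinely different arguments.
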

\begin{rem}
	Notice that $\iota_{\partial_t}(dA)|_{\Omega_C}$ is (up to higher order term) a vector field tangent to $S^3$. Since in \eqref{eqn:poho} we care only about the inner product of $\iota_{X_{\pm;j}} dA$ and $\iota_{\partial_t} dA$, the $dt$ terms are omitted in the right hand side of \eqref{eqn:xjgood} and \eqref{eqn:xjminus}.
\end{rem}
\begin{proof}
	By \eqref{eqn:dA} and Lemma \ref{lem:pq}, we have
\begin{eqnarray*}
	&&\iota_{X_{+;j}} dA|_{\Omega_C}\\
	&\sim& \lambda \sum_{i=1}^3 \left( F^L_{+;i} \iota_{X_{+;j}} \mathcal P_{+;i} +  F^L_{-;i} \iota_{X_{+;j}} \mathcal P_{-;i} +   F^R_{-;i} \iota_{X_{+;j}} \mathcal Q_{+;i}  +  F^R_{+;i} \iota_{X_{+;j}} \mathcal Q_{-;i}\right)\\
	&\sim& \lambda \sum_{i=1}^3\left[ \left( F^L_{-;i}+ F^R_{-;l}(T)_{li} \right) \iota_{X_{+;j}} \mathcal P_{-;i} +  \left( F^L_{+;i} + F^R_{+;l}(T)_{il} \right) \iota_{X_{+;j}} \mathcal P_{+;i}\right].
\end{eqnarray*}
By Lemma \ref{lem:PQ}, 
\begin{equation*}
	\iota_{X_{+;j}} \mathcal P_{-;i} \stackrel{{\rm mod} dt}{=} \iota_{X_{+;j}} \mathcal Q_{-;i} \stackrel{{\rm mod} dt}{=} (T)_{li} \iota_{X_{+;j}} \mathcal P_{+,l}.
\end{equation*}
Hence,
\begin{equation}
	\begin{split}
	& \iota_{X_{+,j}} dA|_{\Omega_C} \\
	\stackrel{ {\rm mod} dt }{\sim} & \lambda \left( 
	F^L_{+;i} + F^R_{+;l}(T)_{il} + F^L_{-;m}(T)_{im} + F^{R}_{-;l}(T)_{lm}(T)_{im}
	\right) \iota_{X_{+;j}} \mathcal P_{+,i}\\
	\stackrel{ {\rm mod} dt }{\sim} & \lambda \left( 
	(F^L_{+;i} + F^R_{-;i})+ (F^R_{+;l}+ F^L_{-;l})(T)_{il} 
	\right) \iota_{X_{+;j}} \mathcal P_{+,i}.
	\end{split}
	\label{eqn:xj}
\end{equation}
This is \eqref{eqn:xjgood} and the proof of \eqref{eqn:xjminus} is the same.
For \eqref{eqn:dtgood}, we compute
\begin{eqnarray*}
	\iota_{\partial_t} dA|_{\Omega_C} &\sim& \lambda \sum_{i=1}^3 \left( F^L_{\pm;i} \iota_{\partial_t} \mathcal P_{\pm;i} + F^R_{\mp;i} \iota_{\partial_t} Q_{\pm;i} \right)\\ 
					  &\sim & \lambda \sum_{i=1}^3 \left[ \left( F^L_{+;i} + F^R_{+;l}(T)_{il} \right) \iota_{\partial_t} \mathcal P_{+;i} +  \left( F^L_{-;i} + F^R_{-;l}(T)_{li} \right)\iota_{\partial_t} \mathcal P_{-;i} \right]\\
					  &=& 2\lambda \sum_{i=1}^3 \left[ \left( F^L_{+;i} + F^R_{+;l}(T)_{il} \right) \phi_{+;i} + \left( F^L_{-;i} + F^R_{-;l}(T)_{li} \right) \phi_{-;i} \right].
\end{eqnarray*}
Here in the last line above, we have used the first row of \eqref{eqn:table}.

Since $\phi_{-;i}=-(T)_{ji}\phi_{+;j}$ (see \eqref{eqn:phipm}), we simplify the above
\begin{equation}
	\begin{split}
	\iota_{\partial_t} (dA)|_{\Omega_C} &\sim 2\lambda \sum_{i=1}^3 \left( F^L_{+;i} + F^R_{+;l}(T)_{il}+  (F^L_{-;j} + F^R_{-;l}(T)_{lj})(-(T)_{ij}) \right) \phi_{+;i} \\
	&\sim 2\lambda \sum_{i=1}^3 \left( (F^L_{+;i}-F^R_{-;i}) + (F^R_{+;l}-  F^L_{-;l}) (T)_{il}) \right) \phi_{+;i} 
	\end{split}
	\label{eqn:dt}
\end{equation}
\end{proof}

With Lemma \ref{lem:compute}, we continue to compute the left hand side of \eqref{eqn:poho} for the case $X=X_{+,1}$. The other cases are proved in the same way.
Taking $j=1$ in \eqref{eqn:xjgood} and using \eqref{eqn:table}, we get 
\begin{eqnarray*}
	\iota_{X_{+;1}} dA|_{\Omega_C} &\stackrel{ {\rm mod}dt}{\sim}& 2\lambda\left( 
	(F^L_{+;2} + F^R_{-;2})+ (F^R_{+;l}+ F^L_{-;l})(T)_{2l} 
	\right) (-\phi_{+;3}) \\
	&&+ 2\lambda \left( 
	(F^L_{+;3} + F^R_{-;3})+ (F^R_{+;l}+ F^L_{-;l})(T)_{3l}
	\right) \phi_{+;2}.
\end{eqnarray*}
Since $\set{\phi_{+,i}}_{i=1,2,3}$ is an orthnormal basis, we have
\begin{eqnarray*}
	&& \langle \iota_{\partial_t}dA ,\iota_{X_{+;1}} dA \rangle \\
	&\sim& 4\lambda^2 \langle (F^L_{+;2}-F^R_{-;2}) + (F^R_{+;l}-  F^L_{-;l}) (T)_{2l}, (F^L_{+;3} + F^R_{-;3})+ (F^R_{+;l}+ F^L_{-;l})(T)_{3l} \rangle \\
	&-& 4\lambda^2 \langle (F^L_{+;3}-F^R_{-;3}) + (F^R_{+;l}-  F^L_{-;l}) (T)_{3l}, (F^L_{+;2} + F^R_{-;2})+ (F^R_{+;l}+ F^L_{-;l})(T)_{2l} \rangle.
\end{eqnarray*}
Using \eqref{eqn:xy}, we can check the following fact
\begin{equation}
	\label{eqn:Tfact}
	\int_{S^3} T_{ij} d\omega=0; \quad \int_{S^3} T_{ij} T_{i'j'}d\omega=0
\end{equation}
as long as $(i,j)\ne (i',j')$. 
Therefore,
\begin{eqnarray*}
	&& \int_{\Omega_C} \langle \iota_{X_{+;1}} dA, \iota_{\partial_t} dA \rangle \\
	&\sim& 4\lambda^2\int_{\Omega_C} \langle (F^L_{+;2}-F^R_{-;2}) , (F^L_{+;3} + F^R_{-;3}) \rangle - \langle (F^L_{+;3}-F^R_{-;3})  , (F^L_{+;2} + F^R_{-;2}) \rangle \\
	&\sim& 8\lambda^2 \abs{S^3} \left(\langle F^L_{+;2}, F^R_{-;3} \rangle - \langle F^L_{+;3}, F^R_{-;2} \rangle \right).
\end{eqnarray*}
Now, we reinstall the subscript $k$ and take the limit of $k\to \infty $ in the equation \eqref{eqn:poho}(divided by $\lambda^2$), to see
\[
\lim_{k\to \infty} \left(\langle F^L_{+,k;2}, F^R_{-,k;3} \rangle - \langle F^L_{+,k;3}, F^R_{-,k;2} \rangle \right)=0.
\]
Similar computation for $j=2$ and $j=3$ gives
\[
\lim_{k\to \infty} \left(\langle F^L_{+,k;3}, F^R_{-,k;1} \rangle - \langle F^L_{+,k;1}, F^R_{-,k;3} \rangle \right)=0
\]
and
\[
\lim_{k\to \infty} \left(\langle F^L_{+,k;1}, F^R_{-,k;2} \rangle - \langle F^L_{+,k;2}, F^R_{-,k;1} \rangle \right)=0.
\]
Together with \eqref{eqn:keyfact}, we finish the proof of \eqref{eqn:main16} in Theorem \ref{thm:main}.

{\bf Case 2.} $X=\partial_t$. 

In this case, \eqref{eqn:poho} reads
\begin{equation}
	\label{eqn:poho2}
\int_{\Omega_C} \abs{\iota_{\partial_t} F_k}^2 -\frac{1}{4} \abs{F_k}^2 d\omega =0.
\end{equation}
As before, we concentrate on terms of scale $\lambda^2$. 

Recall that $\set{\phi_{+;i}}_{i=1}^3$ is an orthonormal basis. Therefore (dropping the subscript $k$ again)
\begin{equation}
	\label{eqn:case21}
	\begin{split}
	\int_{\Omega_C} \abs{\iota_{\partial_t} F}^2 \sim& \int_{\Omega_C} \abs{\iota_{\partial_t} dA}^2 \\
	\sim& 4\lambda^2 \sum_{i=1}^3\int_{\Omega_C} \abs{(F^L_{+;i}-F^R_{-;i}) + (F^R_{+;l}-  F^L_{-;l}) (T)_{il}}^2\\
	=& 4\lambda^2 \sum_{i=1}^3\int_{\Omega_C} \abs{F^L_{+;i}-F^R_{-;i}}^2 + \abs{F^R_{+;i}-  F^L_{-;i}}^2 \\
	=& 4\lambda^2 \int_{\Omega_C} \sum_{i=1}^3 \abs{F^L_{+;i}}^2+\abs{F^L_{-;i}}^2+\abs{F^R_{+;i}}^2+\abs{F^R_{-;i}}^2 \\
	&- 8\lambda^2 \sum_{i=1}^3 \int_{\Omega_C} \langle F^L_{+;i}, F^R_{-;i} \rangle + \langle F^L_{-;i}, F^{R}_{+;i} \rangle.
	\end{split}
\end{equation}
Here we have used \eqref{eqn:Tfact} again and the fact that $T$ is an orthogonal matrix.
On the other hand,
\begin{eqnarray*}
	\int_{\Omega_C} \abs{F}^2 &\sim& \int_{\Omega_C} \abs{dA}^2.
\end{eqnarray*}
By Lemma \ref{lem:pq} and \eqref{eqn:dA},
\begin{equation*}
	dA|_{\Omega_c} \sim \lambda \sum_{i=1}^3 \left[ (F^L_{+;i}+ F^R_{+;j}(T)_{ij}) \mathcal P_{+;i} +  (F^L_{-;i}+ F^R_{-;j}(T)_{ji}) \mathcal P_{-;i} \right].
\end{equation*}
By the part (3) of Lemma \ref{lem:pq}, 
\begin{equation}
	\label{eqn:case22}
	\begin{split}
	\int_{\Omega_C} \abs{dA}^2 \sim& (4\lambda)^2 \int_{\Omega_C}\sum_{i=1}^3 \left( \abs{ F^L_{+;i}+ F^R_{+;j}(T)_{ij}}^2  + \abs{F^L_{-;i}+ F^R_{-;j}(T)_{ji}}^2\right) \\
	=& 16\lambda^2 \int_{\Omega_C} \sum_{i=1}^3 \abs{F^L_{+;i}}^2 +\abs{F^L_{-;i}}^2+\abs{F^R_{+;i}}^2 +\abs{F^R_{-;i}}^2.
	\end{split}
\end{equation}
Plugging \eqref{eqn:case21} and \eqref{eqn:case22} into \eqref{eqn:poho2} and comparing the coefficients of $\lambda^2$, we obtain
\[
\lim_{k\to \infty} \sum_{i=1}^3 \langle F^L_{+,k;i}, F^R_{-,k;i} \rangle + \langle F^L_{-,k;i}, F^{R}_{+,k;i} \rangle =0.
\]
This completes the proof of \eqref{eqn:main7}.

\appendix
\section{A linear equation on cylinder}
Let $E$ be some vector bundle over $S^3$ equipped with an inner product.
Let $L$ be an self-adjont, elliptic, nonnegative operator defined on sections of $E$. Let $u(t)$ and $f(t)$ be two families of sections of $E$ satisfying
\begin{equation}
	(\partial_t^2- L) u(t)=f(t)
	\label{eqn:Lequ}
\end{equation}
Assume the eigenvalues of $L$ are $\lambda_1^2,\dots,\lambda_n^2,\dots$ in increasing order and the corresponding eigenfunctions are $\varphi_i$, normalized with respect to $L^2$ norm. Here $\lambda_i$'s are counted with multiplicity and are assumed to be nonnegative.

The main result of this section is
\begin{thm}\label{thm:linear}
	Suppose $\alpha$ is a positive number different from any $\lambda_i$. If $f$ is some smooth function satisfying $\abs{f}\leq e^{-\alpha M} e^{\alpha \abs{t}}$ for $t\in [-M,M]$, there is a smooth solution $u(t)$ to \eqref{eqn:Lequ} such that
	\[
		\abs{u}\leq C e^{-\alpha M} e^{\alpha \abs{t}},\quad \text{for} \quad t\in [-M,M].
	\]
	Here $C$ depends on $\alpha$, but not on $M$.
\end{thm}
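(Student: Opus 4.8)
The plan is to solve the equation mode by mode in the spectral decomposition of $L$, choosing the correct particular solution for each eigenvalue, and then to upgrade the resulting $L^2(S^3)$-in-space bound to a pointwise bound by interior elliptic regularity on the cylinder.

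First I would fix $\beta := \tfrac12\inf_i |\lambda_i-\alpha|$, which is \emph{positive} precisely because $\alpha$ avoids the discrete spectrum and $\lambda_i\to\infty$; in particular there is no eigenvalue in $(\alpha,\alpha+2\beta)$. Writing $f(t)=\sum_i f_i(t)\varphi_i$ with $\sum_i f_i(t)^2=\norm{f(t)}^2_{L^2(S^3)}\le |S^3|\,e^{-2\alpha M}e^{2\alpha|t|}$, each coefficient must solve the scalar ODE $u_i''-\lambda_i^2 u_i=f_i$ on $[-M,M]$. For the (infinitely many) modes with $\lambda_i\ge\alpha$ — hence $\lambda_i\ge\alpha+2\beta$ — I would take the two-sided bounded Green's function $u_i(t)=-\tfrac1{2\lambda_i}\int_{-M}^M e^{-\lambda_i|t-s|}f_i(s)\,ds$; using $\tfrac1{2\lambda_i}e^{-\lambda_i r}\le \tfrac1{2(\alpha+2\beta)}e^{-(\alpha+2\beta)r}$ together with $e^{\alpha|s|}\le e^{\alpha|t|}e^{\alpha|s-t|}$, an elementary convolution estimate bounds the whole ``fast'' part $u^{\ge}(t):=\sum_{\lambda_i\ge\alpha}u_i(t)\varphi_i$ by $\norm{u^{\ge}(t)}_{L^2(S^3)}\le C(\alpha,\beta)\,e^{-\alpha M}e^{\alpha|t|}$ with \emph{no loss in $M$} — this is where the gap $\beta$ is essential, since at $\lambda_i=\alpha$ the identical computation would cost a factor $M$. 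For the finitely many modes with $\lambda_i<\alpha$ the two-sided Green's function is the wrong choice (it is too large near $t=0$); instead I would use the Duhamel solution based at the origin, $u_i(t)=\int_0^t \tfrac{\sinh(\lambda_i(t-s))}{\lambda_i}f_i(s)\,ds$ (interpreted as $\int_0^t(t-s)f_i(s)\,ds$ when $\lambda_i=0$), for which a direct estimate gives $|u_i(t)|\le C_i\,e^{-\alpha M}e^{\alpha|t|}$ with $C_i$ depending only on $\lambda_i$ and on $\alpha-\lambda_i\ge2\beta$; summing this finite collection preserves the bound. Altogether $u:=u^{\ge}+\sum_{\lambda_i<\alpha}u_i\varphi_i$ solves $(\partial_t^2-L)u=f$ and satisfies $\norm{u(t)}_{L^2(S^3)}\le C\,e^{-\alpha M}e^{\alpha|t|}$ on $[-M,M]$.

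It remains to pass from this $L^2$-in-space bound to the pointwise bound. Since $L$ is a second-order elliptic operator on $S^3$, $\partial_t^2-L$ is elliptic on the four-dimensional cylinder, and $f$ is smooth; bootstrapping interior elliptic estimates on slightly shrinking sub-cylinders around $[t-1,t+1]\times S^3$ (first $L^2\to W^{2,2}$, then Sobolev embedding into $L^q$ for large $q$, then $L^q\to W^{2,q}\hookrightarrow C^0$) yields
\[
\norm{u}_{C^0([t-\frac12,t+\frac12]\times S^3)}\le C\big(\norm{u}_{L^2([t-1,t+1]\times S^3)}+\norm{f}_{C^0([t-1,t+1]\times S^3)}\big),
\]
which needs only a $C^0$ bound on $f$. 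On $[t-1,t+1]$ the weight $e^{-\alpha M}e^{\alpha|s|}$ is comparable to its value at $s=t$, so combining with the $L^2$ bound just proved gives $|u(t,\cdot)|\le C\,e^{-\alpha M}e^{\alpha|t|}$. Near the ends $t=\pm M$ one either states the estimate on $[-M+1,M-1]$ (all the application needs) or first extends $f$ slightly past $\pm M$ and solves on a longer cylinder; the same bootstrapping shows $u$ is smooth.

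The main obstacle I anticipate is getting the $M$-uniformity right: the construction is forced to use the two-sided kernel for modes above $\alpha$ and the origin-based Duhamel kernel for the finitely many modes below $\alpha$, and the ``fast'' estimate must exploit the spectral gap $\beta$ to avoid the resonant factor of $M$ that a naive bound produces. The remaining ingredients — the scalar ODE estimates, the $L^2(S^3)$-convergence of the series, and the elliptic upgrade to $C^0$ — are routine.
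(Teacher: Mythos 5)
Your proposal is correct and follows essentially the same route as the paper's appendix proof: expand in $L$-eigenmodes, solve the scalar ODE $u_i''-\lambda_i^2u_i=f_i$ mode by mode using the two-sided Green's kernel for $\lambda_i>\alpha$ (the paper extends $a_i$ by zero and writes the two half-line integrals, which is the same kernel as your $-\tfrac1{2\lambda_i}\int_{-M}^M e^{-\lambda_i|t-s|}f_i\,ds$) and the origin-based Duhamel solution for the finitely many $\lambda_i<\alpha$ (the paper's iterated-integral formula \eqref{eqn:ai1} is precisely your $\sinh$-kernel after differentiating twice), with the spectral gap doing the work of keeping the constant $M$-independent, and finally an elliptic/regularity step to pass from the $\ell^2$-in-modes bound to a smooth classical solution. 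Your write-up is in fact a bit more explicit than the paper's on the final $L^2(S^3)\to C^0$ upgrade and on what happens at the ends $t=\pm M$, but the underlying argument is the same.
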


\begin{rem}
	For our purposes, we are interested in two simple cases. Either $u(t)$ is a function on $S^3$, or it is a one form on $S^3$. The operator $L$ is Laplace and Hodge Laplace respectively.
\end{rem}

We expand $f$ as a series using eigenfunctions(or forms)
\[
	f(t)= \sum_{i=1}^\infty a_i(t) \varphi_i.
\]
It follows from Parsaval equality that
\begin{equation}
	\sum_{i=1}^\infty a_i^2(t)\leq C e^{-2\alpha M} e^{2\alpha \abs{t}}.
	\label{eqn:parsaval}
\end{equation}

Formally, if the solution $u$ has an expansion
\[
	u(t)= \sum_{i=1}^\infty A_i(t) \varphi_i,
\]
then $A_i$ is a solution to the ODE
\begin{equation}
	\partial_t^2 A_i- \lambda_i^2 A_i= a_i.
	\label{eqn:ode}
\end{equation}

The proof of the theorem consists of two parts. First, find $A_i$ for each $i$ and then show that $\sum_i A_i$ converges and yields the solution we want.

{\bf Step 1.} Find solution $A_i$ for each $\lambda_i$. The solutions to \eqref{eqn:ode} is not unique. It turns out that we need to choose a particular $A_i$. We make different choices for different $\lambda_i$.

{\bf Case 1.} $\lambda_1=0$. (If $\lambda_1>0$, then skip this part.)
Set
\[
	A_1(t)= \int_0^t \int_0^s a_1(x)dxds.
\]
By \eqref{eqn:parsaval},
\begin{eqnarray*}
	A_1^2(t)&\leq& C \left( \int_{0}^{\abs{t}} \int_0^{\abs{s}} e^{-\alpha M} e^{ \alpha \abs{x}} dx ds \right)^2\\
	&\leq& C e^{-2\alpha M} e^{2\alpha \abs{t}}.
\end{eqnarray*}

{\bf Case 2.} $\lambda_i< \alpha$. There are only finitely many $\lambda_i$'s that are smaller than $\alpha$.
For $t\in [-M,M]$, set
\begin{equation}
	A_i(t)= e^{\lambda_i t} \int_0^t e^{-2\lambda_i s} \left( \int_0^s a_i(x)e^{\lambda_i x} dx \right) ds
	\label{eqn:ai1}
\end{equation}
If $t>0$, we estimate
\begin{eqnarray*}
	\abs{A_i(t)}&\leq& e^{\lambda_i t}\int_0^t e^{-2\lambda_i s} \left( \int_0^s C e^{-\alpha M} e^{\alpha x}e^{\lambda_i x} dx \right) ds \\
	&\leq& C e^{-\alpha M} e^{\alpha t}.
\end{eqnarray*}
If $t<0$, we write $\tilde{t}=-t$ and estiamte as before (using $\alpha>\lambda_i$)
\begin{eqnarray*}
	\abs{A_i(t)}&=&\abs{ e^{- \lambda_i \tilde{t}}\int_0^{\tilde{t}} e^{2\lambda_i s} \left( \int_0^s a_i(-x) e^{-\lambda_i x} dx \right) ds} \\
	&\leq& C e^{-\alpha M} e^{\alpha \abs{t}}.
\end{eqnarray*}

{\bf Case 3.} $\lambda_i>\alpha$.

Notice that the following is also a solution of \eqref{eqn:ode}
\begin{equation}
	A_i(t)=	-\frac{1}{2\lambda_i} e^{\lambda_i t} \int_t^{\infty} e^{-\lambda_i x} a_i (x) dx - \frac{1}{2\lambda_i} e^{-\lambda_i t} \int_{-\infty}^t e^{\lambda_i x} a_i(x) dx.
	\label{eqn:ai2}
\end{equation}
Here we have extended the domain of $a_i$ by setting $a_i(x)=0$ for $\abs{x}>M$.
Hence,
\begin{eqnarray*}
	\abs{A_i}^2(t) &\leq & \frac{1}{2\alpha^2} e^{2\lambda_i t} \left( \int_t^\infty e^{-\lambda_i x} a_i(x)dx \right)^2 + \frac{1}{2\alpha^2} e^{-2\lambda_i t} \left( \int_{-\infty}^t e^{\lambda_i x} a_i(x) \right)^2 \\
&:=& I+ II.	
\end{eqnarray*}
We assume $t>0$. (The case $t<0$ is estimated similarly.) Choose $\lambda'$ and $\delta$ satisfying $\alpha+\delta<\lambda'<\lambda_i$ for all $\lambda_i>\alpha$ so that
\begin{eqnarray*}
	I &\leq& C(\alpha) e^{2\lambda_i t} \left( \int_t^\infty e^{-2\delta x} dx \right) \left( \int_t^\infty e^{2\delta x} e^{-2\lambda_i x} a_i^2(x) dx \right) \\
	&\leq& C(\alpha) e^{-2\delta t} \left( \int_t^\infty e^{2\delta x} e^{2\lambda_i (t-x)} a_i^2(x)dx \right) \\
	&\leq&  C(\alpha) e^{-2\delta t} \int_t^\infty e^{2\delta x} e^{2 \lambda'(t-x)} a_i^2(x) dx.
\end{eqnarray*}
Here in the last line above, we used the fact that $t-x<0$.
For $II$, 
\begin{eqnarray*}
	II &\leq& C(\alpha) e^{-2\lambda_i t} \left( \int_{-\infty}^0 e^{\lambda_i x} a_i(x) dx \right)^2 + C(\alpha) e^{-2\lambda_i t} \left( \int_0^t e^{\lambda_i x} a_i(x) dx\right)^2 \\
	   &\leq& C(\alpha,\delta) e^{-2\lambda_i t} \int_{-\infty}^0 e^{\red{-}2\delta x}e^{2\lambda_i x} a_i^2(x) dx + C(\alpha,\delta) e^{-2\lambda_i t} e^{2\delta t} \int_0^t e^{-2\delta x} e^{2\lambda_i x} a_i^2(t) dx \\
	&\leq& C \int_{-\infty}^0 e^{-2\delta x}e^{-2\lambda' (t-x)} a_i^2(x) dx + C  e^{2\delta t} \int_0^t e^{-2\delta x} e^{-2\lambda' (t-x)} a_i^2(t) dx.
\end{eqnarray*}
Here in the last line above, we used the fact that $t-x>0$. 

{\bf Step 2.}
We combine all estimates obtained above to bound $\sum_i A_i^2(t)$ and keep in mind that there are finitely many (depending on $\alpha$) $\lambda_i$ that is smaller than $\alpha$. Hence
\begin{eqnarray*}
	\sum_i A_i^2(t) &=&  \sum_{\lambda_i<\alpha} A_i^2(t) + \sum_{\lambda_i>\alpha} A_i^2(t) \\
	&\leq& C e^{-2\alpha M} e^{2\alpha \abs{t}} + \sum_i (I + II).
\end{eqnarray*}
Recall that $\lambda'>\alpha+\delta$, which implies that 
\begin{eqnarray*}
	\sum_i I &\leq& C e^{-2\lambda M} e^{-2\delta t} \int_t^\infty e^{2\delta x} e^{2\lambda'(t-x)} e^{2\alpha x} dx \\
	&\leq& C e^{-2\alpha M} e^{2\alpha t}.
\end{eqnarray*}
Similarly,
\begin{eqnarray*}
	\sum_i II &\leq& C e^{-2\alpha M} \int_{-\infty}^0 e^{-2\delta x} e^{-2\lambda'(t-x)} e^{2\alpha \abs{x}} dx  \\
	&& + C e^{-2\lambda M} e^{2\delta t} \int_0^t e^{-2\delta x} e^{-2\lambda'(t-x)} e^{2\alpha x} dx \\
	&\leq& Ce^{-2\lambda M} \left( e^{-2\lambda' t} + e^{2\alpha t} \right)\\
	&\leq& C e^{-2\alpha M} e^{2\alpha t}.
\end{eqnarray*}
Finally, the above estimate implies that 
\[
	u_N:=\sum_{i=1}^N A_i(t) \varphi_i
\]
converges to $u=\sum_{i=1}^\infty A_i(t)\varphi_i$ locally in $L^2$. Moreover, $u_N$ is a classical solution to the equation
\[
	(\partial_t^2-L)u_N=f_N
\]
where
\[
	f_N:=\sum_{i=1}^N a_i(t)\varphi_i.
\]
Since $f_N$ converges to $f$ smoothly as $N\to \infty$, the elliptic estimates imply that $u_N$ converges to $u$ smoothly. Hence $u$ is a classical solution to \eqref{eqn:Lequ}.

\section{Elliptic estiamte on cylinder}
In this section, we prove two elliptic estimates on long cylinder $\Omega$ corresponding to Neumann and Dirichlet boundary conditions respectively.

\begin{lem}\label{lem:neumann}
	Let $u$ be a $C^{\alpha_1}$ function on $\Omega$ satisfying
	\[
		\triangle u =v; \quad \Psi(\partial_t u)|_{\Omega_L}=v_L; \quad \Psi(\partial_t u)|_{\Omega_R}=v_R
	\]
	and the normalization condition $\int_{\Omega_C} u =0$. Set 
	\begin{equation}
		\label{eqn:bab}
		\int_{\Omega_c} \partial_t u = b_0; \quad \int_{\Omega_C} \partial_t u \cdot \omega_i = \bar{a}_i; \quad \int_{\Omega_C} \partial^2_t u \cdot \omega_i = \bar{b}_i.
	\end{equation}
	Then
	\begin{equation}
		\label{eqn:neumann}
		\norm{u}_{\mathcal X_1} \leq C \left( \norm{v}_{\mathcal X_3} + \norm{v_L}_{C^{\alpha_2}(\Omega_L)}+ \norm{v_R}_{C^{\alpha_2}(\Omega_R)}+ \frac{\abs{b_0} + \sum_{i=1}^4 (\abs{\bar{a}_i}+\abs{\bar{b}_i})}{\lambda^{\alpha_2/2}}\right).
	\end{equation}
\end{lem}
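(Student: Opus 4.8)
I would prove Lemma \ref{lem:neumann} by separation of variables on the $S^3$ factor, reducing to a family of scalar second order ODEs in $t$ of exactly the type treated in Theorem \ref{thm:linear}, and then reassembling the pointwise (in $t$, $L^2$ in $S^3$) bounds into a weighted Hölder bound by interior elliptic estimates. Concretely, expand $u(t,\cdot)=\sum_k A_k(t)\varphi_k$ and $v=\sum_k a_k(t)\varphi_k$ with $\varphi_k$ the $L^2$-normalized eigenfunctions of $-\triangle_{S^3}$ and eigenvalues $\mu_k$; the equation $\triangle u=v$ becomes $A_k''-\mu_k A_k=a_k$. The spectrum of $-\triangle_{S^3}$ on functions is $\{0\}\cup\{3\}\cup[8,\infty)$, the eigenvalue $3$ being realized by $\omega_1,\dots,\omega_4$, so I take $\varphi_1,\dots,\varphi_4$ proportional to the $\omega_i$. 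Since $\sqrt3<\alpha_2<2<\sqrt8$, the characteristic exponents $\pm\sqrt{\mu_k}$ are either strictly below $\alpha_2$ (the five ``low'' modes $\mu_k\in\{0,3\}$) or strictly above $\alpha_2$ (the ``high'' modes $\mu_k\ge 8$). It is convenient to pass to the centered variable $\tau=t-\frac12\log\lambda$, so that $t$ ranges over $\tau\in[-M,M]$ with $M=\frac12\log(1/\lambda)+\log\delta\to\infty$; then $\eta(t)^{\alpha_2}=(2\sqrt\lambda\cosh\tau)^{\alpha_2}$ is comparable, with $\lambda$-independent constants, to $e^{-\alpha_2 M}e^{\alpha_2|\tau|}$, and $\lambda^{-\alpha_2/2}$ is comparable to $e^{\alpha_2 M}$, the size of the weight at the ends of the cylinder. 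This is precisely the setting of Theorem \ref{thm:linear}.

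\textbf{The three groups of modes.} For the high modes $\mu_k\ge8$, note that $\Psi$ is exactly the orthogonal projection onto $\bigoplus_{\mu_k\ge8}$, so these are the modes constrained by $v_L,v_R$. I would split $A_k=A_k^{p}+A_k^{h}$, where $A_k^{p}$ is the particular solution given by the convolution formula \eqref{eqn:ai2} (this is Case 3 in the proof of Theorem \ref{thm:linear}, with $\sqrt8$ in the role of the spectral gap and $\alpha=\alpha_2$), and $A_k^{h}=p_k e^{\sqrt{\mu_k}(t-\log\delta)}+q_k e^{-\sqrt{\mu_k}(t-\log\lambda+\log\delta)}$, with $p_k,q_k$ chosen so that $\partial_t A_k(\log\delta)=(v_L)_k$ and $\partial_t A_k(\log\lambda-\log\delta)=(v_R)_k$. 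Because the cylinder is long and $\sqrt{\mu_k}\ge\sqrt8$, the $2\times2$ linear system for $(p_k,q_k)$ is diagonally dominant (the off-diagonal entries are exponentially small in $M\sqrt{\mu_k}$) with a $\lambda$-uniformly bounded inverse, giving $|p_k|+|q_k|\lesssim\mu_k^{-1/2}\big(|(v_L)_k|+|(v_R)_k|+\|\text{forcing contribution}\|\big)$; since $\sqrt{\mu_k}>\alpha_2$, each exponential in $A_k^{h}$ is dominated by $\eta(t)^{\alpha_2}$. Summing over $k$ by Parseval on $S^3$ (using $C^{\alpha_2}(\Omega_L),C^{\alpha_2}(\Omega_R)\hookrightarrow L^2(S^3)$ and the convergence of the forcing sum exactly as in Step 2 of the proof of Theorem \ref{thm:linear}, with an intermediate exponent $\alpha_2<\lambda'<\sqrt8$) yields the $\|v\|_{\mathcal X_3}+\|v_L\|_{C^{\alpha_2}(\Omega_L)}+\|v_R\|_{C^{\alpha_2}(\Omega_R)}$ part of the estimate. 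For the five low modes the homogeneous solutions ($1,t$ for $\mu_0=0$; $e^{\pm\sqrt3\,t}$ for $\mu_k=3$) do not decay towards the center, so I use the interior data at $\Omega_C$ instead: for $k=0$ the normalization $\int_{\Omega_C}u=0$ fixes $A_0(\frac12\log\lambda)$ and $b_0$ fixes $A_0'(\frac12\log\lambda)$, whence $|A_0(t)|\lesssim |b_0|\,|t-\frac12\log\lambda|+(\text{forcing})$, and since $|\tau|e^{-\alpha_2|\tau|}$ is bounded and $|b_0|\,|\tau|\lesssim|b_0|M\lesssim|b_0|\lambda^{-\alpha_2/2}$ this is $\lesssim(|b_0|\lambda^{-\alpha_2/2}+\|v\|_{\mathcal X_3})\eta(t)^{\alpha_2}$; for $k=1,\dots,4$, $\bar a_i$ fixes $A_k'(\frac12\log\lambda)$ and $\bar b_i$, together with $A_k''=3A_k+a_k$, fixes $A_k(\frac12\log\lambda)$, so $A_k=c_+e^{\sqrt3(t-\frac12\log\lambda)}+c_-e^{-\sqrt3(t-\frac12\log\lambda)}+A_k^{p}$ with $|c_\pm|\lesssim|\bar a_i|+|\bar b_i|+|a_k(\tfrac12\log\lambda)|$ and $|a_k(\tfrac12\log\lambda)|\lesssim\|v\|_{\mathcal X_3}\lambda^{\alpha_2/2}$; since $\sqrt3<\alpha_2$ the homogeneous terms are $\lesssim\lambda^{-\alpha_2/2}(|\bar a_i|+|\bar b_i|)\,\eta(t)^{\alpha_2}$, and since $\alpha_2\neq\sqrt3$ variation of parameters gives $|A_k^{p}|\lesssim(\alpha_2^2-3)^{-1}\|v\|_{\mathcal X_3}\eta(t)^{\alpha_2}$.

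\textbf{Reassembly.} Combining the two groups gives, for each $t\in I$, a bound on $\|u(t,\cdot)\|_{L^2(S^3)}$ (and similarly on $\|\partial_t u(t,\cdot)\|_{L^2}$) by the right-hand side of \eqref{eqn:neumann} times $\eta(t)^{\alpha_2}$. Applying the interior Schauder estimate for $\triangle$ on each unit sub-cylinder, with right-hand side $v$ controlled in $C^{\alpha_3}$ weighted by $\eta^{\alpha_2}$ (and, for the sub-cylinders meeting the boundary, the genuine boundary conditions together with $v_L,v_R\in C^{\alpha_2}$), upgrades this to the $C^{\alpha_1}(\Omega_{[t]})$ bound weighted by $\eta^{-\alpha_2}$, i.e. to $\|u\|_{\mathcal X_1}$.

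\textbf{Main obstacle.} The delicate point is the high-mode estimate: one must show that the mode-by-mode bounds sum with a constant \emph{independent of $\lambda$} even though the cylinder length $M\to\infty$. This is where both the exponential smallness of the $p_k$--$q_k$ coupling (in $M\sqrt{\mu_k}$, using $\mu_k\ge8$) and the summation device from Step 2 of the proof of Theorem \ref{thm:linear} (Parseval plus the intermediate exponent $\alpha_2<\lambda'<\sqrt8$) are essential; and one must carry the forcing through the inhomogeneous Neumann conditions without losing a factor of $M$. The low modes, by contrast, reduce to a finite, essentially explicit ODE computation whose only subtlety is the non-resonance $\alpha_2\notin\{0,\sqrt3\}$ and the matching of the $\lambda^{-\alpha_2/2}$ weight on the $\Omega_C$-data in the $\mathcal Y$-norm with the value of $\eta^{\alpha_2}$ at the ends of the cylinder.
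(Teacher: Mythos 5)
Your proposal is correct, and it takes a genuinely different route from the paper's. The paper proceeds by a chain of \emph{reductions}: it first uses Theorem~\ref{thm:linear} as a black box to kill $v$ in the interior (Step~1), then subtracts an explicit harmonic function to reduce to $b_0=\bar a_i=\bar b_i=0$ (Step~2), then tracks the five low modes by ODE from $\Omega_C$ outward to control $\partial_t u$ on $\partial\Omega$ (Step~3), then subtracts boundary-layer corrections to reduce to \emph{homogeneous} Neumann data (Step~4); the key length-uniform estimate is then a global $L^2$ bound obtained by the \emph{energy method} --- multiply by $u$, integrate by parts, use the Poincar\'e inequality on $S^3$ --- (Step~5), and finally the $\mathcal X_1$ bound follows because $u$ is harmonic in the middle, so its eigenfunction expansion there has no low-frequency part and therefore decays at rate $\ge\sqrt8>\alpha_2$ (Step~6). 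You instead work \emph{mode-by-mode} throughout, matching the high-mode homogeneous solutions directly to the inhomogeneous Neumann data via a diagonally dominant $2\times2$ system and summing by Parseval, and handling the low modes by explicit ODE computation anchored at $\Omega_C$. The two arguments agree in spirit on the low modes (your low-mode analysis is essentially the paper's Steps~2 and 3 merged), but diverge on the global/high-frequency part: the paper avoids any mode-by-mode summation in the high-frequency range by replacing it with the integration-by-parts $L^2$ estimate, which is arguably more robust (no explicit solution formulas needed, no need to check summability of mode-dependent constants), whereas your approach is more explicit and self-contained (it subsumes the role of Theorem~\ref{thm:linear} rather than invoking it as a black box, and it makes the source of the $\lambda$-uniformity --- the spectral gap $\sqrt8>\alpha_2$ and the exponentially small coupling in the boundary-matching system --- completely transparent). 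Both approaches pay for $\lambda$-independence in the same place, just with different currency; and both finish with interior/boundary Schauder estimates to pass from $L^2$-in-$S^3$ to the weighted $C^{\alpha_1}$ norm.
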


\begin{proof}
	{\bf Step 1.} We may assume that $v$ is identically zero on $[\log\lambda -\log \delta +5, \log \delta -5]$. In fact, by Theorem \ref{thm:linear}, we have $\tilde{u}$ defined on $\Omega$ satisfying
	\[
		\triangle \tilde{u}=v
	\]
	and
	\[
		\abs{\tilde{u}} \leq C \norm{v}_{\mathcal X_3} \eta^{\alpha_2}(t), \quad \forall t\in [\log \lambda -\log \delta +1, \log \delta -1].
	\]
	By the interior estimate of the Poisson equation, we have
	\begin{equation}
		\label{eqn:intu}
		\norm{\tilde{u}}_{C^{\alpha_1}(\Omega_{[t]})}\leq C \norm{v}_{\mathcal X_3} \eta^{\alpha_2}(t), \quad \forall t\in [\log \lambda -\log \delta +3, \log \delta -3].
	\end{equation}
	Let $\phi$ be the smooth cut-off function satisfying
	\[
		\phi(t)= \left\{
			\begin{array}[]{ll}
				1 & t\in [\log \lambda - \log\delta +4, \log \delta -4] \\
				0 & t\in [\log \lambda -\log \delta, \log \lambda -\log \delta + 7/2]\cup [\log\delta-7/2,\log \delta].
			\end{array}
			\right.
	\]
	It follows from \eqref{eqn:intu} that
	\begin{equation}
		\norm{\phi \tilde{u}}_{\mathcal X_1}\leq C \norm{v}_{\mathcal X_3}. 
		\label{eqn:cutmiddle}
	\end{equation}

	If we set
	\[
		\underline{u}:=u-\phi \tilde{u}-a
	\]
	where $a$ is some constant that makes $\int_{\Omega_C} \underline{u}=0$,
	then $\triangle \underline{u}$ is identically zero for $t\in [\log \lambda-\log\delta+4,\log\delta-4]\times S^1$. It suffices to prove the lemma for $ \underline{u}$, because
	\begin{itemize}
		\item The size of $a$ is estimated by
	\[
		\abs{a}\leq C\int_{\Omega_C} \abs{\tilde{u}} \leq C \norm{v}_{\mathcal X_3} \lambda^{\alpha_2/2},
	\]
	which implies that $\norm{a}_{\mathcal X_1}\leq C \norm{v}_{\mathcal X_3}$ and hence $\norm{u-\underline{u}}_{\mathcal X_1}\leq C\norm{v}_{\mathcal X_3}$.
\item  
	\[
		\Psi(\partial_t \underline{u})= \Psi(\partial_t u)\quad  \text{on} \quad  \Omega_L\cup \Omega_R.
	\]
\item 
\[
	\norm{\triangle \underline{u}}_{\mathcal X_3} \leq C \norm{v}_{\mathcal X_3}.
\]
\item By \eqref{eqn:cutmiddle}, we have
	\[
		\abs{\int_{\Omega_C}\partial_t (u- \underline{u})}, \abs{\int_{\Omega_C}\partial_t (u- \underline{u})\cdot \omega_i}, \abs{\int_{\Omega_C}\partial^2_t (u- \underline{u})\cdot \omega_i} \leq C \norm{v}_{\mathcal X_3} \lambda^{\alpha_2/2}.
	\]
	\end{itemize}

	{\bf Step 2.} For the proof of this lemma, we may assume that $b_0= \bar{a}_i= \bar{b}_i=0$. Otherwise, we may find $a_i,b_i$ and subtract from $u$ a harmonic function $h$ given by
	\[
		h=b_0(t-\frac{1}{2}\log \lambda) + \sum_{i=1}^4 (a_i e^{\sqrt{3}t} + b_i e^{-\sqrt{3}(t-\log \lambda)})\omega_i.
	\]
	For an estimate of $a_i$ and $b_i$, we derive from
	\[
		\int_{\Omega_C} \partial_t h \cdot \omega_i =\bar{a}_i; \quad \int_{\Omega_C}  \partial^2_t h \cdot \omega_i=\bar{b}_i
	\]
	that
	\[
		\sqrt{3} (a_i-b_i)\lambda^{\sqrt{3}/2} \int_{S^3} \omega_i^2 = \bar{a}_i; \quad 3(a_i+b_i) \lambda^{\sqrt{3}/2} \abs{S^3} \omega_i^2 = \bar{b}_i.
	\]
	Hence,
	\[
		\sum_{i=1}^4\left( \abs{a_i}+\abs{b_i} \right)\leq C \lambda^{-\sqrt{3}/2} \sum_{i=1}^4 \left( \abs{\bar{a}_i} + \abs{\bar{b}_i} \right).
	\]
With this, we can verify by definition that 
\[
	\norm{h}_{\mathcal X_1}\leq C \frac{\abs{b_0} + \sum_{i=1}^4 (\abs{\bar{a}_i}+\abs{\bar{b}_i})}{\lambda^{\alpha_2/2}}.
\]
Notice that subtracting $h$ does not affect the equation and the boundary condition at $\Omega_L\cup \Omega_R$.

{\bf Step 3.} 
Following the previous step, we assume that $b_0=\bar{a}_i=\bar{b}_i=0$. The aim of this step is prove
\begin{equation}
	\norm{\partial_t u}_{C^{\alpha_2}(\Omega_L\cup \Omega_R)}\leq C \left( \norm{v_L}_{C^{\alpha_2}(\Omega_L)} + \norm{v_R}_{C^{\alpha_2}(\Omega_R)} + \norm{v}_{\mathcal X_3} \right).
	\label{eqn:boundaryut}
\end{equation}
There is a universal constant $C$ such that for any $C^{\alpha_2}$ function $f$ on $S^3$, we have
\begin{equation}
	\label{eqn:unif}
	\norm{f}_{C^{\alpha_2}(S^3)}\leq C\left(\norm{\Psi(f)}_{S^3}+ \abs{\int_{S^3} f} + \sum_{i=1}^4 \abs{\int_{S^3}f\cdot \omega_i}\right).
\end{equation}
Hence, it suffices to prove
\begin{equation}
	\label{eqn:lowfre}
\abs{\int_{\set{t}\times S^3} \partial_t u} + \sum_{i=1}^4 \abs{\int_{\set{t}\times S^3}\partial_t u\cdot \omega_i}\leq C \norm{v}_{\mathcal X_3}
\end{equation}
for $\set{t}\times S^3=\Omega_L,\Omega_R$.
Since we have assumed that the left hand side of \eqref{eqn:lowfre} is zero when $t=\frac{1}{2}\log \lambda$, it suffices to study the change of it as a function of $t$.

To estimate $\int_{\set{t}\times S^3} \partial_t u$, we use the divergence theorm to see that
\begin{equation}
	\abs{\int_{\Omega_L} \partial_t u} =\abs{\int_{\Omega_L} \partial_t u - \int_{\Omega_C}\partial_t u} \leq \int_{[\log \delta, \frac{1}{2}\log \lambda]\times S^3} \abs{v} \leq C \norm{v}_{\mathcal X_3}.	
	\label{eqn:prooflow}
\end{equation}
The same argument works for $\Omega_R$ as well.

To estimate the change of $\int_{\set{t}\times S^3} \partial_t u\cdot \omega_i$, we set
\[
	g_i(t):=\int_{ \set{t}\times S^3} \partial_t u \cdot \omega_i;\quad h_i(t):=\int_{ \set{t}\times S^3} u\cdot \omega_i.
\]
By the fact that $\triangle_{S^3}\omega_i = -3 \omega_i$, we integrate by parts to see
\begin{equation}
	\label{eqn:ghp}
	h_i''(t)=g_i'(t)= \int_{ \set{t}\times S^3} (v-\triangle_{S^3} u) \cdot \omega_i = \int_{ \set{t}\times S^3} v\cdot \omega_i +3 h_i(t).
\end{equation}
If we set
\[
	v_i(t)= \int_{ \set{t}\times S^3} v\cdot \omega_i,
\]
the equation \eqref{eqn:ghp} becomes an ODE of $h_i$
\[
	h_i''(t)-3h_i(t)=v_i(t).
\]
For $t=\frac{1}{2}\log \lambda$, we know
\begin{eqnarray*}
	h_i'(\frac{1}{2}\log \lambda)&=&0\\
	\abs{h_i(\frac{1}{2}\log \lambda)}&\leq& C\lambda^{\alpha_2/2},
\end{eqnarray*}
where we have used \eqref{eqn:ghp} and the assumption that $g_i'(\frac{1}{2}\log \lambda)=\bar{b}_i=0$.
The above initial value problem of ODE has an explicit formula for its solution, which we combine with the decay assumption of $v_i$ to conclude
\[
	\abs{h_i(t)}\leq C \norm{v}_{\mathcal X_3} \eta(t)^{\alpha_2}.
\]
\begin{rem}
	Here it is important to have $\alpha_2>\sqrt{3}$.
\end{rem}
By \eqref{eqn:ghp}, we obtain the same upper bound for $g'_i(t)$. Since $g(\frac{1}{2}\log \lambda)=0$, we obtain by integration
\[
\abs{g_i}\leq C\norm{v}_{\mathcal X_3} \eta(t)^{\alpha_2}.
\]
The proof of \eqref{eqn:lowfre} is done by taking $t=\log \delta$ and $t=\log \lambda-\log \delta$.

{\bf Step 4.} Given the inequality \eqref{eqn:boundaryut}, we may assume that $\partial_t u|_{\Omega_R\cup \Omega_L}$ is zero. To see this, let $u_L$ be the harmonic function on 
	\[
		[\log \delta - 3, \log \delta] \times S^3
	\]
	satisfying the boundary condition
	\[
		\partial_t u_L|_{\Omega_L}=\partial_t u|_{\Omega_L}; \quad u_L|_{ \set{\log \delta -3}\times S^3}=0.
	\]
	Let $\varphi_L(t)$ be the cut-off function that is identically $1$ on $[\log \delta -2, \log\delta]$ and vanishes near $t=\log \delta-3$. By the elliptic estimate, there is universal constant $C$ such that
	\[
		\norm{\varphi_L u_L}_{C^{\alpha_1}([\log \delta -3,\log\delta]\times S^3)} \leq C \norm{\partial_t u}_{C^{\alpha_2}(\Omega_L)},
	\]
	which implies that
\[
	\norm{\triangle(\varphi_L u_L)}_{\mathcal X_3} \leq C \norm{\partial_t u}_{C^{\alpha_2}(\Omega_L)}.
\]
The same argument gives $\varphi_R$ and $u_R$ such that
\[
 \norm{\varphi_R u_R}_{C^{\alpha_1}([\log\lambda-\log\delta,\log\lambda-\log\delta+3]\times S^3)}\leq C \norm{\partial_t u}_{C^{\alpha_2}(\Omega_R)}
\]
and
\[
	\norm{\triangle(\varphi_R u_R)}_{\mathcal X_3} \leq C \norm{\partial_t u}_{C^{\alpha_2}(\Omega_R)}.
\]
Setting $\underline{u}=u-\varphi_L u_L - \varphi_R u_R$, due to \eqref{eqn:boundaryut}, the proof of the lemma is then reduced to the claim that
\begin{equation}
	\norm{\underline{u}}_{\mathcal X_1} \leq C \norm{v-\triangle(\varphi_L u_L + \varphi_R u_R)}_{\mathcal X_3}.
\end{equation}
Notice that by the construction of $\underline{u}$, for the proof of this lemma, we may assume (in addition to Step 1 and Step 2) that $\partial_t u$ vanishes on $\partial \Omega$.

{\bf Step 5.}  Let $u$ be as given in the lemma with the additional assumptions 
\begin{itemize}
	\item $b_0=\bar{a}_i=\bar{b}_i=0$ in \eqref{eqn:bab} (by Step 2);
	\item $\partial_t u|_{\Omega_L\cup \Omega_R}=0$ (by Step 4);
	\item $\triangle u\equiv 0$ on $[\log \lambda -\log \delta+5, \log \delta-5]\times S^3$ (by Step 1).
\end{itemize}
In this step, we establish a global estimate
\[
	\int_{\Omega} u^2 \leq C \norm{v}_{\mathcal X_3}^2.
\]
Multiplying both sides of the equation by $u$ and integrating by parts, we obtain (by the assumption that $\partial_t u|_{\partial\Omega}=0$)
\begin{equation}
	\label{eqn:uvu}
	\int_{\Omega} \abs{\nabla u}^2 = - \int_{\Omega} v\cdot u.
\end{equation}
Setting $g(t)=\int_{\set{t}\times S^3} u$, we apply the Poincar\'e inequality to see
\begin{eqnarray*}
	\int_{\Omega} \abs{u-g(t)}^2 &=& \int_{[\log\lambda-\log\delta,\log\delta]} \int_{S^3} \abs{u-g(t)}^2 \\
	&\leq& \int_{[\log\lambda-\log\delta,\log\delta]} \int_{S^3} \abs{\nabla_{S^3} u}^2 \\
	&\leq& \int_{\Omega} \abs{\nabla u}^2 \\
	&\stackrel{\text{\ref{eqn:uvu}}}{\leq}& \norm{v}_{L^2(\Omega)} \norm{u}_{L^2(\Omega)} \\
	&\leq& \norm{v}_{L^2(\Omega)} \norm{u-g(t)}_{L^2(\Omega)} + \norm{v}_{L^2(\Omega)} \norm{g(t)}_{L^2(\Omega)}.
\end{eqnarray*}
By Young's inequality, we have 
\[
\norm{u-g(t)}_{L^2}\leq C\left( \norm{v}_{L^2(\Omega)} + \norm{g}_{L^2(\Omega)} \right).
\]
For an estimate of $g(t)$, we notice that $g(\frac{1}{2}\log \lambda)=0$ and that we have proved in \eqref{eqn:prooflow}
\[
	\abs{g'(t)}\leq C \norm{v}_{\mathcal X_3} \eta(t)^{\alpha_2}.
\]
Hence,
\[
	\abs{g(t)}\leq C \norm{v}_{\mathcal X_3} \eta(t)^{\alpha_2}.
\]
This implies that $\norm{g}_{L^2(\Omega)}\leq C\norm{v}_{\mathcal X_3}$ (with a constant independent of the length of the cylinder). Therefore, we obtain
\begin{equation}
	\int_{\Omega} u^2  \leq C\norm{v}_{\mathcal X_3}^2.
	\label{eqn:u2}
\end{equation}

{\bf Step 6.}
By \eqref{eqn:u2}, the elliptic estimate near the boundary implies that
\begin{equation}
	\norm{u}_{C^{\alpha_1}([\log \lambda -\log\delta, \log\lambda-\log \delta+6])}, \norm{u}_{C^{\alpha_1}([\log \delta -6, \log \delta])}\leq C\norm{v}_{\mathcal X_3}.
	\label{eqn:goodboundary}
\end{equation}
Over the cylinder $[\log \lambda -\log \delta +5, \log\delta -5]\times S^3$, $u$ is a harmonic function with bounded $L^2$ norm. Hence it has an expansion
\begin{equation}
	u=  a_0+b_0 t + \sum_{i=1}^4 (a_i e^{\sqrt{3}t} +b_i e^{-\sqrt{3}(t-\log\lambda)})\omega_i + (\text{higher order terms.})
	\label{eqn:gooda}
\end{equation}
However, the assumption made in Step 2 together with $\int_{\Omega_C}u=0$ imply that for $i=0,1,\cdots,4$, $a_i=b_i=0.$
Since $\alpha_2>\sqrt{3}$ and the next term in the expansion is of order $\sqrt{8}$, we have (for $t\in [\log \lambda -\log \delta+6,\log \delta -7]$)
\begin{equation}
	\norm{u}_{C^{\alpha_1}(\Omega_{[t]})}\leq C \norm{u}_{L^2(\Omega)} \eta(t)^{\alpha_2}\leq C \norm{v}_{\mathcal X_3} \eta(t)^{\alpha_2}.
	\label{eqn:goodcenter}
\end{equation}
Combining \eqref{eqn:goodcenter} and \eqref{eqn:goodboundary}, we get the estimate
\[
	\norm{u}_{\mathcal X_1}\leq C\norm{v}_{\mathcal X_3}
\]
which concludes the proof of Lemma \ref{lem:neumann}.
\end{proof}

\begin{lem}
	\label{lem:dirichlet}
	Suppose that $u\in C^{\alpha_1}(\Omega)$ satisfying $\triangle u=v$ and the boundary conditions
	\begin{equation}
		\label{eqn:assumedb}
	\Psi(u|_{\Omega_R})= \Psi(u|_{\Omega_L})=0.
	\end{equation}
	If 
	\begin{equation}
		\label{eqn:centeru}
	\int_{\Omega_C} u = \int_{\Omega_C} \partial_t u= \int_{\Omega_C} u\cdot \omega_i = \int_{\Omega_C} \partial_t u\cdot \omega_i=0,
	\end{equation}
then
\begin{equation}
	\label{eqn:dirichlet}
	\norm{u}_{\mathcal X_1}\leq C \norm{v}_{\mathcal X_3}.
\end{equation}
\end{lem}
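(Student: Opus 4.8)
The plan is to mirror the six-step proof of Lemma \ref{lem:neumann}, with the homogeneous condition $\Psi(u|_{\Omega_L})=\Psi(u|_{\Omega_R})=0$ replacing the Neumann data and the four equalities in \eqref{eqn:centeru} replacing \eqref{eqn:bab}. First I would reduce to the case where $v$ vanishes on a slightly shorter central subcylinder: Theorem \ref{thm:linear} applied with $L=-\triangle_{S^3}$ (whose relevant exponents $0,\sqrt3,\sqrt8,\dots$ are all different from $\alpha_2$) produces $\tilde u$ with $\triangle\tilde u=v$ and $\abs{\tilde u}\le C\norm{v}_{\mathcal X_3}\eta^{\alpha_2}$, and after multiplying by an interior cut-off $\phi$ and subtracting a constant $a$ (chosen so the $\Omega_C$-average vanishes) one obtains a function with $\triangle$ supported near $\partial\Omega$. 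Since $\phi$ vanishes near $\partial\Omega$ and $\Psi$ annihilates constants, the boundary condition is preserved; the four center quantities in \eqref{eqn:centeru} are changed by at most $C\norm{v}_{\mathcal X_3}\lambda^{\alpha_2/2}$. These perturbations are then removed by subtracting the harmonic function
\[
	h=b_0\!\left(t-\tfrac12\log\lambda\right)+\sum_{i=1}^4\left(a_ie^{\sqrt3 t}+b_ie^{-\sqrt3(t-\log\lambda)}\right)\omega_i,
\]
whose ten coefficients solve the linear system coming from $\int_{\Omega_C}h$, $\int_{\Omega_C}\partial_t h$, $\int_{\Omega_C}h\cdot\omega_i$, $\int_{\Omega_C}\partial_t h\cdot\omega_i$; since $\Psi$ kills $1$ and the $\omega_i$, subtracting $h$ does not disturb the boundary condition, and the bound on the data together with $\alpha_2>\sqrt3$ gives $\norm{h}_{\mathcal X_1}\le C\norm{v}_{\mathcal X_3}$. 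From now on all four center quantities vanish.

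Next comes the boundary estimate $\norm{u}_{C^{\alpha_2}(\Omega_L\cup\Omega_R)}\le C\norm{v}_{\mathcal X_3}$. Because $\Psi(u|_{\Omega_L})=0$, the trace $u|_{\Omega_L}$ lies in the finite-dimensional space $\mathrm{span}\{1,\omega_1,\dots,\omega_4\}$, so it suffices (cf. \eqref{eqn:unif}) to bound $g(t):=\int_{\{t\}\times S^3}u$ and $g_i(t):=\int_{\{t\}\times S^3}u\cdot\omega_i$ at $t=\log\delta$ and $t=\log\lambda-\log\delta$. Using $\int_{S^3}\triangle_{S^3}u=0$ and $\triangle_{S^3}\omega_i=-3\omega_i$ one gets $g''=v_0$ and $g_i''-3g_i=v_i$ with $\abs{v_0},\abs{v_i}\le C\norm{v}_{\mathcal X_3}\eta^{\alpha_2}$; the center conditions give vanishing Cauchy data at $t=\tfrac12\log\lambda$, and integrating these ODEs (here $\alpha_2>\sqrt3$ is essential for the $g_i$ equation) yields $\abs{g(t)},\abs{g_i(t)}\le C\norm{v}_{\mathcal X_3}\eta^{\alpha_2}(t)$, hence the claim at the two ends. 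As in Step 4 of the proof of Lemma \ref{lem:neumann}, one then subtracts a cut-off harmonic extension near each end to reduce to $u|_{\partial\Omega}=0$ exactly, at the cost of changing $v$ only by a term with $\mathcal X_3$-norm $\le C\norm{v}_{\mathcal X_3}$ supported near the ends, which affects neither the center conditions nor the vanishing of $v$ in the middle.

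With $u|_{\partial\Omega}=0$ the boundary term in $\int_\Omega\abs{\nabla u}^2=-\int_\Omega v\,u$ disappears; separating off the $S^3$-average $g(t)$, the Poincar\'e inequality on $S^3$ controls $u-g(t)/\abs{S^3}$ by $\nabla u$, while $g$ is controlled pointwise by $C\norm{v}_{\mathcal X_3}\eta^{\alpha_2}(t)$ as in \eqref{eqn:prooflow}, and $\norm{\eta^{\alpha_2}}_{L^2(\Omega)}\le C$ uniformly in $\lambda$ (near each end $\eta\sim e^{\pm t}$, so the integral is finite and $\lambda$-independent). This gives $\norm{u}_{L^2(\Omega)}\le C\norm{v}_{\mathcal X_3}$. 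Dirichlet elliptic estimates near each end then upgrade this to $C^{\alpha_1}$ control there; in the interior $u$ is harmonic with bounded $L^2$, and the four center conditions force the $1$ and $\omega_i$ modes of its cylindrical expansion to vanish, so the surviving modes grow or decay at order $\ge\sqrt8>\alpha_2$ and $\norm{u}_{C^{\alpha_1}(\Omega_{[t]})}\le C\norm{v}_{\mathcal X_3}\eta^{\alpha_2}(t)$ in the middle. Combining the two regions gives $\norm{u}_{\mathcal X_1}\le C\norm{v}_{\mathcal X_3}$.

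The main obstacle is, exactly as in Lemma \ref{lem:neumann}, the global $L^2$ bound on a cylinder whose length tends to infinity, where the one-dimensional Poincar\'e inequality degenerates; the fix is to treat the $S^3$-average $g(t)$ separately and control it through the center normalization rather than the boundary condition. The rest is routine but requires careful bookkeeping of the powers of $\lambda$ in the two reduction steps so that every constant stays independent of $\lambda$, and makes repeated use of the inequalities $\sqrt3<\alpha_2<\alpha_1<\sqrt8$.
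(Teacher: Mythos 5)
Your proposal is correct and follows essentially the same route as the paper, which itself presents Lemma~\ref{lem:dirichlet} as a streamlined adaptation of the six-step proof of Lemma~\ref{lem:neumann}: reduce to $v$ supported near $\partial\Omega$ via Theorem~\ref{thm:linear} and subtract a low-mode harmonic correction to restore \eqref{eqn:centeru}; bound the boundary trace through \eqref{eqn:unif} using the ODEs $g''=v_0$ and $g_i''-3g_i=v_i$ with vanishing Cauchy data at $\Omega_C$; reduce to exact Dirichlet zero data, integrate by parts with the $S^3$-Poincar\'e inequality on $u$ minus its slice average, and finish with boundary elliptic estimates plus the interior harmonic expansion whose $1$ and $\omega_i$ modes are killed by \eqref{eqn:centeru}. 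The only slip is cosmetic: your $h$ as written has nine coefficients, with the tenth condition absorbed by the earlier constant $a$.
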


The version that we state and prove there involves only homogeneous boundary conditions and its proof is easier than that of Lemma \ref{lem:neumann}. We give a brief outline.
\begin{proof}
	{\bf Step 1.} We claim that it suffices to prove the lemma for $v$ that is identically $0$ for $t\in [\log \lambda-\log\delta +5, \log\delta-5]$. In fact, by Theorem \ref{thm:linear} and the elliptic estimates, we find $\tilde{u}$ defined on $\Omega$ satisfying
	\begin{equation*}
		\triangle\tilde{u}=v
	\end{equation*}
and		
	\begin{equation}
	\label{eqn:smallcenteru}
		\norm{ \tilde{u}}_{C^{\alpha_1}(\Omega_{[t]})}\leq \norm{v}_{\mathcal X_3}\eta^{\alpha_2}(t); \quad \forall t\in [\log \lambda -\log\delta +3, \log \delta-3].
	\end{equation}
	Notice that $\tilde{u}$ does not satisfy \eqref{eqn:centeru} in general. However, we may find
	\[
		h= a_0+ b_0(t-\frac{1}{2}\log \lambda) + \sum_{i=1}^4 (a_i e^{\sqrt{3}t} + b_i e^{-\sqrt{3}(t-\log \lambda)} )\omega_i
	\]
	such that $\tilde{u}-h$ satisfies \eqref{eqn:centeru}. Moreover, by \eqref{eqn:centeru} and \eqref{eqn:smallcenteru},
	\[
		\abs{a_0}+\abs{b_0}\leq C\norm{v}_{\mathcal X_3}\lambda^{\alpha_2/2},\quad \abs{a_i}+\abs{b_i}\leq C\norm{v}_{\mathcal X_3}\lambda^{(\alpha_2-\sqrt{3})/2}.
	\]
	Therefore $\norm{h}_{\mathcal X_1}\leq C \norm{v}_{\mathcal X_3}$. 

	Let $\varphi$ be the cutoff function used in Step 1 of the proof of Lemma \ref{lem:neumann}. We verify directly that $u-\varphi(\tilde{u}-h)$ (as $u$) satisfies the extra assumption in the above claim in addition to all assumptions in the lemma. Moreover,
	\begin{eqnarray*}
		\norm{\triangle (u-\varphi(\tilde{u}-h))}_{\mathcal X_3} &\leq& \norm{v}_{\mathcal X_3} + \norm{\varphi(\tilde{u}-h)}_{\mathcal X_1}\\
									 &\leq& C \norm{v}_{\mathcal X_3}.
	\end{eqnarray*}
	If the lemma holds with the extra assumption, i.e. 
	\[
		\norm{u-\varphi(\tilde{u}-h)}_{\mathcal X_1}\leq C \norm{\triangle(u-\varphi(\tilde{u}-h))}_{\mathcal X_3},
	\]
	then we have
	\[
		\norm{u}_{\mathcal X_1}\leq C \norm{v}_{\mathcal X_3}.
	\]

	{\bf Step 2.} Set
	\[
		g(t)=\int_{\set{t}\times S^3} u; \quad g_i(t)= \int_{\set{t}\times S^3}  u\cdot \omega_i.
	\]
	As in the Step 3 in Lemma \ref{lem:neumann}, we get
	\[
		\abs{g(t)}+ \abs{g_i(t)}\leq C \norm{v}_{\mathcal X_3} \eta^{\alpha_2}(t).
	\]
	By \eqref{eqn:unif} and \eqref{eqn:assumedb}, we obtain
	\begin{equation*}
	\norm{u|_{\Omega_L}}_{C^{\alpha_2}(S^3)}, \norm{u|_{\Omega_R}}_{C^{\alpha_2}(S^3)}\leq C \norm{v}_{\mathcal X_3}.
	\end{equation*}

	{\bf Step 3.} The same argument in the Step 4 of Lemma \ref{lem:neumann} allows us to reduce the proof of the lemma to the case
	\[
		u|_{\Omega_R}=u|_{\Omega_L}=0.
	\]
	Using this zero Dirichlet boundary value, we do integration by parts to get (as in Step 5 there)
	\[
		\int_{\Omega} u^2 \leq C \norm{v}_{\mathcal X_3}^2.
	\]

	{\bf Step 3.} The final step is the same as the Step 6 in Lemma \ref{lem:neumann}.
\end{proof}

\section{Computation on Cylinder}
In this section, we collect some elementary computations.

\subsection{Proof of \eqref{eqn:directcom}}
Let $e_1,e_2,e_3$ be an orthonomal basis of one forms on $S^3$. Assume that the positive orientation of the cylinder $\Real\times S^3$ is given by the frame $(dt, e_1,e_2,e_3)$. Hence, we may compute
\begin{eqnarray*}
	\triangle_h (f dt) &=& - (*d*d + d*d*)(fdt)\\
	&=& - * d * (d_{S^3}f \wedge dt) - d*d (f e_1\wedge e_2\wedge e_3) \\
	&=&* d * (dt \wedge d_{S^3} f) - d* (\partial_t f dt\wedge e_1\wedge e_2\wedge e_3)\\
	&=&* d (*_{S^3} d_{S^3} f) - d(\partial_t f) \\
	&=&* dt\wedge *_{S^3} d_{S^3}\partial_t f + * d_{S^3} *_{S^3} d_{S^3} f - \partial_t^2 f dt - d_{S^3} \partial_t f \\
	&=& *_{S^3}^2 d_{S^3} \partial_t f - (*_{S^3} d_{S^3} *_{S^3} d_{S^3}f) dt - \partial_t^2 f dt - d_{S^3}\partial_t f \\
	&=& (-\triangle_{S^3} f -\partial_t f) dt.
\end{eqnarray*}
Notice that here in the last line $\triangle_{S^3}$ is the Laplacian for analysist, while the Hodge Laplacian for function $f$ on $S^3$ is given by $-*_{S^3} d_{S^3} *_{S^3}d_{S^3}$.

For the other half, we compute
\begin{eqnarray*}
	\triangle_h \xi(x,t) &=& - * d * d \xi - d * d* \xi \\
	&=& -*d* dt\wedge(\partial_t \xi) - * d* d_{S^3}\xi + d* d (*_{S^3}\xi \wedge dt)\\
	&=&-*d*_{S^3}(\partial_t \xi) - * d \left( dt\wedge *_{S^3}d_{S^3}\xi \right) + d*\left( d_{S^3}*_{S^3}\xi \wedge dt \right) \\
	&=&-* dt\wedge(*_{S^3} \partial_t^2 \xi) - *d_{S^3}*_{S^3}\partial_t \xi + * dt\wedge d_{S^3} *_{S^3} d_{S^3}\xi - d(*_{S^3} d_{S^3} *_{S^3} \xi)\\
	&=& - *_{S^3}^2 \partial_t^2 \xi + dt\wedge (*_{S^3}d_{S^3}*_{S^3} \partial_t \xi) + *_{S_3}d_{S^3}*_{S^3}d_{S^3}\xi - d_{S^3}*_{S^3}d_{S^3}*_{S_3} \xi - dt\wedge \partial_t(*_{S^3} d_{S^3} *_{S^3}\xi)\\
	&=& -\partial_t^2 \xi + \triangle_{h,S^3}\xi.
\end{eqnarray*}

\subsection{Proof of \eqref{eqn:collect}}
If we write $\phi$ for one of $\phi_{\pm,i}$, we have
\begin{eqnarray*}
	d^*(e^{2t}\phi)&=&-*d*(e^{2t}\phi)\\
	&=&-*d(e^{2t}dt\wedge *_{S^3}\phi)\\
	&=& *\left( e^{2t}dt\wedge d_{S^3}*_{S^3}\phi \right)\\
	&=& e^{2t} *_{S^3}d_{S^3}*_{S^3} \phi\\
	&=&e^{2t} d_{S^3}^* \phi=0,
\end{eqnarray*}
since $\phi_{\pm,i}$ is coclosed.

Recall that $\psi_i$ is defined to be $d_{S^3}\omega_i$ if $\omega_i$ is regarded as a function on cylinder independent of $t$.
Hence, we compute
\begin{eqnarray*}
	d^*(e^{\sqrt{3}t} \psi_i) &=& - * d * (e^{\sqrt{3}t} d_{S^3}\omega_i)\\
				  &=& * d \left( e^{\sqrt{3}t} dt \wedge  *_{S^3}d_{S^3} \omega_i \right) \\
				  &=& - * e^{\sqrt{3}t} dt\wedge d_{S^3}*_{S^3}d_{S^3}\omega_i \\
				  &=& - e^{\sqrt{3}t} *_{S^3}d_{S^3}*_{S^3}d_{S^3}\omega_i \\
				  &=& -e^{\sqrt{3}t} \triangle_{S^3} \omega_i \\
				  &=& 3 e^{\sqrt{3}t}\omega_i.
\end{eqnarray*}
Notice that for a function on $S^3$, the Hodge Laplace $\triangle_{h;S^3}= \delta d = -*_{S^3}d_{S^3}*_{S^3}$ and our convention is that $\triangle_{S^3}$ is minus the Hodge Laplace.

The proof for the last line in \eqref{eqn:collect} is the same.

\bibliographystyle{alpha}
\bibliography{foo}

\begin{thebibliography}{Uhl82b}

\bibitem[DK90]{donaldson1990}
S.~K. Donaldson and P.~B. Kronheimer.
\newblock {\em The geometry of four-manifolds}.
\newblock Oxford Mathematical Monographs. The Clarendon Press, Oxford
  University Press, New York, 1990.
\newblock Oxford Science Publications.

\bibitem[Fol89]{folland1989}
G.~B. Folland.
\newblock Harmonic analysis of the de {R}ham complex on the sphere.
\newblock {\em J. Reine Angew. Math.}, 398:130--143, 1989.

\bibitem[GP97]{groisser1997sharp}
David Groisser and Thomas~H. Parker.
\newblock Sharp decay estimates for {Y}ang-{M}ills fields.
\newblock {\em Comm. Anal. Geom.}, 5(3):439--474, 1997.

\bibitem[Rad93]{rade1993decay}
Johan Rade.
\newblock Decay estimates for yang-mills fields: two new proofs.
\newblock {\em Global analysis in modern mathematics (Orono, 1991, Waltham,
  1992), Publish or Perish, Houston}, pages 91--105, 1993.

\bibitem[Tau82]{taubes1982self}
Clifford~Henry Taubes.
\newblock Self-dual {Yang-Mills} connections on non-self-dual 4-manifolds.
\newblock {\em Journal of Differential Geometry}, 17(1):139--170, 1982.

\bibitem[Tau84]{taubes1984self}
Clifford~Henry Taubes.
\newblock Self-dual connections on {$4$}-manifolds with indefinite intersection
  matrix.
\newblock {\em J. Differential Geom.}, 19(2):517--560, 1984.

\bibitem[Uhl82a]{uhlenbeck1982connections}
Karen~K. Uhlenbeck.
\newblock Connections with {$L^{p}$} bounds on curvature.
\newblock {\em Comm. Math. Phys.}, 83(1):31--42, 1982.

\bibitem[Uhl82b]{uhlenbeck1982removable}
Karen~K. Uhlenbeck.
\newblock Removable singularities in {Y}ang-{M}ills fields.
\newblock {\em Comm. Math. Phys.}, 83(1):11--29, 1982.

\bibitem[Yin21]{yin2021}
Hao Yin.
\newblock {Direct minimizing method for Yang-Mills energy over {SO(3)} bundle},
  2021.

\end{thebibliography}
\end{document}